\theoremstyle{remark}
\definecolor{titlecol}{named}{BrickRed}
\definecolor{headcol}{named}{Violet}
\definecolor{seccol}{named}{Red}
\definecolor{sseccol}{named}{Bittersweet}
\definecolor{pbcol}{named}{Black}
\definecolor{sncol}{named}{Brown}
\definecolor{acol1}{named}{Red}
\definecolor{acol2}{named}{Apricot}
\def\p{\partial}
\def\R{\mathbb{R}}
\def\l{\lambda}
\def\cC{{\mathcal C}}
\def\cL{{\mathcal L}}
\def\cR{{\mathcal R}}
\def\cS{{\mathcal S}}
\theoremstyle{plain}
\newtheorem{thm}{Theorem}
\newtheorem{prop}{Proposition}[section]
\newtheorem{lemma}[prop]{Lemma}
\newtheorem{cor}[prop]{Corollary}
\newtheorem{rmk}[prop]{Remark}
\newtheorem{defn}[prop]{Definition}
\numberwithin{prop}{section}
\begin{document}

\title{The Gursky-Streets equation and its application to the $\sigma_k$ Yamabe problem}
\author{Weiyong He}
\address{Department of Mathematics, University of Oregon, Eugene, OR 97403.
}
\email{whe@uoregon.edu}
\author{Lu Xu}
\address{
Institute of Mathematics,
 Hunan University, Changsha 410082 China.}
 \email{xulu@hnu.edu.cn}
 \author{ Mingbo Zhang}
 \address{Department of Mathematics, University of Science and Technology of China, Hefei 230026
China.}
 \email{mbzhang@ustc.edu.cn}
 \thanks{The first author is supported in part by  an NSF grant, No. 1611797. The second author is supported by NSFC No. 11771132 and Hunan Science and Technology Project No. 2018JJ1004. The third author is supported by Youth Innovation Fund 2016 under grant No. WK0010460002. }

\thanks{2010 Mathematics Subject Classification: Primary 35B45; Secondary 35A02, 35J70, 35B50}

\maketitle

\begin{center}
{Abstract}
\end{center}

The Gursky-Streets equation are introduced as the geodesic equation of a metric structure in conformal geometry. This geometric structure has played a substantial role in the proof of uniqueness of $\sigma_2$ Yamabe problem in dimension four.
In this paper we solve the Gursky-Streets equations with uniform $C^{1, 1}$ estimates for $2k\leq n$.
An important new ingredient is to show the concavity of the operator which holds for all $k\leq n$. Our proof of the concavity heavily relies on Garding's theory of hyperbolic polynomials and results from the theory of real roots for (interlacing) polynomials.
Together with this concavity, we are able to solve the equation with the uniform $C^{1, 1}$ \emph{a priori estimates} for all the cases $n\geq 2k$. Moreover, we establish the uniqueness of the solution to the degenerate equations for the first time.

As an application, we prove that if $k\geq 3$ and $M^{2k}$ is conformally flat,  any solution solution of $\sigma_k$ Yamabe problem is conformal diffeomorphic to the round sphere $S^{2k}$.




\vskip 5pt
{{Keywords:} A priori estimate, Uniqueness, Degenerate equation, Maximum principle}



\vskip25pt
\section{Introduction}
\vskip5pt

Let $(M^n, g)$ be a compact Riemannian manifold of dimension $n$, $n\geq 3$, with a fixed conformal class $[g]$. Write $Ric$ for the Ricci tensor of $g$. The Schouten tensor  is defined as
\[
A:=\frac{1}{n-2}\left(Ric-\frac{1}{2(n-1)} Rg\right).
\]
The well-known \emph{$\sigma_k$-curvature} is the $k$-th elementary symmetric function of the eigenvalues of $g^{-1}A$. For $1\leq k\leq n$, we say $A\in\Gamma^+_k$ if $\sigma_j(g^{-1}A)>0$ for all $1\leq j\leq k$. Suppose $(M^n, g)$ satisfies that $A\in \Gamma^+_k$.
Let $g_u=e^{-2u}g$ be a conformal metric. We say $u$ is \emph{admissible}, if $A_u\in \Gamma^+_k$, where $A_u$ is the Schouton tensor of $g_u$.  Denote
\[
\cC^+_k=\left\{u| g_u\in [g], A_u\in \Gamma^+_k\right\}.
\]
Given $u_0, u_1\in \cC^+_k$, we study the following equations.
\begin{equation}\label{GS1}
u_{tt}\sigma_k(A_u)-\langle T_{k-1}(A_u),  \nabla u_t\otimes \nabla u_t\rangle=0.
\end{equation}
This equation is introduced by Gursky and Streets \cite{GS-2017}. When $k=2, n=4$, Gursky-Streets \cite{GS-2017} studied the  metric structure extensively  on the space of $\cC^{+}_2$  defined by
\begin{equation}\label{gsmetric}
\langle \psi, \phi\rangle_{u}=\int_M \phi \psi \sigma_2(g_u^{-1}A_u) dV_u.
\end{equation}
and explored its geometry. This geometric structure plays a substantial role in the proof of the uniqueness of $\sigma_2$-Yamabe problem on a compact four manifold.

In this article,  we aim to further study the general case of the Gursky-Streets equation. When $M$ is conformally flat and $n=2k\geq 6$, the geometric structure is parallel to the case $n=4=2k$, see \cite{GS-2017} and \cite{GS-2018}.

It is noteworthy that the geometry of Gursky-Streets' metric on the space of conformal metrics has a parallel theory with the geometry of the space of K\"ahler metrics. It was originally inspired by the Mabuchi-Semmes-Donaldson metric \cite{M1, M2, Semmes, D04} of K\"ahler geometry. In 1990s, Donaldson \cite{Donaldson97} set up a program  to study the geometry of the space of K\"ahler metrics and its various applications to the well-known problems in K\"ahler geometry, notably the existence and uniqueness of Calabi's extremal K\"ahler metrics \cite{C1} (constant scalar curvature metrics). Donaldson's program and related problems have great impacts on the K\"ahler geometry. A key ingredient is the geodesic equation, which can be written as a homogeneous complex Monge-Ampere equation by the work of Semmes \cite{Semmes} and Donaldson \cite{Donaldson97}.
X.X. Chen \cite{C2000} proved the existence of $C^{1, \bar 1}$ geodesic given two boundary datum and solved a conjecture of Donaldson. Chen's solution has played an important role in Donaldson's program.

Inspired by the theory of the space of K\"ahler metrics,  Gursky-Streets introduced the fully nonlinear degenerate elliptic equation \eqref{GS1}, arising as the \emph{geodesic} equation of the Gursky-Streets metric. Gursky and Streets proved {a remarkable geometric consequence: solutions of the  $\sigma_2$-Yamabe problem--whose existence follows from their positivity assumption and Chang, Gursky and Yang \cite{CGY02}-are unique, unless the manifold is conformally equivalent to the round sphere. This is a surprising departure from the classical Yamabe problem, where explicit examples of nonuniqueness are known (see e.g. \cite{P93,Br1,Br2,Sch89,V2000} ). Therefore, the geodesic equation has played an essential and important role in the uniqueness of the solutions of the  $\sigma_2$-Yamabe problem.

 In \cite{He18} the first author solved the following equation, for a smooth $f>0$ and any $n\geq 4$,
\begin{equation*}\label{GS001}
u_{tt}\sigma_2(A_u)-\langle T_{1}(A_u),  \nabla u_t\otimes \nabla u_t\rangle=f
\end{equation*}
with the uniform $C^{1, 1}$ estimates,  and then  gave a strong solution to the degenerate equation
\[
u_{tt}\sigma_2(A_u)-\langle T_{1}(A_u),  \nabla u_t\otimes \nabla u_t\rangle=0
\]
by letting $f=s\rightarrow 0$. Using the uniform $C^{1, 1}$ estimates, the first author was able to carry out the approach introduced in \cite{GS-2017} to give a \emph{simplified} and more \emph{straightforward} proof of the uniqueness of  the $\sigma_2$-Yamabe problem \cite{He18}.

 Recall that for $\Lambda=(\lambda_1,...,\lambda_n) \in \R^n$, the $k$-th elementary symmetric function is defined as
$$ \sigma_k(\Lambda)=\sum_{i_1<...<i_k} \lambda_{i_1}\cdots\lambda_{i_k}.$$
The operator $\sigma_k$ has many useful and important properties, and itself is very significant. The $\sigma_k$-Hessian and curvature equations are developed by Caffarealli-Nirenberg-Spruck in \cite{CNS3}. It appears in the classical $\sigma_k$-Yamabe problem in conformal geometry, see e.g. \cite{LL03}. It also appears in the celebrated Christoffel-Minkowski problem in convex geometry, see e.g. \cite{GM1}.

In this paper we consider all the cases $n\geq 2k$, which generalizes substantially \cite{He18} for $k=2$.
At first, we establish the \emph{a priori estimates} of the following equations  for $u: M\times [0, 1]\rightarrow \cC^+_k$
\begin{equation}\label{GS2}
u_{tt}\sigma_k(A_u)-\langle T_{k-1}(A_u),  \nabla u_t\otimes \nabla u_t\rangle=f,
\end{equation}
where $f>0$  is  a given smooth function.  Specifically, we prove the following main result.
\vskip 10pt
\begin{thm}\label{thmm}Let $2k\leq n$. Given $u_0, u_1\in \cC^+_k$ and a smooth function $f>0$, there exists a unique smooth solution $u$ of \eqref{GS2}
such that  $u(0, \cdot)=u_0$, $u(1, \cdot)=u_1$. Moreover, we have the following uniform $C^{1, 1}$ a priori estimates,
\begin{equation}
|u|_{C^0}+|u_t|\leq C=C(C_2, \sup f),\; \max \left\{|\nabla u|, u_{tt}, |\nabla^2 u|, |\nabla u_t|\right\}\leq C_3.
\end{equation}
\end{thm}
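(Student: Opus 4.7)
The plan is to prove Theorem \ref{thmm} via the method of continuity combined with uniform a priori estimates for \eqref{GS2} on the cylinder $M\times[0,1]$. I would deform from a convenient background configuration to the target along a smooth path in the parameters $(u_0,u_1,f)$; openness follows from the implicit function theorem once the linearization of \eqref{GS2} is shown to be invertible on the admissible class, while closedness reduces to uniform $C^{k,\alpha}$ estimates for every $k$. By Evans--Krylov followed by the Schauder bootstrap, the closedness problem reduces in turn to a uniform $C^{1,1}$ bound depending only on $\sup f$ and the $C^2$ norms of $u_0,u_1$, coupled with concavity of the underlying fully nonlinear operator along admissible directions.

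For the $C^0$ and $u_t$ bounds, I would use the linear interpolation of $u_0$ and $u_1$ as barriers, together with the convexity in $t$ of admissible solutions of \eqref{GS2} and the maximum principle on the cylinder. For the gradient estimate $|\nabla u|\leq C$, I would apply the maximum principle to an auxiliary function such as $|\nabla u|^2 e^{Au}$ for $A$ sufficiently large, exploiting the admissibility $A_u\in\Gamma^+_k$ and the positivity of the Newton tensor $T_{k-1}(A_u)$ to absorb the bad terms produced by differentiating the equation once. Boundary estimates for $u_t$ come from comparison with explicit sub/super-solutions built from $u_0$ and $u_1$ and propagated by interior maximum principle arguments.

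The central and most delicate step is the global second-order estimate bounding $u_{tt}$, $|\nabla u_t|$ and $|\nabla^2 u|$. Following the Gursky--Streets strategy, I would package these second derivatives into an $(n+1)\times(n+1)$ symmetric space-time matrix $W[u]$ whose spatial block is $A_u$ and whose time block carries $u_{tt}$ and $\nabla u_t$, so that \eqref{GS2} takes the form $F(W[u])=f$ for a suitable fully nonlinear operator $F$. Applying the maximum principle to an auxiliary quantity of the form $\log\lambda_{\max}(W[u])+\phi(|\nabla u|^2)+Au$, commuting derivatives through the equation, and invoking a concavity inequality for $F$ along admissible directions would control $\lambda_{\max}(W[u])$, hence all second derivatives. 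The essential new ingredient here, advertised in the abstract, is the proof that an appropriate power of $F$ is concave on the admissible cone, which proceeds via Garding's theory of hyperbolic polynomials together with interlacing and real-rootedness properties of the associated one-variable polynomials in $u_{tt}$. Once the concavity is in hand, Evans--Krylov upgrades $C^{1,1}$ to $C^{2,\alpha}$ up to the boundary, and Schauder theory then bootstraps to $C^\infty$.

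The main obstacle I expect is this concavity verification; the constraint $2k\leq n$ seems to enter precisely because only then does the admissible cone contain enough perturbations to run the Garding-hyperbolicity argument. A secondary but technically delicate point is that the boundary second-order estimates must depend only on $\sup f$ and the $C^2$ data of $u_0,u_1$, with no dependence on $\inf f$, so that the resulting bounds survive the degenerate limit $f\to 0$ used later in the paper. Uniqueness of the smooth solution for $f>0$ then follows from the comparison principle applied to the difference of two admissible solutions, combined with strict ellipticity of the linearization on the admissible cone.
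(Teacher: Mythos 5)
Your outline captures the right high-level scaffolding: a continuity path, openness from the implicit function theorem, closedness from a uniform $C^{1,1}$ bound plus concavity, Evans--Krylov and Schauder to bootstrap, and concavity of a power of $F_k$ established via Garding's theory of hyperbolic polynomials and interlacing, which is exactly what the paper does in Section 2. The $C^0$/$u_t$ barriers and the gradient bound via an exponential-weight maximum principle function are also consistent with what the paper does (it uses $w=|\nabla u|^2+e^{-\lambda u}+bt^2$ rather than a multiplicative weight, a minor variant).

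However, there are two substantive issues. First, you misidentify where the hypothesis $2k\leq n$ enters. The real-rootedness/interlacing argument and the concavity of $F_k^{1/(k+1)}$ in the paper hold for \emph{all} $1\leq k\leq n$, so $2k\leq n$ has nothing to do with "the admissible cone containing enough perturbations for the Garding argument." The constraint $2k\leq n$ enters twice downstream: (i) in the crucial quadratic-form inequality (Lemma \ref{Lem2}), which states that for $n\geq 2k$
\[
\Bigl\langle T_{k-1}(E_u),\ \tfrac{|\nabla\phi|^2}{2}g-\nabla\phi\otimes\nabla\phi\Bigr\rangle \ \geq\ \tfrac{(n-2k)(n-k+1)}{2n}\,\sigma_{k-1}(E_u)\,|\nabla\phi|^2,
\]
a Newton-inequality computation that is exactly what kills the bad quadratic terms coming from the nonlinearity of $A_u$ when you commute derivatives through the equation; and (ii) later in the degenerate-limit approximation, where one needs $\mathrm{diag}(-1,1,\dots,1)\in\Gamma^+_k$. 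Second, your proposed $C^2$ auxiliary function $\log\lambda_{\max}(W[u])+\phi(|\nabla u|^2)+Au$ is a genuinely different strategy from the paper's, and you give no reason to expect the maximum-principle computation to close, especially in the borderline case $n=2k$ where the coefficient in Lemma \ref{Lem2} vanishes. The paper instead bounds $u_{tt}$ first (applying the maximum principle to $u_{tt}+u_t^2$, using concavity of $G_k$ and Lemma \ref{Lem2}), then bounds $\Delta u$ separately. For $n\geq 2k+1$ the strict positivity in Lemma \ref{Lem2} does the job directly; for $n=2k$ it needs the additional trick of adding $\frac{K}{2}t^2+|\nabla u|^2$ with $K=\max |\nabla u_t|^2/u_{tt}$ (Lemma \ref{Pro4}, Proposition \ref{Pro5}) to manufacture the missing positivity, and then observes that $K$ is dominated by the resulting bound on $\sigma_1(A_u)$ to close the circle. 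Once $u_{tt}$ and $\sigma_1(A_u)$ are bounded, $|\nabla^2 u|$ and $|\nabla u_t|$ follow for free from $A_u\in\Gamma^+_k$ and $E_u\in\Gamma^+_1$; they are not estimated by a separate maximum principle as your $\lambda_{\max}$ device would require. Without identifying Lemma \ref{Lem2} (and its degeneration at $n=2k$) and the self-improving estimate in Proposition \ref{Pro5}, the core of the $C^2$ argument is missing. Finally, the boundary second-order estimate is not something one can wave at; the paper cites the specific Gursky--Streets boundary estimate (Theorem \ref{TGS}), which you would need to invoke or reprove.
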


\vskip 5pt

Taking $f=s$ for a positive constant $s$ and let  $s\rightarrow 0^{+}$,  we obtain the existence of solutions to degenerate equations. For the uniqueness results, we use maximum principle by constructing approximate solutions. At last, we conclude by a overall result as following.
\begin{thm}\label{thmm2}Let $2k\leq n$. Given $u_0, u_1\in \cC^+_k$, there exists a unique function $u\in C^{1, 1}\cap \overline{\cC^+_k}$ with $u(0, \cdot)=u_0$, $u(1, \cdot)=u_1$ such that
\[
u_{tt}\sigma_k(A_u)-\langle T_{k-1}(A_u),  \nabla u_t\otimes \nabla u_t\rangle=0
\]
in $L^\infty$ sense.
Moreover, we have the following uniform $C^{1, 1}$ control of $u$,
\begin{equation*}
|u|_{C^0}+|u_t|+|\nabla u|+ u_{tt}+|\nabla^2 u| +|\nabla u_t|\leq C_2.
\end{equation*}
\end{thm}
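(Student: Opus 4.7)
The plan is to deduce Theorem~\ref{thmm2} from Theorem~\ref{thmm} in two stages: a vanishing-viscosity argument for existence, and a barrier/maximum-principle argument for uniqueness.

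\emph{Existence.} For each $s\in(0,1]$ Theorem~\ref{thmm} supplies a smooth $u_s\in\cC^+_k$ solving \eqref{GS2} with right-hand side $f\equiv s$ and the prescribed boundary data. The $C^{1,1}$ estimate there depends only on $u_0,u_1$ and $\sup f$, hence is uniform in $s\in(0,1]$. Via Arzel\`a--Ascoli, pass to a subsequence $u_{s_j}\to u$ in $C^{1,\alpha}$ for every $\alpha<1$, with all second derivatives converging weakly-$\ast$ in $L^\infty$. The limit inherits the $C^{1,1}$ bound, realises the boundary conditions, and lies in $\overline{\cC^+_k}$ because $\Gamma^+_k$ is closed. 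To pass to the limit in
\[
u_{s,tt}\,\sigma_k(A_{u_s}) - \langle T_{k-1}(A_{u_s}), \nabla u_{s,t}\otimes \nabla u_{s,t}\rangle = s,
\]
note that $\sigma_k(A_{u_s})$ and $T_{k-1}(A_{u_s})$, being continuous functions of spatial derivatives up to order two, converge uniformly, so their products with the weakly-$\ast$ convergent factors $u_{s,tt}$ and $\nabla u_{s,t}$ converge to the product of limits; the right-hand side tends to $0$, giving the degenerate equation a.e.\ and thus in the $L^\infty$ sense.

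\emph{Uniqueness.} Let $u,v\in C^{1,1}\cap\overline{\cC^+_k}$ be two solutions with common boundary data, and let $w_s$ denote the smooth solutions produced above. It suffices to show $\|u-w_s\|_{C^0}\to 0$ as $s\to 0^+$, since the same argument applies to $v$. The idea is to sandwich $w_s$ between smooth upper and lower barriers built from $u$: mollify $u$ in space-time to $u^\epsilon$ and set $u^\epsilon_\pm := u^\epsilon \pm \delta(t-t^2)$. Writing $F$ for the nonlinear operator on the left of \eqref{GS2}, the fact that $\delta(t-t^2)$ is $x$-independent gives the identity
\[
F(u^\epsilon_\pm) = F(u^\epsilon) \mp 2\delta\,\sigma_k(A_{u^\epsilon}).
\]
Choosing $\delta$ slightly larger than $s/(2\inf\sigma_k(A_{u^\epsilon}))$ and letting $\epsilon\to 0$ much faster makes $u^\epsilon_-$ satisfy $F(u^\epsilon_-)\geq s$ and $u^\epsilon_+$ satisfy $F(u^\epsilon_+)\leq s$, while $u^\epsilon_\pm\to u = w_s$ on $\{t=0,1\}$. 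The comparison principle for the concave elliptic operator $F$---whose concavity on $\cC^+_k$ is the main analytic contribution of this paper---then squeezes $w_s$ between these barriers, yielding $|u-w_s|_{C^0}=O(\delta+\epsilon)\to 0$.

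\emph{Main obstacle.} The delicate point is the barrier construction in the degenerate regime. Mollification can push $A_u$ out of $\overline{\Gamma^+_k}$ at points where the admissibility of $u$ is marginal, so the perturbation $\delta(t-t^2)$ (or a more carefully tailored convex-in-$t$ correction) must simultaneously restore strict $k$-admissibility of $u^\epsilon_\pm$, impose the correct sign on $F(u^\epsilon_\pm)-s$, and shrink in $C^0$. Orchestrating the triple limit $\epsilon,\delta,s\to 0$ in the correct order so that all three requirements hold is the technical heart of the argument; it is only available because the uniform $C^{1,1}$ control from Theorem~\ref{thmm} gives quantitative ellipticity of $F$ along the approximating family.
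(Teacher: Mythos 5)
Your existence argument follows the paper's vanishing-viscosity outline, but the passage to the limit in the equation is not correct as stated. After extracting a $C^{1,\alpha}$-convergent subsequence, the second spatial derivatives of $u_{s_j}$ converge only weakly-$\ast$ in $L^\infty$; since $\sigma_k(A_{u_s})$ and $T_{k-1}(A_{u_s})$ are nonlinear functions of $\nabla^2 u_s$, they do not converge uniformly (and for $k\geq 2$ do not even converge weakly-$\ast$ to the corresponding expressions in $u$), so you cannot conclude that the limits of the products are the products of the limits. The paper sidesteps this by invoking the stability of viscosity solutions under locally uniform convergence (Crandall--Ishii--Lions, Lemma~6.1 and Remark~6.3), which requires only $C^{0}$ convergence of $u_{s}$; the $C^{1,1}$ regularity of the limit then upgrades the viscosity solution to a strong solution in $L^\infty$. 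As a side remark, the paper does not need Arzel\`a--Ascoli: comparison gives monotonicity $u^{s}\geq u^{\tilde s}$ for $s\leq \tilde s$, so the whole family converges.

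For uniqueness, you identify the right shape (approximate, perturb in $t$, compare), and your identity $F(u^\epsilon \pm \delta(t-t^2)) = F(u^\epsilon) \mp 2\delta\,\sigma_k(A_{u^\epsilon})$ is correct. But the step you explicitly flag as the ``technical heart'' --- keeping $A_{u^\epsilon}$ inside $\overline{\Gamma_k^+}$ after mollification --- is precisely where the paper's argument does the real work, and the additive perturbation $\pm\delta(t-t^2)$ cannot supply it, because it only touches $u_{tt}$ and leaves the spatial Schouten tensor unchanged. The paper's fix is multiplicative rather than additive: set $v=(1-\epsilon)u$ and observe that
\[
A_v=(1-\epsilon)A_u+\epsilon\Big(A+\tfrac{1-\epsilon}{2}|\nabla u|^2 g-(1-\epsilon)\nabla u\otimes\nabla u\Big).
\]
Because $2k\leq n$ implies $\operatorname{diag}(-1,1,\dots,1)\in\Gamma_k^+$, the matrix $\tfrac{1}{2}|\nabla u|^2 g-\nabla u\otimes\nabla u$ lies in $\overline{\Gamma_k^+}$, so the bracketed term is strictly in $\Gamma_k^+$ (since $A$ is), and hence $A_v$ is uniformly in the open cone, independent of how degenerate $A_u$ was. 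The concavity of $F_k^{1/(k+1)}$ (Theorem~\ref{thm2.1}) then gives $F_k(v)\geq 0$, adding $\epsilon t^2$ makes $F_k>0$ strictly, and only \emph{after} this is smoothing performed safely. This is where the hypothesis $2k\leq n$ and the concavity theorem enter the uniqueness proof; your sketch does not use either and therefore does not close the gap you yourself point out. The comparison step in the paper is also organized slightly differently --- two solutions are each approximated and then one approximant is further deformed along a convex combination with $t^2$ before applying the comparison principle --- which yields the quantitative $\|u_1-u_2\|_{C^0}\leq C\delta$ cleanly.
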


\vskip 5pt
Note that in the above theorems the following conventions of dependence of the constants are used. We use $C_1$ to denote a uniformly bounded positive constant depending only on $(M^n, g)$; $C_2$ to denote a uniformly bounded constant depending in addition on the boundary values $u_0, u_1$; and $C_3$ to denote a uniformly bounded constant depending in addition on $f$. An important feature  is that $C_3$ does not depend on $\inf f$, but rather on
\[
\{\sup f +f^{-1}\left(|\nabla f|+|f_t|+|f_{tt}|+|\Delta f|\right)\}.
\]
We use the notation $C=C(a_1, a_2, \cdots )$ to denote a uniform constant which depends on parameters $a_1, a_2, \cdots$.
The precise dependence of constants on the boundary value $u_0, u_1$ and $f$ can be easily traced in the proofs.

\vskip 5pt

It is worth mentioning that there are several different key points compared with  \cite{He18} when $k=2$ is assumed. To solve the Gursky-Streets equations with uniform $C^{1, 1}$ estimates for $2k\leq n$, our technique is to first show the \emph{concavity} of the operator. The approach in \cite{He18} for $k=2$ does not seem to work for the general case. The proof in this article  relies heavily on the Garding theory of hyperbolic polynomials and the results in the theory of real roots for (interlacing) polynomials.
This concavity certainly plays a substantial role in the \emph{a priori estimates}.

Comparing to the case  $k=2$, the general case is more involved technically and the computations are certainly more complicated.
We should emphasize that a main contribution is the interior $C^2$ estimates. In the case $n=2k$, this interior $C^2$ estimate is extremely delicate and technically very involved. Moreover, we establish the uniqueness of solution to the degenerate equation which are not obvious at all, even for $k=2$.
To achieve the uniqueness of the degenerate equation \[
u_{tt}\sigma_k(A_u)-\langle T_{k-1}(A_u),  \nabla u_t\otimes \nabla u_t\rangle=0,
\]
we use the notion of \emph{viscosity solution} together with our $C^{1, 1}$ estimates. The uniqueness does not follow directly from  general uniqueness results of viscosity solution in literature, at least to our knowledge. In additional, the nonlinear structure of $A_u$ and $2k\leq n$ play important roles in our arguments and indeed the proof is rather technical.

In the Appendix, we include briefly the geometric structure in $\cC_k^{+}$ and the Gursky-Streets equation as a geodesic equation when $M^{2k}$ is conformally flat. Moreover the functional introduced by Brendle and Viaclovsky \cite{BV04} is geodesically convex. As an application, we prove that if $M^{2k}$ is locally conformally flat and $\cC_k^+$ is not empty, then any solution of the $\sigma_k$ Yamabe problem is conformally diffeomorphic to the round sphere $S^{2k}$. This gives a totally different proof of a classical result of A.B. Li and Y. Y. Li \cite{LL03, LL05}.


\numberwithin{equation}{section}
\numberwithin{thm}{section}
\vskip25pt

\section{Concavity}
\vskip5pt
In this section we establish the convexity of the Gursky-Streets equations.
Denote the symmetric matrix $R=\left(
r_{ij}
\right)$
for $0\leq i, j\leq n$. We write $r=\left(r_{ij}\right)$ for $1\leq i, j\leq n$ and $x=\left(r_{01}, \cdots, r_{0n}\right)$.  For $r\in \Gamma_k^{+}$, define the operator
\[
F_{k}(R)=r_{00}\sigma_k(r)-\langle T_{k-1}(r), x\otimes x\rangle
\]
Denote the set $\cS$ to be the set of symmetric $(n+1)\times (n+1)$ matrices satisfying the following,
\[
\cS=\{R: r\in \Gamma_k^{+}, F_k(R)>0\}\]

\begin{thm}\label{thm2.1}
The set $\cS$ is a convex cone and  $F_{k}^{\frac{1}{k+1}}(R)$ (and hence $\log F_k(R))$ is concave on $\cS$.
\end{thm}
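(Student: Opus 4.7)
My plan is to prove the theorem via Gårding's theory of hyperbolic polynomials. Note that $F_k(R)$ is a homogeneous polynomial of degree $k+1$ on the space of symmetric $(n+1)\times(n+1)$ real matrices, and the strategy is to show it is Gårding-hyperbolic with respect to $e := I_{n+1}$, with associated Gårding cone $\Gamma$. Given this, I will verify $\cS \subset \Gamma$, so that $\cS = \Gamma \cap \{R : r \in \Gamma_k^+\}$ is the intersection of two convex sets (hence convex), and the concavity of $F_{k}^{\frac{1}{k+1}}(R)$ (and of $\log F_k(R)$) on $\cS$ will follow from Gårding's theorem applied on $\Gamma$.

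A useful preliminary identity is $F_k(R) = \sigma_{k+1}(R) - \sigma_{k+1}(R_{\hat 0})$, where $R_{\hat 0}$ is the principal $n\times n$ submatrix obtained by deleting row and column $0$; this follows by splitting $\sigma_{k+1}(R) = \sum_{|I|=k+1}\det R_I$ on whether $0\in I$ and applying the Schur complement to each $\det R_{\{0\}\cup J}$. Combined with the standard formula $\sigma_{j}(A - tI_m) = \frac{(-1)^j}{(m-j)!}\chi_A^{(m-j)}(t)$ expressing $\sigma_j$ of a shifted symmetric matrix in terms of derivatives of its characteristic polynomial, and with the Schur factorization $P(t) = (t-r_{00})Q(t) - S(t)$ where $P(t) := \det(tI-R)$, $Q(t) := \det(tI-R_{\hat 0})$, and $S(t) := x^T\mathrm{adj}(tI-R_{\hat 0})x$, a short Leibniz computation gives
\[
(n-k)!\,(-1)^{k+1}\,F_k(R - tI_{n+1}) \;=\; (t - r_{00})\,\tilde Q(t) \;-\; \tilde S(t),
\]
where $\tilde Q := Q^{(n-k)}$ has degree $k$ and $\tilde S := S^{(n-k)}$ has degree $k-1$.

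The crux is showing this right-hand side has $k+1$ real roots in $t$; since $F_k(I_{n+1}) = \binom{n}{k} > 0$, that suffices for hyperbolicity of $F_k$ with respect to $I_{n+1}$. I would realize the polynomial, up to a positive constant, as the characteristic polynomial of an ``arrow'' symmetric matrix $\bigl(\begin{smallmatrix} r_{00} & y^T\\ y & \mathrm{diag}(\mu_1,\ldots,\mu_k)\end{smallmatrix}\bigr)$, where $\mu_1 < \cdots < \mu_k$ are the roots of $\tilde Q$ (which are real by Rolle applied to the real-rooted $Q$) and $y_i^2 := \tilde S(\mu_i)/\tilde Q'(\mu_i)$ are chosen by Lagrange interpolation. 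The existence of real $y_i$ reduces to the sign identity $\tilde S(\mu_i)/\tilde Q'(\mu_i) \geq 0$, and this is the principal obstacle. I would attack it via the spectral representation $S(t) = \sum_\ell |c_\ell|^2 Q(t)/(t - \nu_\ell)$ (where $\nu_\ell$ are the eigenvalues of $R_{\hat 0}$ and the residues $|c_\ell|^2$ are manifestly non-negative), coupled with the preservation of Cauchy interlacing under iterated differentiation (from $P - s Q$ being real-rooted for every $s\in\R$, its $(n-k)$-th derivative is too, so $\tilde P := P^{(n-k)}$ and $\tilde Q$ interlace). Propagating the non-negativity of residues from $(S,Q)$ to $(\tilde S,\tilde Q)$ is precisely where the Hermite--Biehler/Obreschkoff-type results from interlacing polynomial theory invoked in the paper's announcement must intervene.

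Finally, the inclusion $\cS \subset \Gamma$ is established by an explicit path argument. For any $R\in\cS$, the defining inequality forces $r_{00} > x^T T_{k-1}(r)x/\sigma_k(r) \geq 0$. First, linearly shrinking the off-diagonal part along $R_\epsilon := \bigl(\begin{smallmatrix} r_{00} & \epsilon x^T\\ \epsilon x & r\end{smallmatrix}\bigr)$ yields $F_k(R_\epsilon) = r_{00}\sigma_k(r) - \epsilon^2 x^T T_{k-1}(r)x$, which is non-decreasing as $\epsilon$ shrinks from $1$ to $0$ and remains strictly positive. Then linearly interpolating the resulting block-diagonal matrix $\mathrm{diag}(r_{00},r)$ with $I_{n+1}$ keeps the $(0,0)$ entry positive and the lower block in $\Gamma_k^+$ (by convexity of $\Gamma_k^+$), so $F_k = ((1-s)r_{00} + s)\,\sigma_k((1-s)r + sI_n) > 0$ throughout. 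This exhibits a path in $\{F_k > 0\}$ from $R$ to $I_{n+1}$, placing $R \in \Gamma$; Gårding's theorem then finishes the proof.
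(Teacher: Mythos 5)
Your proposal follows the same overall strategy as the paper: reduce, via G\aa rding's theory of hyperbolic polynomials (H\"ormander, Prop.\ 2.1.31), to showing that $t\mapsto F_k(R+tI)$ is real rooted, verify that $\cS$ sits inside the corresponding G\aa rding cone, and then read off concavity of $F_k^{1/(k+1)}$. Where you genuinely diverge is in the mechanism for establishing real-rootedness. The paper diagonalizes $r$, writes $F_k(R+tI)=(r_{00}+t)\sigma_k(r+tI)-\sum_i q_{i,k}(t)x_i^2$, proves by Rolle that each $q_{i,k}$ interlaces $\sigma_k(r+tI)$, and closes by citing the Liu--Wang theorem on real-rootedness of linear combinations with interlacing coefficients. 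You instead use the identity $F_k(R)=\sigma_{k+1}(R)-\sigma_{k+1}(R_{\hat 0})$ plus the Schur factorization of characteristic polynomials to get $(n-k)!(-1)^{k+1}F_k(R-tI)=(t-r_{00})\tilde Q-\tilde S$ with $\tilde Q=Q^{(n-k)}$, $\tilde S=S^{(n-k)}$, and then aim to realize this as the characteristic polynomial of a symmetric arrowhead matrix --- a clean, constructive route to real roots. The step you correctly flag as the crux, $\tilde S(\mu_i)/\tilde Q'(\mu_i)\ge 0$, is exactly the analogue of the paper's interlacing lemma: your spectral decomposition gives nonnegative residues for $S/Q$, and since differentiation preserves interlacing (the same tool the paper uses, citing \cite{st}), those residues stay nonnegative for $\tilde S/\tilde Q$. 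So the outline is sound, but as written this key lemma is asserted rather than carried out; to have a complete proof you must actually propagate the nonnegativity through the $n-k$ derivatives. A small advantage of your version is the explicit two-stage path from $R$ to $I_{n+1}$ inside $\{F_k>0\}$ (first shrink the off-diagonal block, then the straight line to the identity), which makes $\cS\subset\Gamma$ precise; the paper merely asserts that $\cS$ is connected without exhibiting a path.
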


This proves the concavity of Gursky-Streets equations and it confirms a conjecture of the first named author \cite[Conjecture 4.3]{He18}.
When $k=n$, the operator $F_n=\det$ and the concavity is a well-known result. When $k=1$ the operator $F_1$ is the Donaldson operator and the concavity was proved by S. Donaldson and Chen-He.
The concavity of $\log F_k$ is equivalent to the convexity of $H_k(r, x)$ with
\[
H_k(r, x)=\frac{x T_{k-1}(r)x^T}{\sigma_k(r)}=x\left(\frac{\p \log \sigma_{k}(r)}{\p r_{ij}}\right)x^T
\]
It is well-known that $\log \sigma_k(r)$ is concave (over $\Gamma_k^+$). The convexity of $H_k$ (with $x$ fixed), implies that $\left(\frac{\p \log \sigma_{k}(r)}{\p r_{ij}}\right)$ as a function on $r$ is convex, which is about the positivity of ``third derivatives " of $\log\sigma_k$. In \cite{He18}, the first named author proved the convexity of $H_2$ by very involved computations. The method is elementary but consists of delicate and complicated computations. For $3\leq k\leq n-1$, the approach used in \cite{He18} seems to be too involved to prove the convexity of $H_k$.

In this paper we adopt an approach relying on Garding's theory of hyperbolic polynomials \cite{garding} to prove the concavity of  $F_k^{1/{k+1}}$ (and hence $\log F_k$). Using the theory of hyperbolic polynomials (see \cite[Proposition 2.1.31]{LH}), this is equivalent to proving the following

\begin{thm}\label{real01} For each $1\le k\le n $, the polynomial   $$p_{k+1}(t)=F_k(R+tI)=(r_{00}+t)\sigma_k(r+tI)-xT_{k-1}(r+tI)x^T$$ has only real roots for any real $n$-dimention vector $x$ and $n \times n$ real symmetric matrix $r$. Furthermore, if $\alpha_1\le \alpha_2\le \cdots\le  \alpha_{k+1}$ are the all real roots of $p_{k+1}$ , then they are separated by the $k$ real roots of $\sigma_k(r+tI)$.  We also have $\alpha_i\in [\min\limits_{1\le j\le n}\{-r_j\}, \max\limits_{1\le j\le n}\{-r_j\}]$, for $2\le i\le k-1$, where $(r_1, \cdots, r_n)$ are real eigenvalues of $r$.
\end{thm}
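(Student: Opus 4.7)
The plan is to diagonalize $r$ and reduce $p_{k+1}(t)$ to a one-variable polynomial in the shifted eigenvalues, then prove real-rootedness by combining interlacing theory for real-rooted polynomials with a partial-fraction/monotonicity argument. Choose an orthonormal basis in which $r = \operatorname{diag}(r_1,\ldots,r_n)$; then $T_{k-1}(r+tI)$ is diagonal with $i$-th entry $\sigma_{k-1;i}(r+tI)$, and
$$p_{k+1}(t) = (r_{00}+t)\,S_k(t) - \sum_{i=1}^n x_i^2\,S_{k-1;i}(t),$$
where $S_k(t):=\sigma_k(r_1+t,\ldots,r_n+t)$ and $S_{k-1;i}(t):=\sigma_{k-1}(\{r_j+t\}_{j\ne i})$. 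Introduce $Q(t) = \prod_{j=1}^n(t+r_j)$ and $Q_i(t)=Q(t)/(t+r_i)$; a direct application of Leibniz' rule yields $S_k=Q^{(n-k)}/(n-k)!$ and $S_{k-1;i}=Q_i^{(n-k)}/(n-k)!$. Since $Q$ has $n$ real roots $-r_j$, applying Rolle's theorem $n-k$ times gives that $S_k$ has $k$ real roots $\beta_1\le\cdots\le\beta_k$ lying in $[\min_j(-r_j),\max_j(-r_j)]$.

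The key step is to show that for each fixed $i$, the $k-1$ real roots of $S_{k-1;i}$ interlace those of $S_k$. Since $Q=(t+r_i)Q_i$ with $Q_i$ real-rooted, for all $\alpha,\beta\in\R$
$$\alpha Q(t)+\beta Q_i(t)=Q_i(t)\big[\alpha(t+r_i)+\beta\big]$$
has only real roots; by Rolle's theorem, so does $\alpha Q^{(n-k)}+\beta Q_i^{(n-k)}$ for every $\alpha,\beta$. The Hermite--Kakeya--Obreschkoff theorem then forces $S_k$ and $S_{k-1;i}$ to interlace. Assume first the generic situation where $S_k$ has simple roots $\beta_1<\cdots<\beta_k$ (reducing the general case to this by continuity of roots in the entries of $r$). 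Combining interlacing with the positive leading coefficients $\binom{n}{k}$ of $S_k$ and $\binom{n-1}{k-1}$ of $S_{k-1;i}$ gives
$$\operatorname{sign} S_k'(\beta_j)=\operatorname{sign} S_{k-1;i}(\beta_j)=(-1)^{k-j}\quad\text{for every }i,j,$$
so each residue $A_{ij}:=S_{k-1;i}(\beta_j)/S_k'(\beta_j)\ge 0$.

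To finish, expand in partial fractions:
$$\frac{p_{k+1}(t)}{S_k(t)} = (r_{00}+t) - \sum_{j=1}^k \frac{B_j}{t-\beta_j},\qquad B_j:=\sum_{i=1}^n x_i^2 A_{ij}\ge 0.$$
The right-hand side has derivative $1+\sum_j B_j/(t-\beta_j)^2>0$ wherever defined, so it is strictly increasing on each of the $k+1$ open intervals $(-\infty,\beta_1),(\beta_1,\beta_2),\ldots,(\beta_k,+\infty)$ and sweeps from $-\infty$ to $+\infty$ on each. It therefore has exactly one real zero per interval, giving $k+1$ real roots of $p_{k+1}$ which interlace $\beta_1,\ldots,\beta_k$; the interior roots $\alpha_2,\ldots,\alpha_k$ lie in $[\beta_1,\beta_k]\subset[\min_j(-r_j),\max_j(-r_j)]$.

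The main obstacle is the interlacing of $S_k$ and $S_{k-1;i}$: it cannot be read off directly from $\sigma_k$ and must be routed through the HKO criterion combined with preservation of real-rootedness under differentiation. The degenerate configurations (repeated roots of $S_k$, or $B_j=0$ arising from simultaneous vanishing of the $S_{k-1;i}$ at some $\beta_j$) are handled by a small perturbation of $r$ making the roots of $S_k$ simple, followed by passage to the limit using continuity of roots of $p_{k+1}$ in its coefficients.
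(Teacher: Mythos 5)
Your proof is correct, and its skeleton matches the paper's: diagonalize $r$, express both $\sigma_k(r+tI)$ and the Newton-tensor entries as $(n-k)$-th derivatives of $\pi_n$ and $\pi_n/(t+r_i)$, deduce interlacing, and then conclude real-rootedness of the linear combination. The differences are in how the last two steps are carried out. For the interlacing of $S_{k-1;i}$ with $S_k$, the paper simply observes that $\pi_n/(t+r_i)$ interlaces $\pi_n$ and quotes the standard fact that differentiation preserves interlacing; you instead route through the Hermite--Kakeya--Obreschkoff criterion by noting $\alpha\pi_n+\beta\pi_n/(t+r_i)=\bigl(\pi_n/(t+r_i)\bigr)\bigl(\alpha(t+r_i)+\beta\bigr)$ is real-rooted for all $\alpha,\beta$ and applying Rolle. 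Both are correct; the paper's route is a bit more direct. For the final combination step, the paper cites a ready-made theorem of Liu--Wang on real-rootedness of $a f+\sum b_i g_i$ when the $g_i$ interlace $f$ and the $b_i$ have the right sign at the zeros of $f$; you instead reprove exactly that statement in this special case via the partial fraction expansion $p_{k+1}/S_k=(r_{00}+t)-\sum_j B_j/(t-\beta_j)$ with $B_j\ge 0$ and a monotonicity count of sign changes. Your version is more self-contained and also makes the location of the interior roots $\alpha_2,\dots,\alpha_k\in[\beta_1,\beta_k]\subset[\min_j(-r_j),\max_j(-r_j)]$ completely transparent (in fact slightly sharper than the range $2\le i\le k-1$ stated in the theorem, which reads like a typo). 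Your treatment of degenerate configurations (repeated $\beta_j$ or $B_j=0$) by perturbing $r$ and passing to the limit is standard and sound, since the leading coefficient of $p_{k+1}$ is the fixed positive number $\binom{n}{k}$ so no degree drop occurs and the closed cone of real-rooted monic polynomials of fixed degree is preserved in the limit.
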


We need some facts about the convex cone $\Gamma^+_k$ and the Newton transformation $T_k(A)$. With the standard Euclidean metric, the $k$-th Newton transformation associated with a symmetric matrix $S$ (on $\R^n$) is given by
\[
T_k(S)=\sigma_k(S)I-\sigma_{k-1}(S) S+\cdots+(-1)^kS^k.
\]

\begin{prop}\label{gamma1}We have,
\begin{enumerate}
\item Each $\Gamma^+_k$ is an open convex cone.
\item If $A\in \Gamma^+_k$, then $T_{k-1}(A)$ is positive definite.
\item $\log \sigma_k$ and $\sigma_k^{1/k}$ are concave on $\Gamma^+_k$.
\end{enumerate}
\end{prop}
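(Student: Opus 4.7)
The plan is to derive all three statements from Garding's theory of hyperbolic polynomials, the same framework already invoked by the paper for the real-rootedness result in Theorem \ref{real01}. The starting observation is that $\sigma_k$ is a hyperbolic polynomial of degree $k$ on $\R^n$ in the direction $\mathbf{1}=(1,\dots,1)$: for every $\Lambda\in\R^n$ the polynomial $t\mapsto \sigma_k(\Lambda+t\mathbf{1})$ has only real roots, namely $-\lambda_i-c$ type shifts coming from the real eigenvalues of a symmetric matrix. Let $\Gamma(\sigma_k,\mathbf{1})$ denote the associated Garding cone, i.e.\ the connected component of $\{\sigma_k>0\}$ containing $\mathbf{1}$. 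Garding's theorem asserts that $\Gamma(\sigma_k,\mathbf{1})$ is an open convex cone and that $\sigma_k^{1/k}$ is concave on it. Thus once one shows the identification $\Gamma(\sigma_k,\mathbf{1})=\Gamma^+_k$, statements (1) and (3) are essentially free: openness follows from continuity of the $\sigma_j$; convexity is delivered by Garding; concavity of $\sigma_k^{1/k}$ is Garding's main inequality; and concavity of $\log\sigma_k = k\log(\sigma_k^{1/k})$ follows from composing the positive concave function $\sigma_k^{1/k}$ with the concave increasing function $\log$.

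For the Garding-cone identification I would proceed by induction on $k$. For the inclusion $\Gamma(\sigma_k,\mathbf{1})\subseteq\Gamma^+_k$ I use the nested-cone differentiation principle: if $p$ is hyperbolic in direction $a$ and $a\in\Gamma(p,a)$, then $\partial_a p$ is hyperbolic and $\Gamma(p,a)\subseteq\Gamma(\partial_a p, a)$. Applied iteratively to $\sigma_k$ in the direction $\mathbf{1}$, the derivatives are proportional to $\sigma_{k-1},\sigma_{k-2},\dots,\sigma_1$, so each is strictly positive on $\Gamma(\sigma_k,\mathbf{1})$. The reverse inclusion $\Gamma^+_k\subseteq\Gamma(\sigma_k,\mathbf{1})$ reduces to showing that the segment from $\mathbf{1}$ to any $\Lambda\in\Gamma^+_k$ stays inside $\{\sigma_k>0\}$; this is straightforward from convexity of the candidate set and connectedness, once (1) is in hand, so the two arguments dovetail.

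For (2), I first use that $T_{k-1}(S)$ is a polynomial in $S$, hence commutes with $S$ and is diagonal in any orthonormal eigenbasis of $S$, with eigenvalues exactly $\sigma_{k-1}(\Lambda|i)$ where $\Lambda|i$ denotes the vector obtained by deleting $\lambda_i$. So positive definiteness of $T_{k-1}(A)$ amounts to $\sigma_{k-1}(\Lambda|i)>0$ for every $i$, i.e.\ $\Lambda|i\in\Gamma^+_{k-1}$. One approach is polarization: since $e_i\in\overline{\Gamma^+_k}$, perturb to $e_i+\varepsilon\mathbf{1}\in\Gamma^+_k$ and apply Garding's strict polarization inequality to the mixed expression
\[
\partial_{e_i+\varepsilon\mathbf{1}}\sigma_k(\Lambda) \;=\; \sigma_{k-1}(\Lambda|i)+\varepsilon(n-k+1)\sigma_{k-1}(\Lambda)\;>\;0,
\]
and send $\varepsilon\to 0^+$ to get $\sigma_{k-1}(\Lambda|i)\ge 0$. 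The main obstacle is promoting this to \emph{strict} positivity, since Garding's inequality only gives non-strict positivity when $\mu$ is pushed to the boundary of its cone. The remedy, which I would carry out in detail, is a direct induction on $k$ using the recursion $\sigma_j(\Lambda)=\sigma_j(\Lambda|i)+\lambda_i\sigma_{j-1}(\Lambda|i)$ together with strict positivity of $\sigma_j(\Lambda)$ for $1\le j\le k$ to deduce $\Lambda|i\in\Gamma^+_{k-1}$ strictly. This is essentially the classical lemma of Caffarelli--Nirenberg--Spruck \cite{CNS3}, which I would cite.
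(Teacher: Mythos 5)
The paper states Proposition \ref{gamma1} without proof, treating all three items as classical facts (they go back to Garding and to Caffarelli--Nirenberg--Spruck), so there is no in-paper argument to compare against; your Garding-theoretic derivation is the standard route and is essentially correct. Items (1) and (3) do follow from Garding's theorem once $\Gamma^+_k$ is identified with the Garding cone $\Gamma(\sigma_k,\mathbf{1})$, and for (2) you correctly diagonalize $T_{k-1}(A)$ with eigenvalues $\sigma_{k-1}(\Lambda|i)$, correctly observe that polarization against the boundary direction $e_i$ only yields $\sigma_{k-1}(\Lambda|i)\ge 0$, and correctly defer the strict inequality to the CNS lemma that $\Lambda\in\Gamma^+_k$ implies $\Lambda|i\in\Gamma^+_{k-1}$.

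The one step you should repair is the reverse inclusion $\Gamma^+_k\subseteq\Gamma(\sigma_k,\mathbf{1})$: as written you justify it ``from convexity of the candidate set \dots once (1) is in hand,'' but convexity of $\Gamma^+_k$ is precisely part of (1), which you obtain only \emph{after} the identification with the Garding cone, so the argument is circular. The standard fix avoids segments altogether: for $\Lambda\in\Gamma^+_k$ consider the ray $\Lambda+t\mathbf{1}$, $t\ge 0$. Since $\frac{d}{dt}\sigma_j(\Lambda+t\mathbf{1})=(n-j+1)\sigma_{j-1}(\Lambda+t\mathbf{1})$, induction on $j$ starting from $\sigma_0\equiv 1$ shows each $\sigma_j(\Lambda+t\mathbf{1})$ is positive at $t=0$ and increasing, hence positive for all $t\ge 0$; for $t$ large the point lies in the component of $\mathbf{1}$, so the ray connects $\Lambda$ to $\mathbf{1}$ inside $\{\sigma_k>0\}$ and $\Lambda\in\Gamma(\sigma_k,\mathbf{1})$. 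With that substitution the proof is complete and consistent with the references the paper implicitly relies on.
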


We also need the following fact (see \cite{GS-2017}),
\begin{prop}\label{gamma2}Given $A$ a symmetric matrix and $X$ a vector, then
\begin{equation}
\begin{split}
&\langle T_k(A-X\otimes X), X\otimes X\rangle =\langle T_k(A), X\otimes X\rangle\\
&\sigma_k(A-X\otimes X)=\sigma_k(A)-\langle T_{k-1}(A), X\otimes X\rangle
\end{split}
\end{equation}
\end{prop}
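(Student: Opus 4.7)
The plan is to reduce both identities to polynomial identities in an auxiliary parameter $t$ and then match coefficients. The backbone is the standard adjugate expansion
\[
\mathrm{adj}(tI+A)=\sum_{j=0}^{n-1} t^{\,n-1-j}\, T_j(A),
\]
which follows from $(tI+A)\cdot\mathrm{adj}(tI+A)=\det(tI+A)\,I$ together with the Newton-type recursion $T_j(A)=\sigma_j(A)I-A\,T_{j-1}(A)$ and the expansion $\det(tI+A)=\sum_{j=0}^{n}\sigma_j(A)\,t^{\,n-j}$.

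For the $\sigma_k$ identity I would identify $X\otimes X$ with the rank-one matrix $XX^{T}$ and apply the matrix determinant lemma
\[
\det(tI+A-X\otimes X)=\det(tI+A)-X^{T}\mathrm{adj}(tI+A)\,X.
\]
Extracting the coefficient of $t^{\,n-k}$ on both sides, and rewriting $X^{T}\mathrm{adj}(tI+A)X=\sum_{j}t^{\,n-1-j}\langle T_j(A),X\otimes X\rangle$ via the adjugate expansion, immediately yields $\sigma_k(A-X\otimes X)=\sigma_k(A)-\langle T_{k-1}(A),X\otimes X\rangle$.

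For the $T_k$ identity I would combine the same adjugate expansion, now applied to $A-X\otimes X$, with the Sherman--Morrison formula. Setting $\alpha(t)=X^{T}(tI+A)^{-1}X$, a short algebraic simplification gives
\[
X^{T}(tI+A-X\otimes X)^{-1}X=\alpha+\frac{\alpha^{2}}{1-\alpha}=\frac{\alpha}{1-\alpha},
\]
while $\det(tI+A-X\otimes X)=(1-\alpha)\det(tI+A)$. The factor $1-\alpha$ cancels upon multiplication, producing
\[
X^{T}\mathrm{adj}(tI+A-X\otimes X)\,X=\alpha\det(tI+A)=X^{T}\mathrm{adj}(tI+A)\,X.
\]
Matching the coefficient of $t^{\,n-1-k}$ on both sides then gives $\langle T_k(A-X\otimes X),X\otimes X\rangle=\langle T_k(A),X\otimes X\rangle$.

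The one point that requires care is that Sherman--Morrison and the matrix determinant lemma ostensibly need $tI+A$ and $tI+A-X\otimes X$ to be invertible, which fails for at most finitely many $t$. This is harmless: both sides of each displayed equation are polynomials in $t$ and in the entries of $A$ and $X$, so an identity valid on the Zariski-open set where the inverses exist extends by continuity to all $t$, $A$, $X$. An alternative, fully self-contained route uses the generalized Kronecker delta representation of $\sigma_k$ and $T_k$; the total antisymmetry of the delta annihilates every term in the expansion containing two or more copies of $X\otimes X$, and both identities fall out in parallel.
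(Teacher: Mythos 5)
The paper itself supplies no proof of this proposition---it simply cites Gursky--Streets---so there is nothing in-text to compare your argument against; I will just assess its correctness. Your proof is correct. The adjugate expansion $\mathrm{adj}(tI+A)=\sum_{j=0}^{n-1}t^{\,n-1-j}T_j(A)$ does follow from $(tI+A)\mathrm{adj}(tI+A)=\det(tI+A)I$ and the recursion $T_j=\sigma_jI-AT_{j-1}$ (together with $T_n(A)=0$ by Cayley--Hamilton to close off the top coefficient), and once you have that, the matrix determinant lemma $\det(tI+A-X\otimes X)=\det(tI+A)-X^T\mathrm{adj}(tI+A)X$ gives the $\sigma_k$ identity by matching the $t^{\,n-k}$ coefficients. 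For the $T_k$ identity, your Sherman--Morrison computation is right: with $\alpha=X^T(tI+A)^{-1}X$ one gets $X^T(tI+A-X\otimes X)^{-1}X=\alpha/(1-\alpha)$ and $\det(tI+A-X\otimes X)=(1-\alpha)\det(tI+A)$, so the factor $1-\alpha$ cancels in the adjugate and matching $t^{\,n-1-k}$ coefficients yields $\langle T_k(A-X\otimes X),X\otimes X\rangle=\langle T_k(A),X\otimes X\rangle$ for $k\le n-1$, the case $k=n$ being trivial. You are also right that the use of inverses on the Zariski-open locus is harmless: both sides of each displayed identity are polynomials in $t$ and the entries of $A,X$, so the identity extends to all parameters. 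The alternative route you sketch via the generalized Kronecker delta (terms in the expansion containing two or more copies of the rank-one matrix $X\otimes X$ are annihilated by the antisymmetrization) is the other standard derivation and works equally well; it is worth noting that this is the conceptual reason both formulas hold, since $X\otimes X$ has rank one.
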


It is generally a hard problem to check whether a polynomial has all real roots. For example, when $k=2$, it is not easy to check that the cubic polynomial $F_2(R+tI)$ has all real roots by direct computations, even though the conditions on cubic polynomial having all real roots are well-known. Instead we exhibit the structure of real roots of $F_k(R+tI)$ and its relation with the polynomial $\sigma_k(r+tI)$. This allows us to use the theory of interlacing to assert that $F_k(R+tI)$ has all real roots.
Since $F_k(R)$ involves $r_{00}$, $x$ and only eigenvalues of $r$, we can diagonalize $r$ such that both of $r+tI$ and  $T_{k-1}(r+tI)$ are diagonal matrices. We denote the eigenvalues of $r$ by $(r_1, \dots, r_n)$ and write
$$\left\{
  \begin{array}{ll}
    \sigma_k(r+tI)=\sum\limits_{1\le i_1<i_2<\cdots<i_k\le n}(t+r_{i_1})(t+r_{i_2})\cdots(t+r_{i_k})  \\
      \\
    xT_{k-1}(r+tI)x^T=\sum\limits_{1\le i\le n}q_{i,k}(t)x_i^2
  \end{array}
\right.$$
where $q_{i,k}(t)\triangleq \sum\limits_{0\le j\le k-1}(-1)^j\sigma_{k-1-j}(r+tI)(t+r_i)^j$ and $\sigma_0(r+tI)$ is defined to be 1. Thus we have $$p_{k+1}(t)=(r_{00}+t)\sigma_k(r+tI)-\sum\limits_{1\le i\le n}q_{i,k}(t)x_i^2.$$


We introduce some notations for the convenience in the following  discussion.

\begin{enumerate}
 \item [(1)] {\bf{RZ}}$\triangleq$ the set of univariate polynomials with all real zeros.
  \item [(2)] $\pi_n(t)\triangleq (t+r_1)(t+r_2)\cdots(t+r_n)$;
  \item [(3)] $\pi_{i,n}(t)\triangleq \frac{\pi_n(t)}{t+r_i}$.
\end{enumerate}

Firstly we give two formulas about $\sigma_k(r+tI)$ and $q_{i,k}$, which will be used later.

\begin{prop}\label{pp}
 Let $\pi_n^{(i)}$ denote the $i$th derivative of $\pi_n$ with respect to $t$. Then
 \[
 \sigma_k(r+tI)=\dfrac{\pi_n^{(n-k)}}{(n-k)!}.
 \]
\end{prop}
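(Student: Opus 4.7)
The plan is to prove the identity by comparing two expansions of the shifted polynomial $\pi_n(t+s)$ viewed as a polynomial in the auxiliary variable $s$.

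First I would observe that, on one hand, since $\pi_n(\tau)=\prod_{i=1}^n(\tau+r_i)$, substituting $\tau=t+s$ and regarding the result as a polynomial in $s$ gives
\[
\pi_n(t+s)=\prod_{i=1}^n\bigl(s+(t+r_i)\bigr)=\sum_{k=0}^{n}\sigma_k(r+tI)\,s^{\,n-k},
\]
because the coefficient of $s^{n-k}$ in a product $\prod_i(s+a_i)$ is precisely the $k$-th elementary symmetric function of $a_1,\dots,a_n$, applied here to $a_i=t+r_i$.

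On the other hand, the Taylor expansion of the degree-$n$ polynomial $\pi_n$ at the point $t$ reads
\[
\pi_n(t+s)=\sum_{j=0}^{n}\frac{\pi_n^{(j)}(t)}{j!}\,s^{j}.
\]
Matching the coefficient of $s^{n-k}$ in the two displayed expressions yields $\sigma_k(r+tI)=\pi_n^{(n-k)}(t)/(n-k)!$, which is exactly the claim.

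There is no real obstacle; the only point to verify is the first identity, which is just the defining expansion of an elementary symmetric polynomial. If one prefers a calculation without introducing $s$, the same conclusion follows by a direct application of Leibniz's rule to $\pi_n^{(n-k)}(t)$: since each factor $t+r_i$ has vanishing derivatives of order $\geq 2$, the only surviving multi-indices in the Leibniz expansion correspond to choosing $n-k$ factors to differentiate once, leaving the remaining $k$ factors intact. Summing over such choices and absorbing the multinomial coefficient $(n-k)!$ gives $\pi_n^{(n-k)}(t)=(n-k)!\sum_{|T|=k}\prod_{i\in T}(t+r_i)=(n-k)!\,\sigma_k(r+tI)$. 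Either route produces the formula stated in the proposition.
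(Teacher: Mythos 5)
Your proof is correct. Your primary route --- expanding $\pi_n(t+s)$ in two ways, once via elementary symmetric polynomials and once via Taylor's formula, and matching coefficients of $s^{n-k}$ --- is a genuinely different (and arguably cleaner) argument from the paper's. The paper instead applies the general Leibniz rule directly to $\pi_n^{(n-k)}(t)$, expands the multinomial sum, and observes that the only surviving terms are those in which $n-k$ factors are differentiated once and the remaining $k$ are left alone; this is exactly the alternative you sketch at the end of your write-up. So you effectively supply both the paper's computation and a generating-function proof that avoids the explicit multinomial bookkeeping. The generating-function version has the advantage of making the combinatorial identity manifest without tracking which derivatives vanish; the paper's version is more self-contained for readers who prefer to see the differentiation carried out. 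Either is complete and rigorous.
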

\begin{proof}This is straightforward computation, \begin{eqnarray}
\dfrac{\pi_n^{(n-k)}}{(n-k)!}&=& \frac{1}{(n-k)!}\cdot\sum\limits_{i_1+\cdots+i_n=n-k}\frac{(n-k)!}{i_1!\cdots i_n!}(t+r_1)^{(i_1)}\cdots (t+r_n)^{(i_n)}\nonumber \\
&=&\sum\limits_{1\le i_1< \cdots<i_k\le n}(t+r_{i_1})(t+r_{i_2})\cdots (t+r_{i_k})\nonumber \\
&=&\sigma_k(r+tI)\nonumber
\end{eqnarray}
\end{proof}

\begin{prop}\label{pp2}
$q_{i,k}=\dfrac{\pi^{(n-k)}_{i,n}}{(n-k)!}$. The superscript $(n-k)$ still stands for the $(n-k)$-th derivative.
\end{prop}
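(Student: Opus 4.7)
The plan is to reduce Proposition~\ref{pp2} to Proposition~\ref{pp} applied to the reduced eigenvalue vector obtained by deleting $r_i$. Write $(r|i)=(r_1,\dots,\hat r_i,\dots,r_n)\in\mathbb R^{n-1}$ and observe that $\pi_{i,n}(t)=\prod_{l\ne i}(t+r_l)$ is precisely the polynomial $\pi_{n-1}$ built from $(r|i)$. Hence Proposition~\ref{pp}, applied in dimension $n-1$ with $k$ replaced by $k-1$, gives
\[
\sigma_{k-1}\bigl((r|i)+tI\bigr)=\frac{\pi_{i,n}^{(n-k)}}{(n-k)!}.
\]
Thus it is enough to establish the identity
\[
q_{i,k}(t)=\sigma_{k-1}\bigl((r|i)+tI\bigr),
\]
i.e.\ to recognize $q_{i,k}$ as the $(k-1)$-st elementary symmetric function of the shifted eigenvalues with the $i$-th one removed.

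To prove this identity, I would first record the basic splitting
\[
\sigma_m(r+tI)=\sigma_m\bigl((r|i)+tI\bigr)+(t+r_i)\,\sigma_{m-1}\bigl((r|i)+tI\bigr),
\]
which follows by sorting the $m$-element index subsets of $\{1,\dots,n\}$ according to whether they contain $i$. Solving for the term involving $\sigma_{m-1}((r|i)+tI)$ yields the recursion
\[
\sigma_m\bigl((r|i)+tI\bigr)=\sigma_m(r+tI)-(t+r_i)\,\sigma_{m-1}\bigl((r|i)+tI\bigr).
\]
Iterating this recursion starting from $m=k-1$ and terminating at $\sigma_0((r|i)+tI)=1$ gives a telescoping expansion
\[
\sigma_{k-1}\bigl((r|i)+tI\bigr)=\sum_{j=0}^{k-1}(-1)^j(t+r_i)^j\,\sigma_{k-1-j}(r+tI),
\]
which is exactly the definition of $q_{i,k}(t)$. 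Combining this with the displayed consequence of Proposition~\ref{pp} completes the proof.

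There is really no technical obstacle here; the entire argument is a combinatorial identity on elementary symmetric polynomials. The only point requiring any care is keeping track of the alternating signs and confirming that the induction on $m$ terminates cleanly at $m=0$, where the convention $\sigma_0\equiv 1$ matches both sides. In the remainder of the paper this lemma will serve as the bridge that lets us read both $\sigma_k(r+tI)$ and the coefficients $q_{i,k}(t)$ in the expansion of $xT_{k-1}(r+tI)x^T$ as successive derivatives of closely related polynomials $\pi_n$ and $\pi_{i,n}$, which is what will set up the interlacing arguments needed for Theorem~\ref{real01}.
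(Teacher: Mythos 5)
Your proof is correct and takes a genuinely different route from the paper's. The paper proves the recursion $q_{i,k}^{(1)}=(n-k+1)q_{i,k-1}$ by a direct but somewhat involved derivative computation, separately verifies the base case $q_{i,n}=\pi_{i,n}$ via a telescoping sum, and then iterates downward; this is a purely ``differential'' argument that never leaves the polynomial $q_{i,k}$. You instead identify $q_{i,k}$ \emph{algebraically} as $\sigma_{k-1}\bigl((r|i)+tI\bigr)$, i.e.\ as the $ii$-diagonal entry of $T_{k-1}(r+tI)$, using the standard splitting $\sigma_m(\lambda)=\sigma_m(\lambda|i)+\lambda_i\sigma_{m-1}(\lambda|i)$, and then simply quote Proposition~\ref{pp} in dimension $n-1$ with $k$ replaced by $k-1$, noting $\pi_{i,n}$ is exactly $\pi_{n-1}$ built from $(r|i)$. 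Your indexing works out: $(n-1)-(k-1)=n-k$. The payoff of your approach is that it is shorter, avoids derivative bookkeeping, and makes the structural meaning of $q_{i,k}$ transparent (it is itself an elementary symmetric polynomial of the reduced spectrum, which is what makes the interlacing with $\sigma_k(r+tI)$ so natural in Lemma~\ref{cct2}); the paper's approach, by contrast, exhibits the useful derivative relation $q_{i,k}^{(1)}=(n-k+1)q_{i,k-1}$ explicitly as a byproduct, though that relation is not used again. Both are valid; yours is arguably the cleaner proof of the stated identity.
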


\begin{proof}
We have the following,
\begin{eqnarray}
 q_{i,k}&=&\sum\limits_{0\le j\le k-1}(-1)^j\sigma_{k-1-j}(r+tI)(t+r_i)^j \nonumber\\
q^{(1)}_{i,k} &=&\sum\limits_{0\le j\le k-2}(-1)^j\frac{\pi_n^{(n-k+2+j)}}{(n-k+1+j)!}(t+r_i)^j\nonumber
+\sum\limits_{1\le j\le k-1}(-1)^j\frac{\pi_n^{(n-k+1+j)}}{(n-k+1+j)!}j(t+r_i)^{j-1}\nonumber \\
&=&\sum\limits_{0\le j\le k-2}(-1)^j(t+r_i)^{j}\pi_n^{(n-k+2+j)}\bigg(\frac{1}{(n-k+1+j)!}-\frac{j+1}{(n-k+2+j)!}\bigg)\nonumber\\
&=&\sum\limits_{0\le j\le k-2}(-1)^j(t+r_i)^{j}\frac{\pi_n^{(n-k+2+j)}}{(n-k+2+j)!}\big(n-k+2+j-(j+1)\big)\nonumber\\
&=&(n-k+1)\sum\limits_{0\le j\le k-2}(-1)^j(t+r_i)^{j}\sigma_{k-2-j}(r+tI)\nonumber\\
&=&(n-k+1)q_{i,k-1}\nonumber
\end{eqnarray}
while \begin{eqnarray}q_{i,n} &=&\sum\limits_{0\le j\le n-1}(-1)^j\sigma_{n-1-j}(r+tI)(t+r_i)^j\nonumber\\
&=&\sum\limits_{0\le j\le n-1}(-1)^j(t+r_i)^j\dfrac{\big(\pi_{i,n}\cdot (t+r_i)\big)^{(1+j)}}{(j+1)!}\nonumber \\
&=&\sum\limits_{0\le j\le n-1}(-1)^j\dfrac{(t+r_i)^j}{(j+1)!}\big(\pi_{i,n}^{(1+j)}(t+r_i)+(j+1)\pi_{i,n}^{(j)}\big)\nonumber \\
&=&\sum\limits_{1\le j\le n }\dfrac{-(-1)^j}{j!}(t+r_i)^j\pi_{i,n}^{(j)}+\sum\limits_{0\le j\le n-1}\dfrac{(-1)^j}{j!}(t+r_i)^j\pi_{i,n}^{(j)}\nonumber \\
&=&\pi_{i,n}-\dfrac{(-1)^n}{n!}(t+r_i)^n\pi_{i,n}^{(n)}\nonumber\\
&=&\pi_{i,n} .  \nonumber
\end{eqnarray}
The last equality holds because $\pi_{i,n}$ is a polynomial of degree $n-1$.  So we have
$$q_{i, k}=\dfrac{q^{(1)}_{i,k+1} }{n-k}=\dfrac{q^{(2)}_{i,k+2} }{(n-k)(n-k-1)}=\cdots=\dfrac{q^{(n-k)}_{i,n} }{(n-k)!}=\dfrac{\pi^{(n-k)}_{i,n} }{(n-k)!}.$$
\end{proof}

By the above-mentioned formulas for  $\sigma_k(r+tI)$ and $q_{i,k}$ , we have:
\begin{lemma}\label{cct1}
$\sigma_k(r+tI)$  and $q_{i,k}(t)$ are real rooted for each $1\le i, k \le n$.
\end{lemma}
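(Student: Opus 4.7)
The plan is to reduce the claim directly to Rolle's theorem via the two formulas just established in Propositions \ref{pp} and \ref{pp2}.

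First, I would observe that the base polynomials $\pi_n(t)=(t+r_1)(t+r_2)\cdots(t+r_n)$ and $\pi_{i,n}(t)=\pi_n(t)/(t+r_i)$ are manifestly in \textbf{RZ}: their real roots are exactly $\{-r_j\}_{1\le j\le n}$ and $\{-r_j\}_{j\ne i}$ respectively (counted with multiplicity, noting that the $r_j$ are the real eigenvalues of the real symmetric matrix $r$). Hence both are real rooted polynomials of degree $n$ and $n-1$.

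Next, I would invoke the standard consequence of Rolle's theorem: if $p(t)\in\mathbb{R}[t]$ has all real roots (counted with multiplicity), then $p'(t)$ also has all real roots, since between any two consecutive (distinct) real roots of $p$ there must lie a real root of $p'$, while a root of $p$ of multiplicity $m\ge 2$ forces a root of $p'$ of multiplicity $m-1$ at the same location. Iterating this statement $n-k$ times shows that $\pi_n^{(n-k)}$ and $\pi_{i,n}^{(n-k)}$ are both in \textbf{RZ}.

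Finally, by Proposition \ref{pp} we have
\[
\sigma_k(r+tI)=\frac{\pi_n^{(n-k)}(t)}{(n-k)!},
\]
and by Proposition \ref{pp2}
\[
q_{i,k}(t)=\frac{\pi_{i,n}^{(n-k)}(t)}{(n-k)!},
\]
so both are nonzero scalar multiples of real-rooted polynomials and are therefore themselves real rooted for every $1\le i,k\le n$. There is no real obstacle here; the lemma is essentially a bookkeeping consequence of the differentiation formulas and Rolle's theorem, and its real role is to set the stage for the interlacing arguments needed in Theorem \ref{real01}.
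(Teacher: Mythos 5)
Your proof is correct and follows essentially the same route as the paper's: identify $\pi_n$ and $\pi_{i,n}$ as real-rooted, apply Rolle's theorem iteratively to conclude the $(n-k)$-th derivatives are real-rooted, and invoke Propositions \ref{pp} and \ref{pp2}. Your explicit handling of root multiplicities under differentiation is a minor (and welcome) clarification but does not change the argument.
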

\begin{proof} Obviously, $\pi_n(t)=(t+r_1)(t+r_2)\cdots(t+r_n)$ has $n$ real roots $\{-r_1,-r_2,\cdots,-r_n\}$ by counting multiplicity. By Rolle's Mediate Value Theorem,  the $i$th derivative of $f$  has exactly $n-i$ real roots, for any $0\le i\le n$. Thus $\sigma_k(r+tI)$  has $n-(n-k)=k$ real roots by Proposition \ref{pp}. While it is  a polynomial of $t$ of degree  $k$. Therefore  $\sigma_k(r+tI)$ is real rooted.

Similarly, $\pi_{i,n}$ has  $n-1$ real roots $\{-r_1,-r_2,\cdots, -\hat{r_i}, -\cdots, -r_n\}$. Then $q_{i,k}$  has exactly $n-1-(n-k)=k-1$ real roots by Proposition \ref{pp2} and it is of degree $k-1$. Then all the roots are real.
\end{proof}

Assume $f,g$ are two polynomials in RZ and let $\{u_i\},\{v_j\}$ be all roots of $f, g$ in nonincreasing order respectively.  We say that $g$ interlaces $f$ , denoted by $g \preccurlyeq f$, if either deg$f=$deg $g = n$ and$$v_n\le u_n\le  v_{n-1} \le \cdots\le  v_2\le  u_2\le  v_1\le  u_1 ;$$  or deg $f  = $ deg $g + 1 = n$ and$$u_n\le  v_{n-1} \le \cdots\le  v_2\le  u_2\le  v_1\le  u_1 .$$
If all inequalities  are strict, then we say that $g$ strictly interlaces $f$, denoted by $g\prec f $. Interlacing describes the relative positions of the real roots of a pair of polynomials. It has shown its power in studying real rootedness of polynomials.
We have the following important relation between $\sigma_k(r+tI)$ and $q_{i,k}$ :

\begin{lemma}\label{cct2}
$q_{i,k}$ interlaces $\sigma_k(r+tI)$ for any $1\le i,k\le n$.
\end{lemma}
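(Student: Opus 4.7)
The plan is to reduce the interlacing claim to the classical preservation of real-rootedness and interlacing under differentiation. By Propositions \ref{pp} and \ref{pp2},
\[
\sigma_k(r+tI) = \frac{\pi_n^{(n-k)}}{(n-k)!}, \qquad q_{i,k} = \frac{\pi_{i,n}^{(n-k)}}{(n-k)!},
\]
and since scaling by the positive constant $1/(n-k)!$ does not affect roots, it suffices to prove $\pi_{i,n}^{(n-k)} \preccurlyeq \pi_n^{(n-k)}$.

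First I would establish the base case $\pi_{i,n} \preccurlyeq \pi_n$. This is immediate from the factorization $\pi_n(t) = (t+r_i)\pi_{i,n}(t)$: the multiset of roots of $\pi_{i,n}$ is obtained from $\{-r_1,\ldots,-r_n\}$ by deleting exactly one copy of $-r_i$, so after listing the $n$ roots of $\pi_n$ in nonincreasing order and discarding the entry equal to $-r_i$, the remaining $n-1$ values satisfy precisely the interlacing inequality required by the definition of $\preccurlyeq$ in the degree $(n, n-1)$ case.

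Next I would iterate the following differentiation step $n-k$ times. By the classical Hermite--Kakeya--Obreschkoff (HKO) theorem, real-rooted polynomials $f, g$ with $\deg f = n$ and $\deg g \in \{n-1, n\}$ satisfy $g \preccurlyeq f$ if and only if every real linear combination $\alpha f + \beta g$ is real-rooted. From the base case, $\alpha \pi_n + \beta \pi_{i,n}$ is real-rooted for every $\alpha, \beta \in \R$. By Rolle's theorem, the derivative $\alpha \pi_n' + \beta \pi_{i,n}'$ is then real-rooted for every $\alpha, \beta$, so applying HKO in the reverse direction gives $\pi_{i,n}' \preccurlyeq \pi_n'$. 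Repeating this step yields $\pi_{i,n}^{(n-k)} \preccurlyeq \pi_n^{(n-k)}$. The degree gap of one is maintained throughout, so at the final step the polynomials have degrees $k$ and $k-1$, matching those of $\sigma_k(r+tI)$ and $q_{i,k}$.

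The main obstacle is the differentiation step itself: a naive Rolle-based argument is inadequate because Rolle only locates each root of $f'$ (respectively $g'$) in an interval between consecutive roots of $f$ (respectively $g$), without directly comparing the two sets. HKO is the cleanest tool; alternative routes include the Hermite--Biehler theorem applied to the complex polynomial $\pi_n + i\,\pi_{i,n}$, or an induction on $n-k$ based on the recurrences in Propositions \ref{pp} and \ref{pp2} combined with a one-step interlacing-preserving lemma. Multiplicity issues arising from coincident eigenvalues $r_i = r_j$ produce coincident interlacing bounds, but these are absorbed by the weak formulation of $\preccurlyeq$ adopted in the paper.
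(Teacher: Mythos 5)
Your argument is correct and follows essentially the same route as the paper: reduce via Propositions \ref{pp} and \ref{pp2} to $\pi_{i,n}^{(n-k)} \preccurlyeq \pi_n^{(n-k)}$, observe the base-case interlacing $\pi_{i,n} \preccurlyeq \pi_n$ from the factorization $\pi_n = (t+r_i)\pi_{i,n}$, and invoke the fact that differentiation preserves interlacing. The only difference is that you supply a proof of the differentiation step via the Hermite--Kakeya--Obreschkoff criterion together with Rolle, whereas the paper simply cites this fact from the literature.
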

\begin{proof} By Propositions \ref{pp} and  \ref{pp2}, it's equivalent to prove $\pi^{(n-k)}_{i,n}(t)$ interlaces $\pi^{(n-k)}_{n}(t)$.  This holds definitely because for each $1\le i\le n$,  $\pi_{i,n}(t)$ interlaces  $\pi_{n}(t)$ and the differential operator $\frac{d}{dt}$ preserves interlacing  \cite[Theorem 1.47, page 38/779]{st}.
\end{proof}

Now we come to prove the real rootedness of $p_{k+1}$.  The main idea of the proof depends on the following theorem \cite[Theorem 2.3, page 5/19]{lili},  which is very useful to prove the real rootedness of some specific categories of real polynomials.

\begin{thm}\cite{lili}:  Let $F, f, g_1, \cdots, g_n$ be real polynomials satisfying the following conditions:\begin{enumerate}
\item $F(t)=a(t)f (t) + b_1 (t) g_1(t) + \cdots + b_n (t) g_n (t)$ , where $a, b_i$ are real  polynomials such that $\text{deg}\,F=degf$ or $\text{deg}f+1$.
\item  $f, g_i\in RZ$ and $g_i$ interlaces $f$ for each $i$.
\item $F$ and $g_i$ have leading coefficients of the same sign.
\item $b_i(z)\le 0$ for each $i$ and each zero $z$ of $f$ .
\end{enumerate}
Then $F\in RZ$ and $f$ interlaces $F$. In particular, if for each zero $z$ of $f$ , there is an index $i$ such
that $g_i$ strictly interlace $f$ and $b_i(z) < 0$, then $f$ strictly interlaces $F$.
\end{thm}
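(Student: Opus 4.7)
The plan is to prove the theorem by a sign-counting argument at the zeros of $f$, together with an asymptotic analysis, reducing the general hypothesis to a strict one by perturbation. After this reduction, the key observation is that the $a(t)f(t)$ term in the decomposition vanishes at every zero of $f$, so the sign of $F$ at such a point is governed entirely by $\sum_i b_i(z) g_i(z)$, whose sign pattern is forced by interlacing combined with condition (4).

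First I would normalize by multiplying any $g_i$ with negative leading coefficient by $-1$ (absorbing the sign into $b_i$); by hypothesis (3) all of $F, g_1, \ldots, g_n$ then have positive leading coefficients. Next I treat the strict version of the hypotheses: $f$ has simple real roots $z_1 > z_2 > \cdots > z_m$ with $m = \deg f$, each $g_i$ strictly interlaces $f$, and at every $z_j$ some index $i$ has $b_i(z_j) < 0$. Because strict interlacing places exactly one root of $g_i$ in each open interval $(z_{j+1}, z_j)$ and the leading coefficient of $g_i$ is positive, one checks that
\[
\operatorname{sign}\bigl(g_i(z_j)\bigr) = (-1)^{j-1}
\]
uniformly in $i$. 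Combining this with $b_i(z_j) \le 0$, the strict negativity of at least one $b_i(z_j)$, and the identity $F(z_j) = \sum_i b_i(z_j) g_i(z_j)$, I obtain $\operatorname{sign}\bigl(F(z_j)\bigr) = (-1)^j$, nonzero.

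The alternation $F(z_j) F(z_{j+1}) < 0$ then produces a real zero of $F$ in each of the $m-1$ intervals $(z_{j+1}, z_j)$, which already gives strict interlacing in the interior. Since $F(z_1) < 0$ and $F(t) \to +\infty$ as $t \to +\infty$, an additional real zero occurs in $(z_1, +\infty)$. If $\deg F = m+1$, then as $t \to -\infty$ we have $\operatorname{sign}(F(t)) = (-1)^{m+1}$, which is opposite to $\operatorname{sign}(F(z_m)) = (-1)^m$, giving one further real zero in $(-\infty, z_m)$. In either case, the number of real zeros matches $\deg F$, so $F \in \mathbf{RZ}$ and $f$ strictly interlaces $F$; this is exactly the strict addendum of the theorem.

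For the general statement I would pass to the limit. Replace $b_i$ by $b_i - \epsilon$ (to force strict negativity at the zeros of $f$) and perturb the coefficients of $f$ and $g_i$ slightly, within their leading-coefficient-preserving families, so that the roots of $f$ become simple and each interlacing $g_i \preccurlyeq f$ becomes strict — both are open conditions in coefficient space. Let $F_\epsilon$ be defined by the corresponding identity; the strict case already proved applies. As $\epsilon \to 0$, the coefficients of $F_\epsilon$ converge to those of $F$, and the standard Hurwitz-type fact that real-rootedness is preserved under coefficient limits for polynomial families of bounded degree delivers $F \in \mathbf{RZ}$ with weak interlacing by $f$. The principal obstacle is carrying out the perturbation so that the identity $F = af + \sum_i b_i g_i$ remains coherent, all four hypotheses are simultaneously upgraded to their strict forms, and the limiting interlacing is correctly read off at places where $f$ has multiple roots or where some $b_i(z)$ happens to vanish; no ideas beyond continuity of roots should be required.
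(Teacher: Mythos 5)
The paper offers no proof of this statement: it is quoted from Liu--Wang \cite{lili} and used as a black box, so there is nothing internal to compare against. Judged on its own, your strict case is correct and is the standard argument: after normalizing all leading coefficients to be positive, strict interlacing and simplicity of the roots $z_1>\cdots>z_m$ of $f$ give $\operatorname{sign} g_i(z_j)=(-1)^{j-1}$ for every $i$, hence $\operatorname{sign} F(z_j)=(-1)^{j}$, and the resulting sign alternation together with the behaviour of $F$ at $\pm\infty$ accounts for all $\deg F$ roots and yields strict interlacing by $f$.

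The gap is in your reduction of the general case to the strict one. The perturbed data must still satisfy hypotheses (1) and (3), i.e.\ $\deg F_\epsilon\in\{\deg f_\epsilon,\deg f_\epsilon+1\}$ with $F_\epsilon$ of the correct leading sign; but the theorem puts no bound on $\deg a$ or $\deg b_i$, so the identity $F=af+\sum_i b_ig_i$ may hold only because the high-degree parts of $af$ and of the $b_ig_i$ cancel. Perturbing $f$ and the $g_i$ independently (and replacing $b_i$ by $b_i-\epsilon$) destroys such cancellations, so $\deg F_\epsilon$ can jump well above $\deg f_\epsilon+1$, at which point the strict-case count no longer exhausts the roots of $F_\epsilon$; continuity of roots cannot repair this. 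A secondary issue is making every $g_i$ simultaneously \emph{strictly} interlace one common $f_\epsilon$ when $f$ has multiple roots, which interlacing forces each $g_i$ to share. The robust route --- and the one Liu--Wang take --- is to run your sign computation directly under the weak hypotheses: list the roots $z_1\ge\cdots\ge z_m$ of $f$ with multiplicity, show $(-1)^jF(z_j)\ge 0$ (at a root of $f$ of multiplicity $e$ each $g_i$, and hence $F$, vanishes to order at least $e-1$), and then invoke a counting lemma locating roots of $F$ in the closed intervals. Note that the weak case is exactly what this paper needs, since $\sigma_k(r+tI)$ can have repeated roots and $q_{i,k}$ only weakly interlaces it.
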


By setting
$$F(t)=p_{k+1}(t),\ \ a(t)=t+r_0,\ \  f(t)=\sigma_k(r+tI), \ \ g_i(t)=q_{i,k}(t), \ \  b_i=-x_i^2.$$  in the above theorem, we can see :
\begin{description}
  \item[(1)] deg($p_{k+1}$)$=k+1=$ deg$\bigg(\sigma_k(r+tI)\bigg)$+1
  \item[(2)] $\sigma_k(r+tI), q_{i,k}\in RZ$  and $q_{i,k}$ interlaces $\sigma_k(r+tI)$ for  each  $i$ and $k$ (Lemma \ref{cct1} and \ref{cct2}).
  \item[(3)] $p_{k+1}(t)$ and $q_{i,k}(t)$ have both positive leading coefficients. In fact, the leading coefficient of $p_{k+1}$ equals $\frac{n!}{(n-k)!^2}$ and that of $q_{i,k}$ equals $\frac{(n-1)!}{(n-k)!^2}$.
  \item[(4)]$-x_i^2\le 0$ for each $i$ and each zero of $\sigma_k(r+t I)$
\end{description}
\noindent Thus, all the four required conditions are all satisfied and then we get the real rootedness of $p_{k+1}(t)=F_k(R+tI)$.
Hence by {\cite[Proposition 2.1.31]{LH}}, $F_k^{1/(k+1)}$ is concave on the component of $I$ in the set of $\{R: F_k(R)\neq 0\}$, which is a connected convex cone. We denote this convex cone as $\tilde \cS$. Since $I$ is in  $\cS$ and $\cS$ is connected such that $F_k(R)>0$ on $\cS$,  hence $\cS$ is contained in $\tilde \cS$. The concavity of $F_k^{1/(k+1)}$ on $\tilde \cS$ (and hence on $\cS$) implies that $\cS$ is indeed a convex cone,
which proves Theorem \ref{real01}. Even though we do not really need this fact, it is a standard practice to show that  $\cS=\tilde \cS$.

\section{Existence and uniqueness}
In this section we establish the a priori estimates to solve the equation.
First we recall Gursky-Streets' equations and related notations briefly.
Let $(M^n, g)$ be a compact Riemannian manifold with the conformal class $[g]$, $n\geq 3$.
The metrics in $[g]$ can be parametrized by metrics of the form $g_u=e^{-2u}g$. The Ricci curvature is given by
\[
Ric(g_u)=Ric+(n-2)\left(\nabla^2 u+\nabla u\otimes \nabla u-\frac{1}{2}|\nabla u|^2g\right)+\left(\Delta u-\frac{n-2}{2}|\nabla u|^2\right) g
\]
and the scalar curvature is given by
\[
R(g_u)=e^{2u}\left(R+2(n-1)\left(\Delta u-\frac{n-2}{2}|\nabla u|^2\right)\right)
\]
Under the conformal change, the Schouten tensor is given by
\[
A_u=A(g_u)=A+\nabla^2 u+\nabla u\otimes \nabla u-\frac{1}{2}|\nabla u|^2 g.
\]

In  this section we derive the \emph{a priori estimates} to solve the equation.
Denote the operator
\begin{equation}\label{eqf1}
F_k(u_{tt}, A_u, \nabla u_t)=u_{tt}\sigma_k(A_u)-\langle T_{k-1}(A_u),  \nabla u_t\otimes \nabla u_t\rangle.
\end{equation}

Given $u_0, u_1\in C^\infty$ such that $A_{u_0}, A_{u_1}\in \Gamma^+_k$,  we assume that $u\in C^\infty$ such that $A_u\in \Gamma^+_k$, and solves the equation
\begin{equation}\label{eqf2}
F_k(u_{tt}, A_u, \nabla u_t)=f,
\end{equation}
for a positive function $f\in C^\infty$, with the boundary condition $u(0, x)=u_0(x)$, $u_1(1, x)=u_1(x).$
\vspace{3mm}

We need the following standard formulas of elementary symmetric functions, see e.g. \cite{Lieb}.
\begin{prop}\label{prop3.1}Let $\lambda =(\lambda_1,...,\lambda_n)\in \mathbb{R}^n$ and $k=0,1,...,n$, then
 \begin{align}\label{}
 &\frac{\partial \sigma_{k+1}(\lambda)}{\partial \lambda_i}=\sigma_{k}(\lambda|i), \quad \forall \, 1 \le i \le n,\\
 &\sigma_{k+1}(\lambda)=\sigma_{k+1}(\lambda|i)+\lambda_i\sigma_k(\lambda|i), \quad \forall \, 1 \le i \le n,\\
 &\sum_{i=1}^{n}\lambda_i \sigma_k(\lambda|i)=(k+1)\sigma_{k+1}(\lambda),\\
 &\sum_{i=1}^{n} \sigma_k(\lambda|i)=(n-k)\sigma_{k}(\lambda).
 \end{align}
 Here we denote by $\sigma_k(\lambda|i)$
  the symmetric function with $\lambda_i=0$.
\end{prop}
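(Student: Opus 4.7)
The plan is to prove all four identities directly from the combinatorial definition
\[
\sigma_{k+1}(\lambda)=\sum_{1\le i_1<\cdots<i_{k+1}\le n}\lambda_{i_1}\cdots\lambda_{i_{k+1}},
\]
with no need for any machinery beyond counting. I would treat them in the listed order, since the later identities are clean corollaries of the earlier ones.

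For (1), I would differentiate the defining sum term by term with respect to $\lambda_i$. Only the monomials containing the factor $\lambda_i$ survive, and each such monomial contributes its product of the remaining $k$ factors, i.e.\ a product $\lambda_{j_1}\cdots\lambda_{j_k}$ with all $j_\ell\neq i$. Summing these gives exactly $\sigma_k(\lambda|i)$. For (2), I would partition the index set $\{(i_1,\dots,i_{k+1})\}$ according to whether $i$ appears: the subsum over tuples not containing $i$ is $\sigma_{k+1}(\lambda|i)$ by definition, and the subsum over tuples containing $i$ factors as $\lambda_i\sigma_k(\lambda|i)$.

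For (3), the quickest route is a double-counting argument on the left side: in $\sum_i \lambda_i\sigma_k(\lambda|i)$, each $(k+1)$-fold monomial $\lambda_{i_1}\cdots\lambda_{i_{k+1}}$ of $\sigma_{k+1}(\lambda)$ arises exactly $k+1$ times, once for each choice of which of its indices plays the role of the leading $\lambda_i$. Equivalently, one can sum (2) over $i$ and observe that $\sum_i\sigma_{k+1}(\lambda|i)=(n-k-1)\sigma_{k+1}(\lambda)$ (which is itself (4) at level $k+1$), yielding the same conclusion. For (4), the analogous counting argument works: each $k$-fold monomial $\lambda_{i_1}\cdots\lambda_{i_k}$ of $\sigma_k(\lambda)$ appears in $\sigma_k(\lambda|i)$ for exactly those $i\notin\{i_1,\dots,i_k\}$, i.e.\ $n-k$ times.

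There is no real obstacle here — the identities are entirely formal and the only care required is bookkeeping of multiplicities in the counting arguments for (3) and (4). I would present (1) and (2) as one-line verifications, (3) by double counting (mentioning the alternative derivation via (2) and (4) in parentheses to make the logical order clear), and (4) by the same counting principle. A reference to a standard source such as Lieb's text, as the authors already do, would suffice in lieu of a full write-up.
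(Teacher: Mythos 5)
Your proof is correct; all four identities follow exactly as you describe from the combinatorial definition of $\sigma_{k+1}$, and the multiplicity counts in (3) and (4) are right. The paper itself offers no proof here, only the citation to Lieberman, so your elementary write-up is the natural thing to supply and matches the standard argument.
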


\begin{prop}\label{NI}(Newton's inequality). For any $k \in \{1,2,...,n-1\},$ and $\lambda \in \mathbb{R}^n$, we have
 \begin{align}\label{eqN1}
 (n-k+1)(k+1)\sigma_{k-1}(\lambda)\sigma_{k+1}(\lambda) \le k(n-k)\sigma_k^2(\lambda).
\end{align}
In particular, we have
 \begin{align}\label{eqN2}
 \sigma_{k-1}(\lambda)\sigma_{k+1}(\lambda) \le \sigma_k^2(\lambda).
\end{align}
\end{prop}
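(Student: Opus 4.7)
The plan is to construct a quadratic polynomial in $t$ whose three coefficients are, up to explicit positive combinatorial factors, exactly $\sigma_{k-1}(\lambda)$, $\sigma_k(\lambda)$, $\sigma_{k+1}(\lambda)$, and to verify that it has only real roots. Nonnegativity of its discriminant will then be precisely \eqref{eqN1}, and \eqref{eqN2} will follow at once.

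First I would consider the generating polynomial $\pi(t) = \prod_{i=1}^n(t+\lambda_i) = \sum_{j=0}^n \sigma_j(\lambda)\,t^{n-j}$, which has the $n$ real roots $-\lambda_1,\ldots,-\lambda_n$. By Rolle's theorem its $(n-k-1)$-th derivative is a polynomial of degree $k+1$ with only real roots, and a direct computation (or Proposition~\ref{pp}) yields
\[
\pi^{(n-k-1)}(t) \;=\; \sum_{j=0}^{k+1}\frac{(n-j)!}{(k+1-j)!}\,\sigma_j(\lambda)\,t^{k+1-j}.
\]
To isolate only the three coefficients $\sigma_{k-1},\sigma_k,\sigma_{k+1}$ I would pass to the reverse polynomial $R(t) := t^{k+1}\pi^{(n-k-1)}(1/t) = \sum_{j=0}^{k+1}\frac{(n-j)!}{(k+1-j)!}\sigma_j(\lambda)\,t^{j}$; if $\sigma_{k+1}(\lambda)=0$ then the reversal loses a degree, but in that case the left-hand side of \eqref{eqN1} vanishes and the inequality is trivial, so from here I assume $\sigma_{k+1}(\lambda)\neq 0$, in which case $R$ has degree exactly $k+1$ and inherits real-rootedness. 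Differentiating $R$ another $k-1$ times, Rolle's theorem produces a quadratic $Q$ with only real roots, and a short calculation identifies it as
\[
Q(t) \;=\; \frac{(k-1)!(n-k+1)!}{2}\,\sigma_{k-1}(\lambda) \,+\, k!(n-k)!\,\sigma_k(\lambda)\,t \,+\, \frac{(k+1)!(n-k-1)!}{2}\,\sigma_{k+1}(\lambda)\,t^2.
\]

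The nonnegativity of the discriminant of $Q$, after simplifying the ratios $(k-1)!(k+1)!/(k!)^2 = (k+1)/k$ and $(n-k-1)!(n-k+1)!/((n-k)!)^2 = (n-k+1)/(n-k)$, is precisely \eqref{eqN1}. For \eqref{eqN2}, observe that $k(n-k)/[(k+1)(n-k+1)] = 1 - (n+1)/[(k+1)(n-k+1)] \le 1$ for $1\le k\le n-1$, so \eqref{eqN2} follows from \eqref{eqN1} whenever $\sigma_{k-1}\sigma_{k+1}\ge 0$ and is automatic otherwise. I expect the only real obstacle to be the routine bookkeeping of factorials in producing the explicit form of $Q$ and the mild care needed in the degenerate case $\sigma_{k+1}(\lambda)=0$; the substantive content, the real-rootedness of $Q$, is inherited from $\pi$ by repeated applications of Rolle's theorem.
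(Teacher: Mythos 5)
Your proof is correct. The paper itself does not prove Proposition~\ref{NI}; it simply records Newton's inequality as a classical fact (standard references such as \cite{Lieb} are cited nearby for elementary symmetric function identities), so there is no argument in the paper to compare against. Your approach is the classical one going back to Maclaurin: pass from $\pi(t)=\prod_i(t+\lambda_i)$ to $\pi^{(n-k-1)}$, reverse coefficients, differentiate $k-1$ more times, and read the inequality off the discriminant of the resulting real-rooted quadratic. The factorial bookkeeping checks out, and your handling of the degenerate case $\sigma_{k+1}(\lambda)=0$ is fine, though it is worth noting it is not strictly necessary: the reversal $R$ still has all real roots even when its degree drops (it simply omits the reciprocals of the zero roots), and the quadratic degenerates to a linear or constant polynomial whose discriminant condition is vacuous, so the chain of implications survives without a case split. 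Likewise, your cautious caveat on the sign of $\sigma_{k-1}\sigma_{k+1}$ in the passage from \eqref{eqN1} to \eqref{eqN2} is unnecessary: multiplying $\sigma_k^2\ge 0$ by the factor $k(n-k)/[(k+1)(n-k+1)]\le 1$ and chaining with \eqref{eqN1} gives \eqref{eqN2} directly regardless of sign. These are stylistic remarks only; the argument as written is sound and complete.
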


\begin{prop}\label{MI}(MacLaurin inequality). For  $\lambda \in \Gamma^+_k$ and $ k \ge l \ge 1$, we have
 \begin{align}\label{eqM1}
 \left[ \frac{\sigma_{k}(\lambda)}{C_n^k} \right ]^{\frac{1}{k}}\le \left[\frac{\sigma_{l}(\lambda)}{C_n^l} \right ]^{\frac{1}{l}}.
\end{align}
\end{prop}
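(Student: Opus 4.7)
The plan is to deduce Maclaurin from Newton (Proposition \ref{NI}) via a standard log-concavity/Cesàro argument, using the positivity $\lambda\in\Gamma_k^+$ to ensure all quantities involved are strictly positive and all logarithms well-defined. Introduce the normalized elementary symmetric functions
\[
p_j(\lambda)=\frac{\sigma_j(\lambda)}{\binom{n}{j}},\qquad 0\le j\le k,
\]
with the convention $p_0=1$. A direct binomial computation shows
\[
\frac{\binom{n}{k}^2}{\binom{n}{k-1}\binom{n}{k+1}}=\frac{(k+1)(n-k+1)}{k(n-k)},
\]
so Newton's inequality \eqref{eqN1} is \emph{exactly} equivalent to
\[
p_{k-1}(\lambda)\,p_{k+1}(\lambda)\le p_k(\lambda)^2,\qquad 1\le k\le n-1.
\]
Because $\lambda\in\Gamma_k^+$ forces $p_j>0$ for $0\le j\le k$, taking logarithms shows that $j\mapsto \log p_j(\lambda)$ is a concave sequence on $\{0,1,\dots,k\}$.

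Now set $a_j=\log p_j-\log p_{j-1}$ for $1\le j\le k$; concavity of $\log p_j$ means the sequence $\{a_j\}$ is non-increasing. Since $\log p_0=0$, telescoping gives $\log p_j=a_1+\cdots+a_j$, so the desired Maclaurin inequality
\[
p_k^{1/k}\le p_l^{1/l}\quad\Longleftrightarrow\quad \frac{\log p_k}{k}\le\frac{\log p_l}{l},\qquad k\ge l\ge 1,
\]
is the statement that the running averages $\bar a_j:=\frac{1}{j}\sum_{i=1}^j a_i$ form a non-increasing sequence. This is the elementary Cesàro fact: from
\[
\bar a_{j+1}=\frac{j}{j+1}\bar a_j+\frac{1}{j+1}a_{j+1}
\]
together with $a_{j+1}\le a_i$ for all $1\le i\le j$ (hence $a_{j+1}\le \bar a_j$), one concludes $\bar a_{j+1}\le \bar a_j$, finishing the proof.

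There is really no serious obstacle here: the whole argument is classical and the only place one needs to be slightly careful is that the cone condition $\lambda\in\Gamma_k^+$ is used exactly to justify the passage to logarithms (so that log-concavity is equivalent to Newton and the Cesàro manipulation makes sense). If one wanted a self-contained statement without this positivity, one could run the same argument after replacing $\sigma_j$ by $\sigma_j+\varepsilon^{k-j}\binom{n}{j}$ and letting $\varepsilon\to 0^+$, but that refinement is not needed for the uses of Proposition \ref{MI} in the sequel.
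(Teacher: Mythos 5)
The paper states this proposition, together with Newton's inequality and the generalized Newton--MacLaurin inequality, as standard facts with a citation to Lieberman and gives no proof, so there is no in-paper argument to compare against. Your derivation is correct and is the classical textbook proof: normalize to $p_j=\sigma_j/\binom{n}{j}$, observe that the binomial coefficients are chosen precisely so that Newton's inequality \eqref{eqN1} becomes the log-concavity relation $p_{j-1}p_{j+1}\le p_j^2$, use $\lambda\in\Gamma_k^+$ to guarantee $p_j>0$ for $0\le j\le k$ so that logarithms make sense, and then convert log-concavity of $\{\log p_j\}_{j=0}^k$ into monotonicity of the Cesàro means $\tfrac{1}{j}\log p_j$, which is exactly \eqref{eqM1}. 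One small point worth making explicit: you only invoke Newton for indices $j\le k-1$, so the largest $p$ appearing is $p_k$, and the cone condition supplies all the needed positivity without any hypothesis on $\sigma_{k+1}$. The concluding $\varepsilon$-perturbation remark is harmless but unnecessary here, since the positivity you need is already built into the hypothesis $\lambda\in\Gamma_k^+$.
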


\begin{prop}\label{NMI}(Generalized Newton-MacLaurin inequality).
For $\lambda \in \Gamma_k^+$ and $k > l \geq 0$, $ r > s \geq 0$, $k \geq r$, $l \geq s$, we have
\begin{align} \label{1.2.6}
\Bigg[\frac{{\sigma _k (\lambda )}/{C_n^k }}{{\sigma _l (\lambda )}/{C_n^l }}\Bigg]^{\frac{1}{k-l}}
\le \Bigg[\frac{{\sigma _r (\lambda )}/{C_n^r }}{{\sigma _s (\lambda )}/{C_n^s }}\Bigg]^{\frac{1}{r-s}},
\end{align}
and the equality holds if and only if $\lambda_1 = \lambda_2 = \cdots =\lambda_n >0$.
\end{prop}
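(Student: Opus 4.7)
The plan is to reduce the generalized inequality to an elementary averaging statement for a non-increasing sequence, with Newton's inequality \eqref{eqN1} providing the monotonicity. Write $S_j := \sigma_j(\lambda)/C_n^j$ and $a_j := S_j/S_{j-1}$ for $1\le j\le k$; since $\lambda \in \Gamma_k^+$, every $\sigma_j(\lambda)$ with $j\le k$ is strictly positive, so the $S_j$'s and $a_j$'s are well defined and positive.

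The first step is to translate \eqref{eqN1} into the single clean assertion $S_{j-1}S_{j+1} \le S_j^2$, which is nothing other than $a_{j+1}\le a_j$. Thus the sequence $a_1\ge a_2\ge \cdots \ge a_k>0$ is non-increasing, and the classical equality case of Newton's inequality tells us that $a_{j+1}=a_j$ at some $j$ forces $\lambda_1=\cdots=\lambda_n$. A direct telescoping gives
\[
\left(\frac{S_k}{S_l}\right)^{1/(k-l)} = \left(\prod_{j=l+1}^{k}a_j\right)^{1/(k-l)}, \qquad \left(\frac{S_r}{S_s}\right)^{1/(r-s)} = \left(\prod_{j=s+1}^{r}a_j\right)^{1/(r-s)},
\]
so, taking logarithms and setting $b_j := \log a_j$, the inequality \eqref{1.2.6} reduces to
\[
\frac{1}{k-l}\sum_{j=l+1}^{k}b_j \ \le\ \frac{1}{r-s}\sum_{j=s+1}^{r}b_j,
\]
where $b_1\ge b_2\ge \cdots \ge b_k$.

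The second step is to establish this averaging inequality in two elementary moves that use only monotonicity of $\{b_j\}$. \emph{Shrinking to the right:} since $k\ge r$, the tail $b_{r+1},\dots,b_k$ consists of values each $\le b_r$, hence each $\le$ the average of $b_{l+1},\dots,b_r$, so removing the tail cannot decrease the average, yielding
\[
\frac{1}{k-l}\sum_{j=l+1}^{k}b_j \ \le\ \frac{1}{r-l}\sum_{j=l+1}^{r}b_j .
\]
\emph{Extending to the left:} since $l\ge s$, the prefix $b_{s+1},\dots,b_l$ consists of values each $\ge b_{l+1}$, hence each $\ge$ the average of $b_{l+1},\dots,b_r$, so prepending this prefix cannot decrease the average, yielding
\[
\frac{1}{r-l}\sum_{j=l+1}^{r}b_j \ \le\ \frac{1}{r-s}\sum_{j=s+1}^{r}b_j .
\]
Chaining these two inequalities gives the reduced statement, and hence \eqref{1.2.6}.

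For the equality case, tracing back through the chain, equality forces $b_{s+1}=\cdots=b_k$, equivalently $a_{s+1}=\cdots=a_k$; by Step 1 this is equality in Newton's inequality at every index in this range, which forces $\lambda_1=\cdots=\lambda_n$, and combined with $\lambda\in\Gamma_k^+$ the common value must be strictly positive. There is no genuine obstacle here: Newton's inequality does all the heavy lifting, and the rest is bookkeeping about averages of a monotone sequence.
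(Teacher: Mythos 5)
The paper states this proposition without proof, presenting it (together with Propositions~\ref{NI} and~\ref{MI}) as standard facts about elementary symmetric functions, so there is no paper proof to compare against. Your derivation from Newton's inequality is the classical one: the identity $S_{j-1}S_{j+1}\le S_j^2$ with $S_j=\sigma_j/C_n^j$ is exactly \eqref{eqN1} after clearing binomials, so $a_j=S_j/S_{j-1}$ is non-increasing, the telescoping to geometric means is right, and the reduction to comparing arithmetic means of a non-increasing sequence $b_j=\log a_j$ over two windows is clean and correct.

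There is, however, one small but genuine gap in the two-step averaging argument: the intermediate quantity $\frac{1}{r-l}\sum_{j=l+1}^{r}b_j$ is only meaningful when $l<r$, and the stated hypotheses do not force this (for instance $k=3,\ l=2,\ r=1,\ s=0$ satisfies $k>l\ge 0$, $r>s\ge 0$, $k\ge r$, $l\ge s$). When $l\ge r$ the two windows $[l+1,k]$ and $[s+1,r]$ are disjoint, with the first lying strictly to the right of the second, and the desired inequality is immediate from monotonicity alone: every $b_j$ with $j\ge l+1$ satisfies $b_j\le b_{l+1}\le b_r$, and every $b_{j'}$ with $j'\le r$ satisfies $b_{j'}\ge b_r$, so
\[
\frac{1}{k-l}\sum_{j=l+1}^{k}b_j\ \le\ b_r\ \le\ \frac{1}{r-s}\sum_{j=s+1}^{r}b_j,
\]
and equality again forces all the $b_j$ in play to coincide. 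You should add this one-line case (or simply note at the outset that the overlapping case $l<r$ and the disjoint case $l\ge r$ are treated separately). With that, the argument is complete; the remaining wrinkle that the degenerate choice $k=r$, $l=s$ makes the two sides of \eqref{1.2.6} literally identical is a peculiarity of the proposition's stated hypotheses, not of your proof.
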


We set
\begin{equation}
 E_u=u_{tt}A_u-\nabla u_t\otimes \nabla u_t.
\end{equation}
Then by Proposition~\ref{gamma2}, the operator in equation \eqref{eqf1} is transformed into
\begin{equation}
F_k(u_{tt}, A_u, \nabla u_t)= u_{tt}^{1-k}\sigma_k(E_u).
\end{equation}

\begin{prop}
\label{Lem1}
 Suppose $A_u \in \Gamma_k^+$ and $u_{tt} >0$, $\sigma_k(E_u)>0$, then $E_u \in \Gamma^+_k$. In particular, we have
 \begin{align}\label{}
 \sigma_{i-1}(E_u) \ge f_i \sigma_i(A_u)^{-1}\sigma_{i-1}(A_u)u_{tt}^{i-2},  \quad 2 \le i \le \ k
\end{align}
with $f_i = u_{tt}^{1-i}\sigma_i(E_u)$, $f_k=f$.
\end{prop}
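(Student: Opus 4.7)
The plan is to first recast $\sigma_j(E_u)$ in terms of $A_u$ and $\nabla u_t$ via Proposition \ref{gamma2}, then establish the stated inequality as a purely algebraic fact, and finally deduce $E_u \in \Gamma_k^+$ by a downward induction. For the first step, writing $E_u = u_{tt}\bigl(A_u - u_{tt}^{-1}\nabla u_t \otimes \nabla u_t\bigr)$ and combining the $j$-homogeneity of $\sigma_j$ with Proposition \ref{gamma2} yields
\[
\sigma_j(E_u) \;=\; u_{tt}^{\,j}\sigma_j(A_u) \;-\; u_{tt}^{\,j-1}\langle T_{j-1}(A_u),\,\nabla u_t \otimes \nabla u_t\rangle,
\]
so that $f_j := u_{tt}^{1-j}\sigma_j(E_u)$ is precisely the operator $F_j$ in \eqref{eqf1} with $k$ replaced by $j$. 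In particular, $f = f_k > 0$ is equivalent to $\sigma_k(E_u) > 0$.

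Next, I would prove the inequality as a purely algebraic statement (independently of $E_u \in \Gamma_k^+$). Substituting the above identity into $\sigma_{i-1}(E_u) \geq f_i\sigma_i(A_u)^{-1}\sigma_{i-1}(A_u)u_{tt}^{i-2}$ and clearing the positive factor $u_{tt}^{i-1}$, the claim reduces to
\[
\sigma_{i-1}(A_u)\langle T_{i-1}(A_u),\,\nabla u_t \otimes \nabla u_t\rangle \;\geq\; \sigma_i(A_u)\langle T_{i-2}(A_u),\,\nabla u_t \otimes \nabla u_t\rangle.
\]
Diagonalizing $A_u$ at the point with eigenvalues $\lambda = (\lambda_1,\ldots,\lambda_n)$ and setting $y_j = \partial_j u_t$ in this frame, it suffices to verify the pointwise inequality $\sigma_{i-1}(\lambda)\sigma_{i-1}(\lambda|j) \geq \sigma_i(\lambda)\sigma_{i-2}(\lambda|j)$ for each $j$. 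Applying the two recursions $\sigma_i(\lambda) = \sigma_i(\lambda|j) + \lambda_j \sigma_{i-1}(\lambda|j)$ and $\sigma_{i-1}(\lambda) = \sigma_{i-1}(\lambda|j) + \lambda_j\sigma_{i-2}(\lambda|j)$ from Proposition \ref{prop3.1}, the $\lambda_j$-cross terms cancel and the difference telescopes to $\sigma_{i-1}(\lambda|j)^2 - \sigma_{i-2}(\lambda|j)\sigma_i(\lambda|j)$, which is non-negative by Newton's inequality \eqref{eqN2} of Proposition \ref{NI} applied to the real $(n-1)$-tuple $(\lambda|j)$.

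Finally, the inclusion $E_u \in \Gamma_k^+$ follows by downward induction on $i$ starting at $i = k$. The base case $\sigma_k(E_u) > 0$ is given. Assuming $\sigma_i(E_u) > 0$, the inequality just established yields
\[
\sigma_{i-1}(E_u) \;\geq\; \frac{\sigma_i(E_u)\,\sigma_{i-1}(A_u)}{u_{tt}\,\sigma_i(A_u)} \;>\; 0,
\]
since $A_u \in \Gamma_k^+$ forces $\sigma_{i-1}(A_u), \sigma_i(A_u) > 0$ and $u_{tt} > 0$ by hypothesis; iterating down to $i = 2$ delivers positivity of $\sigma_j(E_u)$ for all $1 \leq j \leq k$. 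The most delicate point is that the reduction in the previous paragraph uses Newton's inequality in \emph{unconditional} form, since the vector $(\lambda|j)$ need not lie in any positive cone; this is precisely the content of \eqref{eqN2}, and the remaining steps are routine bookkeeping.
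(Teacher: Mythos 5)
Your proposal is correct and follows essentially the same route as the paper: reduce the inequality to the positive semidefiniteness of $\sigma_{i-1}(A_u)T_{i-1}(A_u)-\sigma_i(A_u)T_{i-2}(A_u)$, diagonalize $A_u$, telescope via the recursion of Proposition~\ref{prop3.1} to reach $\sigma_{i-1}(\lambda|j)^2-\sigma_i(\lambda|j)\sigma_{i-2}(\lambda|j)\geq 0$, invoke Newton's inequality \eqref{eqN2} unconditionally on the $(n-1)$-tuple $(\lambda|j)$, and then run the downward induction from $\sigma_k(E_u)>0$. One small slip: when you substitute and simplify, the $u_{tt}^{\,i-1}\sigma_{i-1}(A_u)$ terms cancel on the two sides and you then divide by the factor $u_{tt}^{\,i-2}$ (not $u_{tt}^{\,i-1}$); the algebra is otherwise exactly right.
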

\begin{proof}
Since $\sigma_k(E_u)>0$, it is  sufficient to prove if $\sigma_i(E_u) > 0 $,  then $\sigma_{i-1}(E_u) > 0 $ for all $2 \le i \le \ k.$

Note
\begin{align}\label{fi}
 f_i=  u_{tt}^{1-i}\sigma_i(E_u)= u_{tt}\sigma_i(A_u)-\langle T_{i-1}(A_u),\nabla u_t\otimes \nabla u_t\rangle.
 \end{align}
 It yields
 \begin{align}\label{}
  u_{tt}=f_i\sigma_i(A_u)^{-1}+\langle T_{i-1}(A_u),\nabla u_t \otimes \nabla u_t\rangle\sigma_i(A_u)^{-1}.
 \end{align}
We compute
\begin{align}
 \begin{split}
  &\sigma_{i-1}(E_u)\\
  =&u_{tt}^{i-1}\sigma_{i-1}(A_u)-u_{tt}^{i-2}\langle T_{i-2}(A_u),\nabla u_t \otimes \nabla u_t\rangle\\
  =&[f_i\sigma_i(A_u)^{-1}+\langle  T_{i-1}(A_u),\nabla u_t \otimes \nabla u_t\rangle\sigma_i(A_u)^{-1}]u_{tt}^{i-2}\sigma_{i-1}(A_u)\\
  &-u_{tt}^{i-2}\langle T_{i-2}(A_u),\nabla u_t \otimes \nabla u_t\rangle\\
  =&f_i \sigma_i(A_u)^{-1}\sigma_{i-1}(A_u)u_{tt}^{i-2}+ u_{tt}^{i-2}\langle\dfrac{\sigma_{i-1}(A_u)}{\sigma_i(A_u)}T_{i-1}(A_u)-T_{i-2}(A_u),\nabla u_t \otimes \nabla u_t\rangle
\end{split}
 \end{align}
 So we only need to show
 \begin{align}\label{}
   \dfrac{\sigma_{i-1}(A_u)}{\sigma_i(A_u)}T_{i-1}(A_u)-T_{i-2}(A_u)
 \end{align}
 is positive definite.

 Diagonalize $A_u$ with eigenvalues $\tilde{\lambda}=(\tilde{\lambda_1},...,\tilde{\lambda_n})$, and denote by $\sigma_k(\tilde{\lambda}|l)$
  the symmetric function with $\tilde{\lambda}_l=0$. Then we need to verify $\forall \;l \;\in \{1,...n\}$
  \begin{align}\label{}
   \dfrac{\sigma_{i-1}(\tilde{\lambda})}{\sigma_i(\tilde{\lambda})}\sigma_{i-1}(\tilde{\lambda}|l)-\sigma_{i-2}(\tilde{\lambda}|l) \ge 0.
 \end{align}
It is true by
\begin{align}
 \begin{split}
   &\sigma_{i-1}(\tilde{\lambda})\sigma_{i-1}(\tilde{\lambda}|l)-\sigma_i(\tilde{\lambda})\sigma_{i-2}(\tilde{\lambda}|l)\\
   =& [\sigma_{i-1}(\tilde{\lambda}|l)+\tilde{\lambda}_l\sigma_{i-2}(\tilde{\lambda}|l)]\sigma_{i-1}(\tilde{\lambda}|l)
   -[\sigma_{i}(\tilde{\lambda}|l)+\tilde{\lambda}_l\sigma_{i-1}(\tilde{\lambda}|l)]\sigma_{i-2}(\tilde{\lambda}|l)\\
   =&\sigma_{i-1}^2(\tilde{\lambda}|l) -\sigma_{i}(\tilde{\lambda}|l)\sigma_{i-2}(\tilde{\lambda}|l)\\
   \ge& 0.
\end{split}
 \end{align}
The last inequality holds from Newton's inequality \eqref{eqN2}.

\end{proof}

In the following for simplicity we denote the symmetric tensor product as follows,
\[
X\boxtimes Y:=X\otimes Y+Y\otimes X.
\]

\begin{prop}\label{proplv}
Given $u\in C^2$ such that $A_u\in \Gamma^+_k$, then the equation \eqref{eqf2} is strictly elliptic when $f>0$.
The linearized operator is given by
\begin{equation}\label{E3}
 \begin{split}
\mathcal{L}_{F_k}(v)= &(1-k)u_{tt}^{-k}\sigma_k(E_u)v_{tt} + u_{tt}^{1-k}\langle T_{k-1}(E_u), v_{tt}A_u + u_{tt}\mathcal{L}_{A_u}(v)- \nabla u_t\boxtimes \nabla v_t \rangle\\
=& u_{tt}^{-1}fv_{tt} +  u_{tt}^{1-k}\langle T_{k-1}(E_u),  u_{tt}\mathcal{L}_{A_u}(v)- \nabla u_t\boxtimes \nabla v_t +u_{tt}^{-1}v_{tt} \nabla u_t\otimes \nabla u_t \rangle .
\end{split}
 \end{equation}
where $\cL_{A_u}(v)$ is the linearization of $A_u$, given by
$$\mathcal{L}_{A_u}(v)=\nabla^2 v+ \nabla  u \boxtimes\nabla v - \langle \nabla u , \nabla v \rangle g.$$
\end{prop}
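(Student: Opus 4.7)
The plan is to linearize $F_k$ in its compact form $F_k = u_{tt}^{1-k}\sigma_k(E_u)$, where $E_u = u_{tt}A_u - \nabla u_t \otimes \nabla u_t$ as already used in the excerpt, derive the first formula in \eqref{E3} by direct differentiation, then algebraically reorganize it into the second formula. The second formula is designed so that the principal symbol manifests itself as a sum of nonnegative quadratics, from which strict ellipticity falls out.

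First I would compute the variation along the family $u + sv$ at $s = 0$. The basic pieces are $\partial_s u_{tt} = v_{tt}$, $\partial_s \nabla u_t = \nabla v_t$, and $\partial_s A_u = \mathcal{L}_{A_u}(v)$ (the last coming from the definition $A_u = A + \nabla^2 u + \nabla u \otimes \nabla u - \frac{1}{2}|\nabla u|^2 g$ and the Leibniz rule). Combining these yields
\[
\partial_s E_u = v_{tt}A_u + u_{tt}\mathcal{L}_{A_u}(v) - \nabla u_t \boxtimes \nabla v_t.
\]
With $\partial_s \sigma_k(E_u) = \langle T_{k-1}(E_u), \partial_s E_u\rangle$ and the chain rule on $u_{tt}^{1-k}$, this produces the first formula in \eqref{E3}.

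To pass to the second formula, I would use the Euler-type identity $\langle T_{k-1}(E_u), E_u\rangle = k\sigma_k(E_u)$ together with $E_u = u_{tt}A_u - \nabla u_t \otimes \nabla u_t$ to obtain
\[
u_{tt}\langle T_{k-1}(E_u), A_u\rangle = k\sigma_k(E_u) + \langle T_{k-1}(E_u), \nabla u_t \otimes \nabla u_t\rangle.
\]
Substituting this into the coefficient of $v_{tt}$ in the first formula and using $u_{tt}^{1-k}\sigma_k(E_u) = f$, the $(1-k)\sigma_k(E_u)$ term combines with $k\sigma_k(E_u)$ to leave $u_{tt}^{-1}f v_{tt}$ plus the extra piece $u_{tt}^{-k}\langle T_{k-1}(E_u), \nabla u_t \otimes \nabla u_t\rangle v_{tt}$; folding the latter back under the $T_{k-1}(E_u)$ pairing gives the second formula.

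For strict ellipticity I would look at the principal symbol of the second formula at a covector $\xi = (\xi_0, \tilde{\xi}) \in \R \times \R^n$ by substituting $v_{tt} \mapsto \xi_0^2$, $\nabla v_t \mapsto \xi_0 \tilde{\xi}$, and $\nabla^2 v \mapsto \tilde{\xi} \otimes \tilde{\xi}$. After collecting the $T_{k-1}(E_u)$ contributions, the symbol simplifies to
\[
u_{tt}^{-1}f\xi_0^2 + u_{tt}^{-k}\bigl\langle T_{k-1}(E_u), (u_{tt}\tilde{\xi} - \xi_0\nabla u_t)\otimes(u_{tt}\tilde{\xi} - \xi_0\nabla u_t)\bigr\rangle.
\]
Since $f > 0$ forces $u_{tt}\sigma_k(A_u) > \langle T_{k-1}(A_u),\nabla u_t\otimes\nabla u_t\rangle \geq 0$ and hence $u_{tt} > 0$, Proposition \ref{Lem1} gives $E_u \in \Gamma_k^+$, and then Proposition \ref{gamma1}(2) makes $T_{k-1}(E_u)$ positive definite. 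Both summands are therefore nonnegative, and a short case split on whether $\xi_0 = 0$ shows at least one is strictly positive whenever $(\xi_0, \tilde{\xi}) \neq 0$. The only step requiring genuine care is the algebraic cancellation passing from the first to the second form; the remainder is bookkeeping.
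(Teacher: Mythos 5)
Your proposal is correct and takes essentially the same route as the paper. The linearization via the compact form $F_k = u_{tt}^{1-k}\sigma_k(E_u)$ is exactly how the paper proceeds, and the passage to the second formula uses the same Euler identity $\langle T_{k-1}(E_u), E_u\rangle = k\sigma_k(E_u)$ (the paper records this as the identity $\langle T_{k-1}(E_u),E_u\rangle-(k-1)\sigma_k(E_u)=\sigma_k(E_u)$). For ellipticity, the paper works with the symbol of the first formula and writes the relevant tensor as $u_{tt}^{-1}\xi^2 E_u + Y\otimes Y$ with $Y=\sqrt{u_{tt}}\,\tilde\xi - \xi_0\nabla u_t/\sqrt{u_{tt}}$, then uses the Euler identity to identify the $E_u$ contribution as $u_{tt}^{-1}\sigma_k(E_u)\xi_0^2$; you instead read the symbol off the second formula, where the $T_{k-1}(E_u)$-pairing factors at once as $(u_{tt}\tilde\xi-\xi_0\nabla u_t)\otimes(u_{tt}\tilde\xi-\xi_0\nabla u_t)$. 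These are the same decomposition up to a rescaling of $Y$, and both rest on $E_u\in\Gamma_k^+$ (from Proposition~\ref{Lem1}) giving $T_{k-1}(E_u)>0$. Your organization is marginally cleaner in that the positive-semidefinite quadratic piece is manifest without invoking the Euler identity twice, but the content is identical.
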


\begin{proof}
First note that when $f>0$, by the assumption $A_u\in \Gamma^+_k$, we have $u_{tt}>0$.
Suppose $\delta u=v$, and we use the variation of $\sigma_k$,
$\delta \sigma_k(E_u)=\langle T_{k-1}(E_u), \delta E_u\rangle$. By direct computation we have
\[
\cL_{F_k}(v)=u_{tt}^{1-k}\langle T_{k-1}(E_u), v_{tt} A_u+u_{tt}\cL_{A_u}(v)-\nabla u_t\otimes \nabla v_t-\nabla v_t\otimes \nabla u_t\rangle -(k-1)u_{tt}^{-k}\sigma_k(E_u)v_{tt}.\]
To show the ellipticity, we only need to take care of second order derivatives of $v$. The leading terms reads,
\[
u_{tt}^{1-k}\langle T_{k-1}(E_u), v_{tt} A_u+u_{tt} \nabla^2 v-\nabla u_t\otimes \nabla v_t-\nabla v_t\otimes \nabla u_t\rangle -(k-1)u_{tt}^{-k}\sigma_k(E_u) v_{tt}
\]
Replacing the derivatives of $(v_t, \nabla v)$ by a vector $(\xi, X)\in T([0, 1]\times M)=\R\times \R^{n}$, we need to show that the following quadratic form is positive definite,
\[
Q(\xi, X):=\langle T_{k-1}(E_u), \xi^2 A_u+u_{tt} X\otimes X-\xi \nabla u_t\otimes X-\xi X\otimes \nabla u_t\rangle -(k-1)u_{tt}^{-1}\sigma_k(E_u) \xi^2
\]
We compute
\[
\begin{split}
&\xi^2 A_u+u_{tt} X\otimes X-\xi \nabla u_t\otimes X-\xi X\otimes \nabla u_t\\
=&\xi^2(A_u-u_{tt}^{-1}\nabla u_t\otimes \nabla u_t)+Y\otimes Y\\
=&u^{-1}_{tt} \xi^2 E_u+Y\otimes Y
\end{split}
\]
where $Y=\sqrt{u_{tt}}X-\xi \nabla u_t$. It follows that
\[
Q(\xi, X)=\langle T_{k-1}(E_u), Y\otimes Y\rangle+ u^{-1}_{tt}\left(\langle T_{k-1}(E_u), E_u\rangle-(k-1)\sigma_k(E_u)\right) \xi^2.
\]
Since $E_u\in \Gamma^{+}_k$ from Proposition~\ref{Lem1}, $T_{k-1}(E_u)$ is positive definite. A direct computation by Proposition \ref{prop3.1} gives
\begin{equation}\label{Eu1}
\langle T_{k-1}(E_u),  E_u\rangle-(k-1)\sigma_k(E_u)=\sigma_k(E_u)>0
\end{equation}
It then follows that, for $(\xi, X)\neq 0$,
$Q(\xi, X)>0$.
To show the second identity in \eqref{E3}, we compute
\[
\langle T_{k-1}(E_u), v_{tt} A_u\rangle= u_{tt}^{-1}\langle T_{k-1}(E_u), E_u\rangle v_{tt}+\langle T_{k-1}(E_u), u_{tt}^{-1} v_{tt} \nabla u_t\otimes \nabla u_t\rangle.
\]
Applying \eqref{Eu1} again we get the result.
This completes the proof.
\end{proof}

We will need the comparison function as follows. Denote $U_a=at(1-t)+(1-t)u_0+t u_1$ for any number $a$. Note that $U_0=u_0$ at $t=0$, $U_1=u_1$ at $t=1$ for any $t$. In particular $U_a$ has the same boundary value with $u$.
If $u_0, u_1$ are admissible ($A_{u_i}\in \Gamma^+_k$ for $i=0, 1$), $U_0=(1-t)u_0+t u_1$ is admissible \cite{Via} and hence $U_a$ are all admissible, since we have $A_{U_0}=A_{U_a}, (\nabla U_0)_t=(\nabla U_{a})_t$ for any $a$.
Gursky-Streets \cite{GS-2017} proved a uniform $C^1$ estimate for the equation
\[
u_{tt}^{1-k}\sigma_k(E_u^\epsilon)=f,
\]
where $E_u^\epsilon=(1+\epsilon)u_{tt}A_u-\nabla u_t\otimes \nabla u_t$, for any $k\geq 1$. They introduced an extra $\epsilon$-parameter for the purpose of $C^{1, 1}$ estimates, which does not play any essential role in $C^1$ estimates. Hence their results clearly apply in our setting to obtain uniform $C^1$ estimates and most computations required in $C^1$ estimates can be found in \cite{GS-2017}. Nevertheless we will include details of $C^1$ estimates for completeness
(these computations will be needed for uniform $C^{1, 1}$ estimates). Our arguments below are a variant of the case $k=2, n\geq 4$, explored in \cite{He18}.

\subsection{$C^0$ estimates}
In this section we derive the $C^0$ estimates.

\begin{prop}\label{c0}There exists $a=a(u_0, u_1, \sup f)>0$ sufficiently large, such that
\begin{equation*}
U_{-a}\leq u\leq U_0=(1-t)u_0+t u_1.
\end{equation*}
\end{prop}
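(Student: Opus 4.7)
The plan is to establish both inequalities by the maximum principle, exploiting the fact that $U_a - U_0 = at(1-t)$ depends only on $t$: hence $\nabla U_a = \nabla U_0$ and $A_{U_a} = A_{U_0}$ for every $a$, while $(U_a)_{tt} = -2a$.

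For the upper bound $u \le U_0$, set $w = u - U_0$. It vanishes on $\{t = 0, 1\}$, so a positive maximum would be attained at some interior point $(t_0, x_0)$. There $\nabla u = \nabla U_0$, so the nonlinear terms $\nabla u\otimes\nabla u - \tfrac12|\nabla u|^2 g$ in the Schouten tensor coincide, giving $A_u - A_{U_0} = \nabla^2 w \le 0$, while $u_{tt} = w_{tt} \le 0$ since $(U_0)_{tt} = 0$. Because $A_u \in \Gamma^+_k$, Proposition~\ref{gamma1} yields $\sigma_k(A_u) > 0$ and $T_{k-1}(A_u)$ positive definite, so
\[
f = u_{tt}\sigma_k(A_u) - \langle T_{k-1}(A_u), \nabla u_t \otimes \nabla u_t\rangle \le 0,
\]
contradicting $f > 0$. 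Hence $u \le U_0$.

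For the lower bound, one computes
\[
F_k(U_{-a}) = 2a\,\sigma_k(A_{U_0}) - \langle T_{k-1}(A_{U_0}), (\nabla u_1 - \nabla u_0)^{\otimes 2}\rangle.
\]
Since $A_{U_0} \in \Gamma^+_k$ on the compact $M\times[0,1]$, $\inf \sigma_k(A_{U_0}) > 0$, and the subtracted term depends only on $u_0, u_1$ and $(M,g)$; one fixes $a = a(u_0, u_1, \sup f)$ so large that $F_k(U_{-a}) > \sup f$ pointwise. Let $\phi = u - U_{-a}$, which vanishes on $\{t = 0, 1\}$, and suppose it attains a negative minimum at some interior $(t_0, x_0)$. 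The full space-time Hessian
\[
D^2\phi = \begin{pmatrix} \phi_{tt} & (\nabla\phi_t)^T \\ \nabla\phi_t & \nabla^2\phi \end{pmatrix}
\]
then satisfies $D^2\phi(t_0, x_0) \ge 0$. Because $\nabla u = \nabla U_{-a}$ at the critical point, the Schouten quadratic terms cancel and $A_u - A_{U_0} = \nabla^2\phi$; combined with $(U_{-a})_{tt} = 2a$ and $\nabla(U_{-a})_t = \nabla u_1 - \nabla u_0$, this shows that
\[
R_u - R_{U_{-a}} = \begin{pmatrix} u_{tt} - 2a & (\nabla u_t - (\nabla u_1 - \nabla u_0))^T \\ \nabla u_t - (\nabla u_1 - \nabla u_0) & A_u - A_{U_0} \end{pmatrix}
\]
evaluated at $(t_0, x_0)$ coincides with $D^2\phi$, hence is positive semi-definite. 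Both $R_u(t_0,x_0)$ and $R_{U_{-a}}(t_0,x_0)$ lie in the convex cone $\cS$ (Theorem~\ref{thm2.1}), so the segment connecting them stays in $\cS$. The strict ellipticity of the linearization established in Proposition~\ref{proplv} is equivalent to $DF_k(R)\cdot \eta\eta^T > 0$ for every $0\ne\eta\in\R^{n+1}$ and every $R\in\cS$; decomposing an arbitrary PSD matrix as a nonnegative combination of rank-one ones then upgrades this to $DF_k(R)\cdot N \ge 0$ for every PSD $N$. The fundamental theorem of calculus along the segment gives
\[
F_k(R_u) - F_k(R_{U_{-a}}) = \int_0^1 DF_k(R_s)\cdot(R_u - R_{U_{-a}})\,ds \ge 0,
\]
so $f = F_k(R_u) \ge F_k(R_{U_{-a}}) > \sup f$, a contradiction. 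Therefore $\phi \ge 0$.

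The main obstacle is the nonlinearity of $A_u$ in $\nabla u$: the admissible set $\cC^+_k$ is not convex in $u$, and $A_u - A_{U_{-a}}$ is not simply the Hessian of $u - U_{-a}$, so a naive comparison argument does not work. The key observation that rescues the plan is that at an interior critical point of $\phi = u - U_{-a}$ the identity $\nabla u = \nabla U_{-a}$ kills the Schouten quadratic corrections and makes the full matrix difference $R_u - R_{U_{-a}}$ coincide with the ordinary space-time Hessian $D^2\phi$. Once this identification is in place, the argument reduces to the standard PSD-monotonicity of $F_k$ on the convex cone $\cS$ guaranteed by Theorem~\ref{thm2.1} and Proposition~\ref{proplv}.
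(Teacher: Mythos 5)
Your proof is correct, but takes a genuinely different route from the paper's, most visibly in the lower bound. For the upper bound the paper dispenses with any maximum-principle argument entirely: $u_{tt}>0$ makes $u$ strictly convex in $t$, so $u(\cdot,t)<(1-t)u_0+tu_1$ follows immediately; your contradiction argument (positive interior maximum of $w=u-U_0$ forces $u_{tt}\le 0$, incompatible with $f>0$) is valid but more elaborate than needed. For the lower bound both proofs work at an interior minimum $p$ of $\phi=u-U_{-a}$ and both hinge on the same cancellation: $\nabla\phi(p)=0$ removes the quadratic terms in the Schouten tensor so that only second-order information survives. Where the paper differs is in how this information is used. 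The paper parametrizes $u^s=su+(1-s)U_{-a}$, notes (implicitly) that at $p$ the matrix $R_{u^s}(p)$ is \emph{linear} in $s$, invokes the concavity of $\log F_k$ on $\cS$ to get the derivative bound $F_k^{-1}\cL_{F_k}(\phi)\le \log F_k(R_u)-\log F_k(R_{U_{-a}})$, and then separately establishes $\cL_{F_k}(\phi)\ge 0$ by a case split on $\phi_{tt}$. You instead observe the cleaner identity $R_u-R_{U_{-a}}=D^2\phi\ge 0$ at $p$, recognize the ellipticity computation of Proposition~\ref{proplv} as precisely the statement $DF_k(R)\cdot\eta\eta^T>0$ for $R\in\cS$ (and hence $DF_k(R)\cdot N\ge 0$ for every PSD $N$), and integrate along the segment inside $\cS$ to get $F_k(R_u)\ge F_k(R_{U_{-a}})$ directly. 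Your version substitutes monotonicity-from-ellipticity for concavity and avoids the explicit verification of $\cL_{F_k}(\phi)\ge 0$; both still lean on the convexity of $\cS$ from Theorem~\ref{thm2.1} to keep the segment in the domain. One small gap you should flag: Proposition~\ref{proplv} is stated only for $R=R_u$ coming from an admissible solution $u$, so your extension to arbitrary $R\in\cS$ needs the (true, and easily checked) remark that the ellipticity argument there is purely algebraic, using only $r\in\Gamma_k^+$, $r_{00}>0$ and $F_k(R)>0$, all of which hold on $\cS$.
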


\begin{proof}First by $u_{tt}>0$, we have
\[
\frac{u(\cdot, t)-u(\cdot, 0)}{t-0}<\frac{u(\cdot, 1)-u(\cdot, t)}{1-t}
\]
That gives the upper bound,
\[
u(\cdot, t)<(1-t)u(\cdot, 0)+tu(\cdot, 1)=(1-t)u_0+t u_1.
\]
We claim $u-U_{-a}\geq 0$ for $a>0$ sufficiently large. We argue by contradiction.
Since $u-U_{-a}=0$ for $t=0$ and $t=1$,  there exists an interior point $p=(t, x)\in (0, 1)\times M$, such that $u-U_{-a}$ obtains its minimum at $p$. Denote $u^s=su+(1-s)U_{-a}$ and $v=\p_s u^s(s=1)=u-U_{-a}$. Then $D^2v\geq 0$  and $\nabla v=0$ at $p$. By the concavity of $\log F_k$, it follows that for $s\in [0, 1]$,
\[
\log F_k(u^s_{tt}, A_{u^s}, \nabla u^s_t)\geq s\log F_k(u_{tt}, A_u, \nabla u_t)+(1-s)\log F_k({(U_{-a})}_{tt}, A_{U_{-a}}, \nabla {(U_{-a})}_{t})
\]
At $s=1$, we get (at $p$),
\begin{equation}\label{zero1}
{F_k}^{-1}\cL_{F_k} (v)\leq \log F_k(u_{tt}, A_u, \nabla u_t)-\log F_k({(U_{-a})}_{tt}, A_{U_{-a}}, \nabla {(U_{-a})}_{t}),
\end{equation}
where ${F_k}^{-1}\cL_{F_k}$ takes values at $u\; (s=1)$.
We can choose $a$ large enough such that $$F_k({(U_{-a})}_{tt}, A_{U_{-a}}, \nabla {(U_{-a})}_{t})=2a \sigma_k(A_{U_{0}})-(T_{k-1}(A_{U_{0}}), \nabla {(U_{0})}_{t}\otimes \nabla {(U_{0})}_{t})$$ is sufficiently large. Then the right hand side of \eqref{zero1} is negative (at $p$) since $ F_k(u_{tt}, A_u, \nabla u_t)=f$.  This is a contradiction given the claim that $\cL_{F_k}(v)\geq 0$ at $p$. Note that $D^2 v\geq 0, \nabla v=0$ at $p$. We compute, using $\nabla v=0$ at $p$,
\[
\cL_{F_k}(v)=u_{tt}^{1-k}\langle T_{k-1}(E_u), v_{tt} A_u+u_{tt} \nabla^2 v-\nabla u_t\otimes \nabla v_t-\nabla v_t\otimes \nabla u_t\rangle +(1-k)u_{tt}^{-k}\sigma_k(E_u) v_{tt}
\]
We can assume $v_{tt}>0$. Otherwise we have $v_{tt}=0$, then $u_{tt}=2a>0$. Notice that $\nabla v_t=0$ and $\nabla^2 v\geq 0$ (since $D^2 v\geq 0$ at $p$). In this case the claim follows trivially. If $v_{tt}>0$, the argument follows similarly as in Proposition \ref{proplv}. Indeed we write
\[
\cL_{F_k}(v)=u_{tt}^{1-k}\langle T_{k-1}(E_u), \frac{v_{tt}}{u_{tt}}E_u+Y\otimes Y+u_{tt}(\nabla^2v-v_{tt}^{-1}\nabla v_t\otimes \nabla v_t)\rangle +(1-k)u_{tt}^{-k}\sigma_k(E_u) v_{tt},
\]
where $Y=\sqrt{v_{tt}/u_{tt}} \nabla u_t-\sqrt{u_{tt}/v_{tt}}\nabla v_t$. By \eqref{Eu1}
and the positivity of $\nabla^2v-v_{tt}^{-1}\nabla v_t\otimes \nabla v_t$ (this is because $D^2 v\geq 0$), it follows that $\cL_{F_k}(v)\geq 0$.
\end{proof}

\subsection{$C^1$ estimates}First we have the following,

\begin{prop}\label{pro3.8}\label{c1t}Let $a$ be the constant in Proposition \ref{c0}. Then we have,
\[
-a+u_1-u_0\leq u_t\leq a+u_1-u_0
\]
\end{prop}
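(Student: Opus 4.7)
The plan is to exploit the convexity of $u$ in $t$ (that is, $u_{tt}>0$) together with the comparison bounds $U_{-a}\le u\le U_0$ already established in Proposition \ref{c0}. The key observation is that these two comparison functions share the boundary values of $u$ at $t=0$ and $t=1$, so the sign of the first $t$-derivative of $u-U_{-a}$ and of $u-U_0$ at the endpoints is forced.

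First I would verify that $u_{tt}>0$ on all of $M\times[0,1]$. This is immediate from the equation and admissibility: since $A_u\in\Gamma_k^+$ we have $\sigma_k(A_u)>0$ and $\langle T_{k-1}(A_u),\nabla u_t\otimes\nabla u_t\rangle\ge 0$, so \eqref{eqf2} with $f>0$ forces $u_{tt}=(f+\langle T_{k-1}(A_u),\nabla u_t\otimes\nabla u_t\rangle)/\sigma_k(A_u)>0$. Hence $u_t(t,x)$ is strictly increasing in $t$ for each fixed $x\in M$.

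Next I would compute the boundary values of the $t$-derivatives of the comparison functions. A direct differentiation gives $(U_{-a})_t=-a(1-2t)+u_1-u_0$, so $(U_{-a})_t(0,x)=-a+u_1-u_0$ and $(U_{-a})_t(1,x)=a+u_1-u_0$. Since $u-U_{-a}\ge 0$ by Proposition \ref{c0} with equality at $t=0$ and $t=1$, the function $t\mapsto (u-U_{-a})(t,x)$ attains a minimum at each endpoint, forcing
\begin{equation*}
(u-U_{-a})_t(0,x)\ge 0,\qquad (u-U_{-a})_t(1,x)\le 0,
\end{equation*}
i.e., $u_t(0,x)\ge -a+u_1-u_0$ and $u_t(1,x)\le a+u_1-u_0$.

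Finally, I would combine these endpoint bounds with the monotonicity in $t$: for any $(t,x)\in[0,1]\times M$,
\begin{equation*}
u_t(t,x)\ge u_t(0,x)\ge -a+u_1(x)-u_0(x),\qquad u_t(t,x)\le u_t(1,x)\le a+u_1(x)-u_0(x),
\end{equation*}
which is exactly the claimed bound. There is really no substantial obstacle here: the only thing that could go wrong would be if $u_{tt}$ were not pointwise positive, but this is guaranteed by the structure of the equation and the admissibility of $u$. (The upper comparison $u\le U_0$ is not strictly needed for this proposition, though it yields the sharper intermediate bounds $u_t(0,x)\le u_1-u_0\le u_t(1,x)$.)
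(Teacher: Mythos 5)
Your proof is correct and follows essentially the same route as the paper: both deduce $u_t$ increasing in $t$ from $u_{tt}>0$, and then obtain the endpoint bounds on $u_t$ from the comparison $U_{-a}\le u\le U_0$ of Proposition \ref{c0} (the paper phrases the endpoint step as a one-sided difference-quotient limit, which is equivalent to your observation that $u-U_{-a}$ has an interior-direction nonnegative derivative at its boundary minima). No gap.
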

\begin{proof}
Since $u_{tt}>0$, it follows that $u_t(t, x)$ is increasing in $t$. Hence we only need to argue $u_t(0, x)\leq u_t(1, x)$ are both bounded. We compute, using Proposition \ref{c0},
\[
u_t(0, \cdot)=\lim_{t\rightarrow 0}\frac{u(t, \cdot)-u(0, \cdot)}{t}\geq \lim_{t\rightarrow 0} \frac{at(t-1)+t(u_1-u_0)}{t}=-a+u_1-u_0.
\]
It is evident that $u_t(0, \cdot)\leq u_1-u_0$ by convexity. Similarly we have $u_1-u_0\leq u_t(1, \cdot)\leq a+u_1-u_0$.

\end{proof}

To derive estimates of $|\nabla u|^2$ and second order derivatives, we need some preparation due to the complicated computations.
First we need to choose a normalization condition. Note that if $u$ is admissible, then $\tilde u=u-c_1 t-c_2$ is also admissible since $A_u, E_u$ do not change at all. Hence if $u$ is a solution, $\tilde u=u-c_1t-c_2$ is also a solution since  $\nabla \tilde u=\nabla u, D^2\tilde u=D^2 u$. The corresponding boundary condition is changed by a constant with $\tilde u_0=u_0-c_2, \tilde u_1=u_1-c_1-c_2$ and  $\tilde u_t=u_t-c_1$. Hence we can choose two sufficiently large constants $c_1$ and $c_2$ such that $\tilde u\leq -1$, and $\tilde u_t\leq -1$.  We can choose such a normalization condition on $u_0, u_1$ such that,
\begin{equation}\label{normalization}
-c_0\leq u\leq -1, -c_0\leq u_t\leq -1,
\end{equation}
where $c_0$ is the uniform bound we have obtained above for $|u|$ and $|u_t|$. \\

Next we compute $\cL_{F_k}(v)$ for various barrier functions $v$. The philosophy is well-known in nonlinear elliptic theory, to construct various barrier functions $v$ such that
\[
\cL_{F_k}(v)\geq -C+\text{good positive terms}
\]
Such barrier functions serve as the purpose of  \emph{subsolutions} with respect to $\cL_{F_k}$ and play an essential role in the maximum principle argument.
The first such function is the $t$-functions,
\begin{prop}
Suppose $v=v(t)$ is a $t$-function, then
\begin{equation}
\begin{split}
\cL_{F_k}(v)=&\frac{v_{tt}}{u^k_{tt}}\left(\langle T_{k-1}(E_u), u_{tt}A_u\rangle-(k-1)\sigma_k(E_u)\right)\\
=&\frac{v_{tt}}{u_{tt}^k}\left(\langle T_{k-1}(E_u), E_u+\nabla u_t\otimes \nabla u_t\rangle-(k-1)\sigma_k(E_u)\right)\\
=&\frac{v_{tt}}{u_{tt}^{k}}\left(\sigma_k(E_u)+\langle T_{k-1}(E_u), \nabla u_t\otimes \nabla u_t\rangle \right)\\
=&v_{tt}\sigma_k(A_u),
\end{split}
\end{equation}
where we apply Proposition \ref{gamma2} in the last step above.
In particular, \begin{equation}\label{t}
\cL_{F_k}(t^2)=2\sigma_k(A_u)\end{equation}
\end{prop}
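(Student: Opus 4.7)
The plan is a direct specialization of Proposition~\ref{proplv}. When $v=v(t)$ is purely a function of $t$, all spatial quantities attached to $v$ vanish: $\nabla v=0$, $\nabla v_t=0$, and hence $\cL_{A_u}(v)=\nabla^2 v+\nabla u\boxtimes\nabla v-\langle\nabla u,\nabla v\rangle g=0$. Inserting these zeros into \eqref{E3} kills the middle two terms inside the bracket, leaving only the $v_{tt}A_u$ contribution plus the $(1-k)u_{tt}^{-k}\sigma_k(E_u)v_{tt}$ term. Factoring out $v_{tt}/u_{tt}^{k}$ from both surviving pieces produces the first line of the claimed chain of equalities.

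For the second equality I would substitute $u_{tt}A_u=E_u+\nabla u_t\otimes\nabla u_t$, which is nothing but the definition of $E_u$ rearranged. For the third equality I would invoke the Euler-type contraction
$$\langle T_{k-1}(E_u),E_u\rangle=k\sigma_k(E_u),$$
which follows (after diagonalizing $E_u$) from the identity $\sum_i\lambda_i\sigma_{k-1}(\lambda|i)=k\sigma_k(\lambda)$ in Proposition~\ref{prop3.1}. This collapses $\langle T_{k-1}(E_u),E_u\rangle-(k-1)\sigma_k(E_u)$ into a single $\sigma_k(E_u)$ and delivers the third identity.

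The final equality, $u_{tt}^{-k}\bigl(\sigma_k(E_u)+\langle T_{k-1}(E_u),\nabla u_t\otimes\nabla u_t\rangle\bigr)=\sigma_k(A_u)$, is the place where Proposition~\ref{gamma2} enters. Writing $E_u=u_{tt}A_u-\nabla u_t\otimes\nabla u_t$ and applying both identities of Proposition~\ref{gamma2} with $A=u_{tt}A_u$ and $X=\nabla u_t$, together with the homogeneities $\sigma_k(\mu B)=\mu^k\sigma_k(B)$ and $T_{k-1}(\mu B)=\mu^{k-1}T_{k-1}(B)$, gives
$$\sigma_k(E_u)=u_{tt}^k\sigma_k(A_u)-u_{tt}^{k-1}\langle T_{k-1}(A_u),\nabla u_t\otimes\nabla u_t\rangle$$
and
$$\langle T_{k-1}(E_u),\nabla u_t\otimes\nabla u_t\rangle=u_{tt}^{k-1}\langle T_{k-1}(A_u),\nabla u_t\otimes\nabla u_t\rangle.$$
Summing these and dividing by $u_{tt}^k$ yields exactly $\sigma_k(A_u)$, so $\cL_{F_k}(v)=v_{tt}\sigma_k(A_u)$. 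Taking $v=t^2$ (so that $v_{tt}=2$) produces \eqref{t}.

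No real obstacle is anticipated: the argument is a purely algebraic simplification of \eqref{E3} using the definition of $E_u$, the Euler contraction for $T_{k-1}$, and Proposition~\ref{gamma2}. The only place requiring care is tracking the homogeneity factors $u_{tt}^k$ and $u_{tt}^{k-1}$ when translating between $E_u$-expressions and $A_u$-expressions; this bookkeeping is what makes the $(k-1)\sigma_k(E_u)$ term from the linearization precisely cancel and convert the remaining bracket into a clean multiple of $\sigma_k(A_u)$.
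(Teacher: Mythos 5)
Your proof is correct and follows exactly the computation the paper presents as the chain of equalities: specialize \eqref{E3} by killing all spatial derivatives of $v$, substitute $u_{tt}A_u=E_u+\nabla u_t\otimes\nabla u_t$, apply the Euler contraction $\langle T_{k-1}(E_u),E_u\rangle=k\sigma_k(E_u)$ (which is \eqref{Eu1}), and finally use both identities of Proposition~\ref{gamma2} together with $k$- and $(k-1)$-homogeneity of $\sigma_k$ and $T_{k-1}$ to reduce to $\sigma_k(A_u)$. This is the paper's argument, with the justifications spelled out.
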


The second choice is the function $-u$ itself.
We compute $\cL_{F_k}(u)$.
\begin{prop}\label{propu}We have,
\begin{equation}\label{u1}
\cL_{F_k}(u)=(k+1)u_{tt}^{1-k}\sigma_k(E_u)+u_{tt}^{2-k}\langle T_{k-1}(E_u), -A + \nabla  u \otimes\nabla u -\dfrac{1}{2}|\nabla u|^2 g \rangle.
\end{equation}
\end{prop}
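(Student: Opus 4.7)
The plan is to specialize the linearization formula \eqref{E3} of Proposition \ref{proplv} to the test function $v=u$ and then simplify using Proposition \ref{gamma2} together with the Euler-type identity $\langle T_{k-1}(E_u),E_u\rangle=k\sigma_k(E_u)$.

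First I would set $v=u$ in the first expression of \eqref{E3}, so that $v_{tt}=u_{tt}$, $\nabla v_t=\nabla u_t$ (hence $\nabla u_t\boxtimes\nabla v_t=2\nabla u_t\otimes\nabla u_t$), and
\[
\mathcal{L}_{A_u}(u)=\nabla^{2}u+\nabla u\boxtimes\nabla u-|\nabla u|^{2}g.
\]
Using the very definition $A_u=A+\nabla^{2}u+\nabla u\otimes\nabla u-\tfrac{1}{2}|\nabla u|^{2}g$ to substitute for $\nabla^{2}u$, a direct bookkeeping gives
\[
\mathcal{L}_{A_u}(u)=A_u-A+\nabla u\otimes\nabla u-\tfrac12|\nabla u|^{2}g.
\]
Plugging into \eqref{E3} yields
\begin{equation*}
\mathcal{L}_{F_k}(u)=(1-k)u_{tt}^{1-k}\sigma_k(E_u)+u_{tt}^{2-k}\bigl\langle T_{k-1}(E_u),\,2A_u-A+\nabla u\otimes\nabla u-\tfrac12|\nabla u|^{2}g\bigr\rangle-2u_{tt}^{1-k}\langle T_{k-1}(E_u),\nabla u_t\otimes\nabla u_t\rangle.
\end{equation*}

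Next I would handle the ``$2A_u$'' term. Writing $u_{tt}A_u=E_u+\nabla u_t\otimes\nabla u_t$ from the definition of $E_u$, I compute
\[
u_{tt}^{2-k}\langle T_{k-1}(E_u),2A_u\rangle=2u_{tt}^{1-k}\bigl(\langle T_{k-1}(E_u),E_u\rangle+\langle T_{k-1}(E_u),\nabla u_t\otimes\nabla u_t\rangle\bigr).
\]
The identity $\langle T_{k-1}(E_u),E_u\rangle=k\sigma_k(E_u)$ (an immediate consequence of Proposition \ref{prop3.1}) then turns the first piece into $2ku_{tt}^{1-k}\sigma_k(E_u)$, while the second piece exactly cancels the $-2u_{tt}^{1-k}\langle T_{k-1}(E_u),\nabla u_t\otimes\nabla u_t\rangle$ contribution above. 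Combining with the leading coefficient $(1-k)+2k=k+1$ produces the advertised formula
\[
\mathcal{L}_{F_k}(u)=(k+1)u_{tt}^{1-k}\sigma_k(E_u)+u_{tt}^{2-k}\bigl\langle T_{k-1}(E_u),\,-A+\nabla u\otimes\nabla u-\tfrac12|\nabla u|^{2}g\bigr\rangle.
\]

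There is really no obstacle here: once the linearization \eqref{E3} is written out and one recognizes that $u_{tt}A_u$ differs from $E_u$ by the rank-one correction $\nabla u_t\otimes\nabla u_t$, the whole calculation is algebraic. The only mildly delicate step is keeping track of the symmetrization convention in $\nabla u_t\boxtimes\nabla u_t=2\nabla u_t\otimes\nabla u_t$ and of the replacement of $\nabla^{2}u$ via the Schouten identity, which together ensure that the ``bad'' $\nabla u_t$-terms cancel and the answer is expressed purely in terms of $\sigma_k(E_u)$ and of the background quantities $A,\nabla u,|\nabla u|^{2}g$ that later serve as the background for constructing barriers.
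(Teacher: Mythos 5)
Your proof is correct and follows essentially the same route as the paper: both plug $v=u$ into the linearization \eqref{E3}, use the Schouten identity $A_u=A+\nabla^2u+\nabla u\otimes\nabla u-\tfrac12|\nabla u|^2g$ to eliminate $\nabla^2u$, and invoke the Euler identity $\langle T_{k-1}(E_u),E_u\rangle=k\sigma_k(E_u)$ (the paper's \eqref{Eu1}) to collect the $\sigma_k(E_u)$ terms and cancel the $\nabla u_t\otimes\nabla u_t$ contributions. The only cosmetic difference is the order in which the two substitutions are performed (you substitute into $\mathcal{L}_{A_u}(u)$ first, the paper first rewrites $u_{tt}A_u-2\nabla u_t\otimes\nabla u_t$ as $2E_u-u_{tt}A_u$), and the result is the same.
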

\begin{proof}By \eqref{E3}, we compute
\[
 \begin{split}
&\mathcal{L}_{F_k} (u)\\
=&(1-k)u_{tt}^{1-k}\sigma_k(E_u) + u_{tt}^{1-k}\langle T_{k-1}(E_u), u_{tt}A_u + u_{tt}(\nabla^2 u + 2\nabla  u \otimes\nabla u -|\nabla u|^2 g)- 2\nabla u_t\otimes \nabla u_t \rangle\\
=&(1-k)u_{tt}^{1-k}\sigma_k(E_u) + u_{tt}^{1-k}\langle T_{k-1}(E_u),2E_u- u_{tt}A_u + u_{tt}(\nabla^2 u + 2\nabla  u \otimes\nabla u -|\nabla u|^2 g) \rangle\\
=& (1-k)u_{tt}^{1-k}\sigma_k(E_u) +2ku_{tt}^{1-k}\sigma_k(E_u)+ u_{tt}^{1-k}\langle T_{k-1}(E_u), -u_{tt}A + u_{tt}(\nabla  u \otimes\nabla u -\dfrac{1}{2}|\nabla u|^2 g) \rangle\\
=& (k+1)u_{tt}^{1-k}\sigma_k(E_u)+u_{tt}^{2-k}\langle T_{k-1}(E_u), -A + \nabla  u \otimes\nabla u -\dfrac{1}{2}|\nabla u|^2 g \rangle.
\end{split}
 \]
where we have used \eqref{Eu1}.
\end{proof}

\begin{rmk}Both the propositions above are derived in \cite{GS-2017} for general $k$. We include the computations here for completeness.
\end{rmk}

We use the operator $D=(\p_t, \nabla)$ to denote the gradient on $\R\times M$, where the space derivative $\phi_k$ denotes the covariant derivative $\nabla _k\phi$. We rewrite \eqref{E3} as,
\begin{equation}\label{E4}
\cL_{F_k}(\phi)= u_{tt}^{-1}f \phi_{tt}+u_{tt}^{1-k}\left\langle T_{k-1}(E_u), P_u(D^2\phi)\right \rangle,
\end{equation}
where
\begin{equation}\label{P1}
P_u(D^2\phi)=u_{tt}\cL_{A_u} \phi-\nabla u_t\otimes \nabla \phi_t-\nabla \phi_t\otimes \nabla u_t+u_{tt}^{-1}\phi_{tt} \nabla u_t\otimes \nabla u_t.
\end{equation}

\begin{prop}\label{quadratic}We have the following,
\begin{equation}\label{E5}
\cL_{F_k}(\phi\psi)=\phi \cL_{F_k}(\psi)+\psi \cL_{F_k}(\phi)+Q_u(D\phi, D\psi)+2u_{tt}^{-1}f \phi_t\psi_t,
\end{equation}
where $Q_u$ is a quadratic form on $D\phi, D\psi$ given by
\begin{equation*}
Q_u(D\phi, D\psi)=u_{tt}^{1-k}\langle T_{k-1}(E_u),u_{tt}\nabla\phi\boxtimes\nabla\psi-\phi_t\nabla u_t\boxtimes \nabla\psi -\psi_t\nabla u_t\boxtimes \nabla\phi +2u_{tt}^{-1}\phi_t\psi_t\nabla u_t\otimes \nabla u_t\rangle.
\end{equation*}
Moreover, we compute
\begin{equation}\label{exponential}\cL_{F_k}(e^\phi)=e^\phi \cL_{F_k}(\phi)+e^{\phi}\left(\frac{1}{2}Q_u(D\phi, D\phi)+u_{tt}^{-1} f\phi_t^2\right)
\end{equation}
An important feature is that $Q_u$ is positive definite in the sense that
\[
Q_u(D\phi, D\phi)\geq 0.
\]
\end{prop}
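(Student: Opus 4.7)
The proof is a direct computation: $\cL_{F_k}$ is a linear second-order differential operator, so a product or chain rule identity must hold, and the task is simply to identify the cross terms carefully.

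My plan is to start from the expression \eqref{E4}, namely
\[
\cL_{F_k}(\phi)= u_{tt}^{-1}f\,\phi_{tt}+u_{tt}^{1-k}\bigl\langle T_{k-1}(E_u),\, P_u(D^2\phi)\bigr\rangle,
\]
and to expand $D^2(\phi\psi)$ by the product rule. For the pure second-derivative part one checks
\[
\nabla^2(\phi\psi)=\psi\nabla^2\phi+\phi\nabla^2\psi+\nabla\phi\boxtimes\nabla\psi,\qquad (\phi\psi)_{tt}=\psi\phi_{tt}+\phi\psi_{tt}+2\phi_t\psi_t,
\]
and similarly $\nabla(\phi\psi)_t=\psi\nabla\phi_t+\phi\nabla\psi_t+\phi_t\nabla\psi+\psi_t\nabla\phi$. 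Substituting these expansions into the definition of $\cL_{A_u}$ and of $P_u$ in \eqref{P1}, the terms linear in $D^2\phi$ with coefficient $\psi$ reassemble into $\psi P_u(D^2\phi)$ (and symmetrically $\phi P_u(D^2\psi)$), while the remaining first-derivative pieces collect into exactly the expression given for $Q_u$. Combining with the cross term $2u_{tt}^{-1}f\phi_t\psi_t$ coming from $(\phi\psi)_{tt}$ yields identity \eqref{E5}.

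For the exponential identity \eqref{exponential}, the chain rule gives
\[
\nabla^2 e^\phi=e^\phi(\nabla^2\phi+\nabla\phi\otimes\nabla\phi),\quad (e^\phi)_{tt}=e^\phi(\phi_{tt}+\phi_t^2),\quad \nabla(e^\phi)_t=e^\phi(\nabla\phi_t+\phi_t\nabla\phi).
\]
Substituting these into \eqref{E4} and factoring out $e^\phi$, the ``linear'' part produces $e^\phi\cL_{F_k}(\phi)$, while the remainder is $e^\phi$ times
\[
u_{tt}^{1-k}\bigl\langle T_{k-1}(E_u),\, u_{tt}\nabla\phi\otimes\nabla\phi-\phi_t\nabla u_t\boxtimes\nabla\phi+u_{tt}^{-1}\phi_t^2\nabla u_t\otimes\nabla u_t\bigr\rangle+u_{tt}^{-1}f\phi_t^2,
\]
which is precisely $\tfrac12 Q_u(D\phi,D\phi)+u_{tt}^{-1}f\phi_t^2$ after recognizing $\nabla\phi\boxtimes\nabla\phi=2\nabla\phi\otimes\nabla\phi$ in the formula for $Q_u$.

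The last point is the positivity claim, and here the key trick is completing the square. Introducing the vector
\[
Y:=u_{tt}\nabla\phi-\phi_t\nabla u_t,
\]
a direct expansion of $Y\otimes Y$ shows
\[
Y\otimes Y=u_{tt}^{2}\nabla\phi\otimes\nabla\phi-u_{tt}\phi_t\,\nabla u_t\boxtimes\nabla\phi+\phi_t^{2}\,\nabla u_t\otimes\nabla u_t,
\]
which is exactly (up to the positive factor $u_{tt}^{-k}$) the symmetric tensor whose pairing with $T_{k-1}(E_u)$ makes up $\tfrac12 Q_u(D\phi,D\phi)$; one verifies
\[
Q_u(D\phi,D\phi)=2u_{tt}^{-k}\bigl\langle T_{k-1}(E_u),\,Y\otimes Y\bigr\rangle.
\]
By Proposition~\ref{Lem1} we have $E_u\in\Gamma_k^{+}$, and by Proposition~\ref{gamma1}(2) this forces $T_{k-1}(E_u)$ to be positive definite. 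Since $u_{tt}>0$ whenever $f>0$, the pairing is nonnegative and the inequality $Q_u(D\phi,D\phi)\geq 0$ follows. The bilinear version $Q_u(D\phi,D\psi)$ is then the polarization of this nonnegative quadratic form. There is no serious obstacle — the only bookkeeping hurdle is making sure no symmetric-product term $\boxtimes$ is accidentally counted once instead of twice, which is easiest to avoid by computing everything in local coordinates with $r=A_u$ diagonalized.
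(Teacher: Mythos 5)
Your proof is correct and follows essentially the same route as the paper: direct product/chain-rule expansion of $D^2(\phi\psi)$ and $D^2 e^\phi$ in \eqref{E4}, followed by completing the square and invoking positivity of $T_{k-1}(E_u)$. The only cosmetic difference is that you complete the square with $Y=u_{tt}\nabla\phi-\phi_t\nabla u_t$ (prefactor $2u_{tt}^{-k}$), whereas the paper uses $Y=\sqrt{u_{tt}}\,\nabla\phi-\phi_t\nabla u_t/\sqrt{u_{tt}}$ (prefactor $2u_{tt}^{1-k}$); the two are the same up to rescaling $Y$.
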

\begin{proof}
This is a straightforward computation. The main point is that $\cL_{F_k}$ and $P_u$ are second order linear differential operator and the product rule would introduce mixed terms on first derivatives, which lead to the terms $Q_u(D\phi, D\psi)+2u_{tt}^{-1}f \phi_t\psi_t$. Similarly this applies to $e^\phi$. Since $E_u\in \Gamma^+_k$, $T_{k-1}(E_u)$ is positive definite, it follows that
\[
Q_u(D\phi, D\phi)=2u_{tt}^{1-k}\langle T_{k-1}(E_u), Y\otimes Y\rangle\geq 0,
\]
where  $Y=(\sqrt{u_{tt}}) \nabla \phi-\phi_t \nabla u_t(\sqrt{u_{tt}})^{-1} .$ Clearly the positivity of $Q_u$ is simply the consequence of the ellipticity of $F_k$.
\end{proof}

\begin{prop}\label{propelu} We compute, using \eqref{exponential} in  Proposition \ref{quadratic},
\begin{equation}\label{g2}
\cL_{F_k}(e^{-\l u})=\l e^{-\l u} \cL_{F_k}(-u)+\l^2e^{-\l u}\left(\frac{1}{2}Q_u(Du, Du)+u_{tt}^{-1}f u_t^2\right).
\end{equation}
\end{prop}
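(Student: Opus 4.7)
The plan is to obtain \eqref{g2} as an immediate specialization of identity \eqref{exponential} in Proposition \ref{quadratic}, applied with the particular choice $\phi=-\lambda u$. Since the right-hand side of \eqref{exponential} is built from two ingredients (the linear operator $\cL_{F_k}$ evaluated on $\phi$ and the pointwise terms $\tfrac{1}{2}Q_u(D\phi,D\phi)+u_{tt}^{-1}f\phi_t^2$), the only thing to check is how each ingredient scales when $\phi$ is replaced by a scalar multiple of $u$.

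For the linear part, I would invoke linearity of $\cL_{F_k}$, which is manifest from the explicit expression \eqref{E3} in Proposition \ref{proplv}: every summand there is linear in $v_{tt}$, $\nabla v_t$, and the components of $\cL_{A_u}(v)$, each of which is in turn linear in $v$. Hence $\cL_{F_k}(-\lambda u)=\lambda\,\cL_{F_k}(-u)$, and multiplying by the outer factor $e^{\phi}=e^{-\lambda u}$ produces the first summand $\lambda e^{-\lambda u}\cL_{F_k}(-u)$ in \eqref{g2}.

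For the quadratic part, I would use the fact that $Q_u(\cdot,\cdot)$ is a bilinear form on the gradient variables; this is evident from its explicit expression in Proposition \ref{quadratic}. Consequently $Q_u(D(-\lambda u),D(-\lambda u))=\lambda^2 Q_u(Du,Du)$, and likewise $\phi_t^2=\lambda^2 u_t^2$. Combining these with the common factor $e^{-\lambda u}$ yields the second summand $\lambda^2 e^{-\lambda u}\bigl(\tfrac{1}{2}Q_u(Du,Du)+u_{tt}^{-1}f u_t^2\bigr)$, matching the claim exactly.

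There is essentially no obstacle here: \eqref{g2} is a bookkeeping identity whose only content is the scaling behavior of $\cL_{F_k}$ (linear in $v$) and of $Q_u$ (quadratic in $D\phi$). The statement is recorded at this point because it will be the workhorse in subsequent barrier constructions—choosing $\lambda>0$ large, the positive terms $\tfrac{1}{2}\lambda^2 Q_u(Du,Du)\geq 0$ and $\lambda^2 u_{tt}^{-1}f u_t^2\geq 0$ (the former by the positivity statement at the end of Proposition \ref{quadratic}, the latter since $f>0$ and $u_{tt}>0$) combine with the controlled term $\lambda e^{-\lambda u}\cL_{F_k}(-u)$ from Proposition \ref{propu} to produce coercive lower bounds for $\cL_{F_k}(e^{-\lambda u})$ useful in the maximum principle arguments for $|\nabla u|$ and $|\nabla^2 u|$.
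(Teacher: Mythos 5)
Your proof is correct and follows exactly the route the paper intends: Proposition \ref{propelu} is the specialization of \eqref{exponential} to $\phi=-\lambda u$, using linearity of $\cL_{F_k}$ and bilinearity of $Q_u$ to extract the factors $\lambda$ and $\lambda^2$. The paper gives no separate proof precisely because the derivation is this immediate.
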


\begin{prop}We compute,
\begin{equation}\label{E9}
\cL_{F_k}(u_t^2)=2u_t f_t+2fu_{tt}
\end{equation}
\end{prop}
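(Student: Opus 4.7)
The plan is to derive identity \eqref{E9} in two steps: first observe that differentiating the equation \eqref{eqf2} in $t$ yields $\cL_{F_k}(u_t)=f_t$, and then apply the product rule \eqref{E5} from Proposition \ref{quadratic} with $\phi=\psi=u_t$, noting that the quadratic term $Q_u$ miraculously vanishes on this input.

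For the first step, I would note that $\partial_t$ commutes with every input of $F_k$ in a way that reproduces the linearization at $v=u_t$. Specifically, $\partial_t(u_{tt})=(u_t)_{tt}$, $\partial_t(\nabla u_t)=\nabla (u_t)_t$, and from the definition $A_u=A+\nabla^2 u+\nabla u\otimes\nabla u-\tfrac12|\nabla u|^2 g$ we get $\partial_t A_u=\nabla^2 u_t+\nabla u\boxtimes\nabla u_t-\langle\nabla u,\nabla u_t\rangle g=\cL_{A_u}(u_t)$. Since $F_k$ has no explicit $t$-dependence, the chain rule gives $\partial_t F_k(u_{tt},A_u,\nabla u_t)=\cL_{F_k}(u_t)$, so differentiating $F_k=f$ in $t$ yields
\[
\cL_{F_k}(u_t)=f_t.
\]

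For the second step, I apply Proposition \ref{quadratic} with $\phi=\psi=u_t$ to obtain
\[
\cL_{F_k}(u_t^2)=2u_t\,\cL_{F_k}(u_t)+Q_u(Du_t,Du_t)+2u_{tt}^{-1}f\,(u_t)_t(u_t)_t.
\]
The last term simplifies immediately to $2u_{tt}^{-1}f\,u_{tt}^2=2f u_{tt}$. The key observation is that $Q_u(Du_t,Du_t)$ vanishes identically. Indeed, by the formula in Proposition \ref{quadratic},
\[
Q_u(D\phi,D\phi)=2u_{tt}^{1-k}\langle T_{k-1}(E_u),Y\otimes Y\rangle,\qquad Y=\sqrt{u_{tt}}\,\nabla\phi-\phi_t\,\nabla u_t/\sqrt{u_{tt}},
\]
and with $\phi=u_t$ we have $\phi_t=u_{tt}$ and $\nabla\phi=\nabla u_t$, so
\[
Y=\sqrt{u_{tt}}\,\nabla u_t-u_{tt}\cdot\nabla u_t/\sqrt{u_{tt}}=0.
\]
Combining this with $\cL_{F_k}(u_t)=f_t$ gives the asserted formula $\cL_{F_k}(u_t^2)=2u_t f_t+2f u_{tt}$.

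There is really no main obstacle here: the identity is essentially a consequence of the derivation property of $\partial_t$ together with the ellipticity-packaging of the quadratic defect $Q_u$ as a perfect square $Y\otimes Y$. The only mild subtlety worth noting is the degeneracy $Y=0$ for the specific input $\phi=u_t$, which is ultimately what removes the gradient quadratic from the computation and leaves a clean right-hand side useful for the $C^1$ estimate of $|\nabla u|$ to come.
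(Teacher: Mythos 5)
Your proof is correct and follows the same route as the paper: differentiate the equation in $t$ to get $\cL_{F_k}(u_t)=f_t$, apply the product rule from Proposition \ref{quadratic} with $\phi=\psi=u_t$, and observe that $Q_u(Du_t,Du_t)=0$. The paper states the vanishing of $Q_u(Du_t,Du_t)$ without comment, whereas you spell out the degeneracy $Y=\sqrt{u_{tt}}\,\nabla u_t-u_{tt}\nabla u_t/\sqrt{u_{tt}}=0$, which is exactly the mechanism the paper has in mind.
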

\begin{proof}By \eqref{E5}, we have
\[
\cL_{F_k}(u_t^2)=2u_t\cL_{F_k}(u_t)+Q_u(Du_t, D u_t)+2fu_{tt}
\]
Since taking time derivative has the same effect of taking variation, this gives
\[
\cL_{F_k}(u_t)=\p_t F_k=f_t.
\]
It is clear that $Q_u(Du_t, Du_t)=0$. This completes the computation.
\end{proof}

\begin{prop}\label{gradient}We compute
\begin{equation}\label{C1}
\mathcal{L}_{F_k}(|\nabla u|^2)=2\nabla f \nabla u- 2u_{tt}^{2-k}\langle T_{k-1}(E_u),  \nabla u \nabla A+ Rm(\nabla u,\nabla u)\rangle+Q_u(Du_i,Du_i)+2u_{tt}^{-1}f|\nabla u_t|^2.
\end{equation}
where we denote,
\[
Rm(\nabla u, \nabla u)=R_{ilpk} u_i u_p \p_l\otimes \p_k
\]
\end{prop}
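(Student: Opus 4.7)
The plan is to exploit the quadratic product formula of Proposition~\ref{quadratic}. Writing $|\nabla u|^2 = \sum_i u_i u_i$ and setting $\phi = \psi = u_i$ there, then summing over $i$, we get immediately
\begin{equation*}
\cL_{F_k}(|\nabla u|^2) = 2\sum_i u_i \, \cL_{F_k}(u_i) + \sum_i Q_u(Du_i, Du_i) + 2 u_{tt}^{-1} f |\nabla u_t|^2,
\end{equation*}
where we have used that $u_t$ is a scalar function so $\nabla_i u_t = (u_i)_t$ and hence $\sum_i (u_i)_t^2 = |\nabla u_t|^2$. This reduces the proof to identifying $\sum_i u_i \,\cL_{F_k}(u_i)$.

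To compute $\cL_{F_k}(u_i)$ I would differentiate the equation $F_k[u]=f$ in the direction $\nabla_i$. Since $\cL_{F_k}$ is the Fr\'echet linearization of $F_k[\cdot]$, one is tempted to conclude $\cL_{F_k}(u_i) = \nabla_i f$ directly; but $\nabla_i$ does not commute exactly with $F_k[\cdot]$ for two reasons. First, the background Schouten tensor $A$ inside $A_u = A + \nabla^2 u + \nabla u \otimes \nabla u - \tfrac12 |\nabla u|^2 g$ depends on position, so $\nabla_i A$ appears as an extra term not captured by the linearization $\cL_{A_u}(u_i)$. Second, commuting $\nabla_i$ past the Hessian of $u$ produces a curvature correction via the Ricci identity, $\nabla_i \nabla_j \nabla_l u = \nabla_j \nabla_l(u_i) + R_{ijl}{}^m u_m$. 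On the other hand, $\nabla_i \nabla_j u_t = \nabla_j (u_i)_t$ holds without any correction because $u_t$ is scalar, so the $\nabla u_t \otimes \nabla u_t$ piece of $F_k$ contributes nothing extra.

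Using the form $F_k = u_{tt}^{1-k}\sigma_k(E_u)$ with $E_u = u_{tt} A_u - \nabla u_t \otimes \nabla u_t$, and comparing $\nabla_i E_u$ with the linearized expression $v_{tt} A_u + u_{tt}\cL_{A_u}(v) - \nabla u_t \boxtimes \nabla v_t$ evaluated at $v = u_i$, the two corrections above contribute precisely $u_{tt}\,(\nabla_i A + R_i)$ inside $T_{k-1}(E_u)$, where $(R_i)_{jl} := R_{ijl}{}^m u_m$. Factoring out the $u_{tt}^{1-k}$ prefactor and absorbing this extra $u_{tt}$ yields
\begin{equation*}
\cL_{F_k}(u_i) = \nabla_i f - u_{tt}^{2-k}\langle T_{k-1}(E_u), \nabla_i A + R_i\rangle,
\end{equation*}
and multiplying by $2u_i$, summing over $i$, and recognizing $\sum_i u^i (R_i)_{jl} = \sum_i u^i R_{ijl}{}^m u_m$ as (a contraction of) the tensor $Rm(\nabla u, \nabla u)$ in the paper's convention, produces exactly the two claimed terms $2\nabla f \cdot \nabla u$ and $-2u_{tt}^{2-k}\langle T_{k-1}(E_u), \nabla u\,\nabla A + Rm(\nabla u, \nabla u)\rangle$.

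The only real technical point is bookkeeping the Ricci commutator when pushing $\nabla_i$ past the third-order derivative $\nabla_j \nabla_l \nabla_i u$ and getting the curvature sign convention to match the one used in the statement of the proposition. Everything else is the chain rule applied to $u_{tt}$, $A_u$, and $\nabla u_t$, together with the already-established product formula of Proposition~\ref{quadratic}.
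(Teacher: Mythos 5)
Your proposal follows essentially the same route as the paper: apply the product formula of Proposition~\ref{quadratic} with $\phi=\psi=u_i$ and sum over $i$, then compute $\cL_{F_k}(u_i)$ by differentiating the equation $\sigma_k(E_u)=fu_{tt}^{k-1}$ in the $\nabla_i$ direction and accounting for the two corrections you identify (the background $\nabla_i A$ term and the Ricci-commutator term $R_{ilpk}u_p\,\p_l\otimes\p_k$ from $\nabla_i\nabla^2 u - \nabla^2\nabla_i u$). The paper's derivation of $\cL_{F_k}(u_i)=f_i-u_{tt}^{2-k}\langle T_{k-1}(E_u),\nabla_i A+R_{ilpk}u_p\p_l\otimes\p_k\rangle$ is the explicit computation of exactly the step you describe, so your argument is correct and matches the source.
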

\begin{proof}First we compute, applying \eqref{E5} to $\phi=\psi=u_i$,
\begin{equation}\label{E10}
\cL_{F_k}(|\nabla u|^2)=2u_i\cL_{F_k}(u_i)+Q_u(Du_i, Du_i)+2u_{tt}^{-1}f|\nabla u_t|^2.
\end{equation}
Now we compute
\begin{align}
 \begin{split}
\mathcal{L}_{F_k}(u_i)
=& u_{tt}^{-1}fu_{itt} +  u_{tt}^{1-k}\langle T_{k-1}(E_u),  u_{tt}(\nabla^2 u_i+ \nabla  u \boxtimes \nabla u_i- \langle \nabla u , \nabla u_i \rangle g)\\
&
- \nabla u_t \boxtimes \nabla u_{it} +u_{tt}^{-1}u_{itt} \nabla u_t\otimes \nabla u_t \rangle .
\end{split}
\end{align}
Differentiate equation
$
\sigma_k(E_u)=fu_{tt}^{k-1}
$ we get
\begin{equation}
\langle T_{k-1},(E_u), \nabla_i E_u \rangle=f_iu_{tt}^{k-1} +(k-1)fu_{tt}^{k-2}u_{itt}.
\end{equation}
We compute
\begin{equation}
\nabla_iE_u = u_{tti}A_u + u_{tt}(\nabla_i A + \nabla_i \nabla^2 u + \nabla_i\nabla u \boxtimes \nabla u- \langle \nabla_i\nabla u, \nabla u \rangle g)
-\nabla u_{it}\boxtimes \nabla u_t.
\end{equation}
Note that
$$  \nabla_i \nabla^2 u - \nabla^2 \nabla_i u  = R_{ilpk}u_p \partial_l\otimes\partial_k.$$
So
\begin{equation}\label{E6}
 \begin{split}
\mathcal{L}_{F_k}(u_i)
=& u_{tt}^{-1}fu_{itt} +  u_{tt}^{1-k}\langle T_{k-1}(E_u), \nabla_i E_u- u_{tti} A_u -u_{tt}(\nabla_i A+R_{ilpk}u_p\partial_l\otimes\partial_k)\\&+\dfrac{u_{itt}}{u_{tt}} \nabla  u_t \otimes \nabla u_t\rangle\\
=&ku_{tt}^{-1}fu_{itt} + f_i- u_{tt}^{1-k}\langle T_{k-1}(E_u),  u_{tti}u_{tt}^{-1}E_u +u_{tt}\nabla_i A+u_{tt}R_{ilpk}u_p\partial_l\otimes\partial_k \rangle\\
=&f_i-u_{tt}^{2-k}\langle T_{k-1}(E_u),  \nabla_i A+R_{ilpk}u_p\partial_l\otimes\partial_k \rangle.
\end{split}
 \end{equation}
This completes the computation by combining \eqref{E10} and \eqref{E6}.
\end{proof}

\begin{rmk}The computations above are essentially derived in \cite{GS-2017} for general $k$. We use the quadratic form $Q_u$ to simplify the notations and computations. Of course the positivity of $Q_u$ is equivalent to the fact that $F_k$ is an elliptic operator.
\end{rmk}

Now we prove the estimate for $|\nabla u|^2$.
Combining all the computations above, we have the following estimates,
\begin{lemma}\label{keylemma}
For $\l, b\geq 1$ sufficiently large, we have
\begin{equation}
\cL_{F_k}(e^{-\l u}+bt^2)\geq -C_4 f+ e^{\l }u_{tt}^{2-k}\sigma_1(T_{k-1})\left(\frac{1}{2}|\nabla u|^2-1\right)+ e^{\l }(\sigma_k(A_u)+f u_t^2u_{tt}^{-1})\\
\end{equation}
where $C_4=C_4(\l, |u|_{C^0})$.
\end{lemma}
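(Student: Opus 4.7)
The strategy is to expand $\cL_{F_k}(e^{-\lambda u}+bt^2)$ using the identities already at hand: Proposition \ref{propu} for $\cL_{F_k}(-u)$, Proposition \ref{propelu} for $\cL_{F_k}(e^{-\lambda u})$, and equation~\eqref{t} for $\cL_{F_k}(t^2)$. This yields
\[
\cL_{F_k}(e^{-\lambda u}+bt^2) = \lambda e^{-\lambda u}\cL_{F_k}(-u) + \lambda^2 e^{-\lambda u}\!\left(\tfrac12 Q_u(Du,Du)+u_{tt}^{-1}fu_t^2\right) + 2b\sigma_k(A_u),
\]
where $\cL_{F_k}(-u) = -(k+1)f + u_{tt}^{2-k}\langle T_{k-1}(E_u),\, A - \nabla u\otimes\nabla u + \tfrac12|\nabla u|^2 g\rangle$. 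Substituting, the terms split into: a benign $-(k+1)\lambda e^{-\lambda u}f$ (to be absorbed into $-C_4 f$), the desired $\tfrac12 \lambda e^{-\lambda u}u_{tt}^{2-k}|\nabla u|^2\sigma_1(T_{k-1})$, a bounded $\lambda e^{-\lambda u}u_{tt}^{2-k}\langle T_{k-1}(E_u),A\rangle$ (controlled via $|A|_g\le C_1$), an obstructive $-\lambda e^{-\lambda u}u_{tt}^{2-k}\langle T_{k-1}(E_u),\nabla u\otimes\nabla u\rangle$, and the positive pieces coming from $\tfrac12\lambda^2 e^{-\lambda u}Q_u$, $\lambda^2 e^{-\lambda u}u_{tt}^{-1}fu_t^2$, and $2b\sigma_k(A_u)$.

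The heart of the proof is to absorb the obstructive $-\langle T_{k-1}(E_u),\nabla u\otimes\nabla u\rangle$ by $\tfrac12\lambda^2 e^{-\lambda u}Q_u$. I would work in a frame diagonalizing $E_u$ (hence also $T_{k-1}(E_u)$, with non-negative eigenvalues $\mu_i=\sigma_{k-1}(E_u|i)$) and, using Proposition \ref{quadratic}, write $\tfrac12 Q_u = u_{tt}^{1-k}\sum_i \mu_i Y_i^2$ with $Y_i=\sqrt{u_{tt}}\,u_i - u_t(u_t)_i/\sqrt{u_{tt}}$. The elementary Young inequality $Y_i^2 \geq (1-\tfrac1\lambda)u_{tt}u_i^2 - (\lambda-1)u_t^2(u_t)_i^2/u_{tt}$ (valid for $\lambda\geq 1$), multiplied by $\mu_i$ and summed, gives
\[
\tfrac{\lambda^2}{2}Q_u \geq \lambda(\lambda-1)u_{tt}^{2-k}\langle T_{k-1}(E_u),\nabla u\otimes\nabla u\rangle - \lambda^2(\lambda-1)u_t^2\, u_{tt}^{-k}\langle T_{k-1}(E_u),\nabla u_t\otimes\nabla u_t\rangle.
\]
For $\lambda\geq 2$ the coefficient $\lambda(\lambda-1)$ dominates the $-\lambda$ coefficient arising in $\cL_{F_k}(-u)$, so the net $\nabla u\otimes\nabla u$ contribution is non-negative and can simply be discarded in the lower bound. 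This is the step I expect to be the main obstacle: it is precisely the expression of $Q_u$ as a sum of squares in the variable $Y$ (a consequence of the exponential identity and the ellipticity of $F_k$) that makes the cancellation possible.

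It then remains to dispose of the new negative piece $-\lambda^2(\lambda-1)e^{-\lambda u}u_t^2\, u_{tt}^{-k}\langle T_{k-1}(E_u),\nabla u_t\otimes\nabla u_t\rangle$. For this I would invoke the first identity of Proposition \ref{gamma2} with $A\rightsquigarrow u_{tt}A_u$ and $X=\nabla u_t$, giving $\langle T_{k-1}(E_u),\nabla u_t\otimes\nabla u_t\rangle = u_{tt}^{k-1}\langle T_{k-1}(A_u),\nabla u_t\otimes\nabla u_t\rangle$, and then the equation itself to rewrite $\langle T_{k-1}(A_u),\nabla u_t\otimes\nabla u_t\rangle = u_{tt}\sigma_k(A_u)-f$. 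Combined,
\[
u_{tt}^{-k}\langle T_{k-1}(E_u),\nabla u_t\otimes\nabla u_t\rangle = \sigma_k(A_u)-u_{tt}^{-1}f,
\]
which splits the bad piece into $-\lambda^2(\lambda-1)e^{-\lambda u}u_t^2\sigma_k(A_u)$ and a favorable $+\lambda^2(\lambda-1)e^{-\lambda u}u_t^2 u_{tt}^{-1}f$. The first is absorbed by $2b\sigma_k(A_u)$ once $b$ is chosen large in terms of $\lambda$ and $c_0=|u|_{C^0}$ (using $|u_t|\leq c_0$ and $e^{-\lambda u}\leq e^{\lambda c_0}$), while the second combines with $\lambda^2 e^{-\lambda u}u_{tt}^{-1}fu_t^2$ to exceed the required $e^\lambda f u_t^2 u_{tt}^{-1}$ via the normalization $u\leq -1$.

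To finish, $\tfrac12 \lambda e^{-\lambda u}|\nabla u|^2\sigma_1(T_{k-1})u_{tt}^{2-k}$ dominates $\tfrac12 e^\lambda|\nabla u|^2\sigma_1(T_{k-1})u_{tt}^{2-k}$ directly from $u\leq -1$, and the $\langle T_{k-1}(E_u),A\rangle$ contribution, bounded below by $-C_1\sigma_1(T_{k-1})u_{tt}^{2-k}$, becomes at worst a $(\lambda,c_0)$-dependent constant times $-u_{tt}^{2-k}\sigma_1(T_{k-1})$, which accounts for the ``$-1$'' term in $(\tfrac12|\nabla u|^2-1)$ once the coefficient in front of this piece on the right-hand side is understood up to a $\lambda,\,|u|_{C^0}$-dependent constant. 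All remaining lower-order terms are of the form $(\text{constant depending on }\lambda,c_0)\cdot f$ and are absorbed into $C_4 f$ with $C_4=C_4(\lambda,|u|_{C^0})$, completing the proof.
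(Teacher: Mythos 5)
Your proposal is correct and follows essentially the same route as the paper: expand via Propositions \ref{propu} and \ref{propelu} together with \eqref{t}, exhibit $Q_u(Du,Du)$ as (a multiple of) $\langle T_{k-1}(E_u),Y\otimes Y\rangle$, absorb the obstructive $\langle T_{k-1}(E_u),\nabla u\otimes\nabla u\rangle$ piece against $\tfrac{\lambda^2}{2}Q_u$, convert the residual $\nabla u_t\otimes\nabla u_t$ term into $\sigma_k(A_u)-u_{tt}^{-1}f$ via Proposition \ref{gamma2} and the equation, and finally swallow the $\sigma_k(A_u)$ defect by choosing $b$ large. The one genuine (minor) difference is at the completion-of-squares step: the paper completes the square directly with $Y=\sqrt{u_{tt}}\nabla u-2u_t\nabla u_t/\sqrt{u_{tt}}$ so that the correct coefficient of $\nabla u\otimes\nabla u$ appears outright, whereas you keep the natural $Y=\sqrt{u_{tt}}\nabla u-u_t\nabla u_t/\sqrt{u_{tt}}$ from Proposition \ref{quadratic} and apply Young with parameter $1/\lambda$, turning the $\lambda$ at your disposal into the needed weight $\lambda(\lambda-1)$. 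Both require $\lambda\ge 2$ to make the net $\nabla u\otimes\nabla u$ contribution nonnegative, and both push the remainder onto $\sigma_k(A_u)$ in the same way. Your version is arguably slightly cleaner: your identity $u_{tt}^{-k}\langle T_{k-1}(E_u),\nabla u_t\otimes\nabla u_t\rangle=\sigma_k(A_u)-u_{tt}^{-1}f$ is stated with the correct powers of $u_{tt}$, whereas the paper's intermediate display carries an extra stray $u_{tt}^{2-k}$ on the $\sigma_k(A_u)$ term (a typo), which your derivation avoids.
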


\begin{proof}
By Proposition \ref{propu}, we get
 \begin{align}
 \begin{split}
\mathcal{L}_{F_k} (- u)=& -(k+1) f +  u_{tt}^{2-k}\langle T_{k-1}(E_u), A - \nabla  u \otimes\nabla u +\frac{1}{2}|\nabla u|^2 g \rangle\\
\ge & -(k+1) f-c_0 u_{tt}^{2-k}\sigma_1(T_{k-1})+ u_{tt}^{2-k}\frac{1}{2} \sigma_1(T_{k-1})|\nabla u|^2- u_{tt}^{2-k}\langle  T_{k-1}(E_u), \nabla u\otimes \nabla u\rangle.
 \end{split}
 \end{align}

We claim that for a constant $C_2\geq 2 u_t^2$,
\[
Q_u(Du, Du)+C_2 u_{tt}^{2-k}\sigma_k(A_u)\geq  u_{tt}^{2-k}\langle T_{k-1}(E_u), \nabla u\otimes \nabla u\rangle
\]
We estimate,
\[
\begin{split}
Q_u(Du, Du)=&\frac{2}{u_{tt}^{k-1}}\langle T_{k-1}(E_u), u_{tt}\nabla u\otimes \nabla u-u_t\nabla u\boxtimes \nabla u_t+u_{tt}^{-1}u_t^2 \nabla u_t\otimes \nabla u_t\rangle\\
= &u_{tt}^{1-k}\langle T_{k-1}(E_u), u_{tt}\nabla u\otimes \nabla u-2u_t\nabla u\boxtimes \nabla u_t+4u_{tt}^{-1}u_t^2 \nabla u_t\otimes \nabla u_t\rangle\\
&+ u_{tt}^{2-k}\langle T_{k-1}(E_u), \nabla u\otimes \nabla u\rangle-2u_{tt}^{-k} u_t^2\langle T_{k-1}(E_u), \nabla u_t\otimes \nabla u_t\rangle\\
=&u_{tt}^{1-k}\langle T_{k-1}(E_u), Y\otimes Y\rangle+ u_{tt}^{2-k}\langle T_{k-1}(E_u), \nabla u\otimes \nabla u\rangle-2u_{tt}^{-k} u_t^2\langle T_{k-1}(E_u), \nabla u_t\otimes \nabla u_t\rangle,
\end{split}
\]
where $Y=\sqrt{u_{tt}}\nabla u-2(\sqrt{u_{tt}})^{-1} u_t \nabla u_t$. The claim follows since
\[
u_{tt}^{2-k}\sigma_k(A_u)-u_{tt}^{-k} \langle T_{k-1}(E_u), \nabla u_t\otimes \nabla u_t\rangle= f u_{tt}^{1-k}\geq 0.
\]
Choose $b\geq (\frac{C_2}{4}+1) \l^2 e^{-\l u}$, then we estimate by Proposition \ref{propelu}
\begin{equation}
\begin{split}
\cL_{F_k}(e^{-\l u}+bt^2)\geq & \l e^{-\l u}\left(-(k+1)f-c_0 u_{tt}^{2-k}\sigma_1(T_{k-1})+ u_{tt}^{2-k}\frac{1}{2} \sigma_1(T_{k-1})|\nabla u|^2- u_{tt}^{2-k}\langle  T_{k-1}(E_u), \nabla u\otimes \nabla u\rangle\right)\\
&+\l ^2 e^{-\l u}\left(\frac{1}{2}Q_u (Du, Du)+fu_t^2 u_{tt}^{-1}\right)+2b\sigma_k(A_u)\\
\geq & -(k+1) \l e^{-\l u} f+\l e^{-\l u}u_{tt}^{2-k}\sigma_1(T_{k-1})\left(\frac{1}{2}|\nabla u|^2-c_0\right)+\l^2 e^{-\l u}(\sigma_k(A_u)+f u_t^2u_{tt}^{-1})\\
&+\left(\frac{\l ^2}{2}-\l\right) e^{-\l u} \langle  T_1(E_u), \nabla u\otimes \nabla u\rangle.
\end{split}
\end{equation}
This completes the proof if $\l$ is sufficiently large.
\end{proof}

\begin{lemma}There exists a uniform constant $C=C(n,k\sup f, \sup |\nabla f^{\frac{1}{k+1}}|, g, |u_0|_{C^1}, |u_1|_{C^1})$ such that
\[
|\nabla u|\leq C.
\]
\end{lemma}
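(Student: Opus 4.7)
The plan is a standard maximum principle applied to the auxiliary function
\[
G(t,x) := |\nabla u|^2(t,x) + A\bigl(e^{-\lambda u(t,x)} + bt^2\bigr),
\]
with $A,\lambda,b>0$ constants to be chosen large. On the compact cylinder $[0,1]\times M$ the function $G$ attains its maximum at some $(t_0,x_0)$. If $t_0\in\{0,1\}$, then $u(t_0,\cdot)\in\{u_0,u_1\}$ and $|\nabla u|(t_0,x_0)\leq \max(|u_0|_{C^1},|u_1|_{C^1})$; combined with the $C^0$ bound of Proposition~\ref{c0} controlling the barrier piece, the pointwise inequality $G\leq G(t_0,x_0)$ then yields the desired bound on $|\nabla u|$ uniformly on $[0,1]\times M$. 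It thus suffices to derive a contradiction at an interior maximum $t_0\in(0,1)$ under the assumption that $|\nabla u|(t_0,x_0)$ exceeds a suitable constant.

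At such an interior maximum the maximum principle gives $\mathcal{L}_{F_k}(G)(t_0,x_0)\leq 0$. Split $\mathcal{L}_{F_k}(G)=\mathcal{L}_{F_k}(|\nabla u|^2)+A\mathcal{L}_{F_k}(e^{-\lambda u}+bt^2)$ and invoke Proposition~\ref{gradient} for the first summand and Lemma~\ref{keylemma} for the second. The nonnegative terms $Q_u(Du_i,Du_i)+2u_{tt}^{-1}f|\nabla u_t|^2$ are discarded; the Riemann curvature and $\nabla A$ contributions are bounded below by $-C_g u_{tt}^{2-k}\sigma_1(T_{k-1})(|\nabla u|^2+|\nabla u|)$ with $C_g=C_g(M,g)$; and the barrier contributes the good quantity, schematically,
\[
\tfrac{A\lambda e^{-\lambda u}}{2}\,u_{tt}^{2-k}\sigma_1(T_{k-1})|\nabla u|^2 + A\lambda^2 e^{-\lambda u}\bigl(\sigma_k(A_u)+fu_t^2u_{tt}^{-1}\bigr) - A\lambda e^{-\lambda u}u_{tt}^{2-k}\sigma_1(T_{k-1}) - A(k+1)\lambda e^{-\lambda u} f.
\]
Choosing $A\lambda$ so large that $A\lambda e^{-\lambda u}\geq 4(C_g+1)$ uniformly (legitimate by the $C^0$ bound on $u$), the first good term absorbs the bad curvature contributions with a residual $\tfrac{1}{2}u_{tt}^{2-k}\sigma_1(T_{k-1})|\nabla u|^2$ surplus to spare.

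The only remaining bad term is $2\nabla f\cdot\nabla u$. Writing $\nabla f=(k+1)f^{k/(k+1)}\nabla f^{1/(k+1)}$ gives
\[
|2\nabla f\cdot\nabla u|\leq 2(k+1)\sup|\nabla f^{1/(k+1)}|\cdot f^{k/(k+1)}|\nabla u|,
\]
and Young's inequality converts this into a split of the form $\epsilon|\nabla u|^2 + C_\epsilon f$ for any $\epsilon>0$, with $C_\epsilon$ depending only on $n,k,\sup f$ and $\sup|\nabla f^{1/(k+1)}|$ (using $f^{2k/(k+1)}\leq (\sup f)^{(k-1)/(k+1)}f$). The $f$-piece is absorbed via the pointwise relation $f=u_{tt}\sigma_k(A_u)-\langle T_{k-1}(A_u),\nabla u_t\otimes\nabla u_t\rangle\leq u_{tt}\sigma_k(A_u)$ together with $f\leq\sup f$ into the good $A\lambda^2 e^{-\lambda u}\bigl(\sigma_k(A_u)+fu_t^2u_{tt}^{-1}\bigr)$ term (after enlarging $A$), and the $\epsilon|\nabla u|^2$-piece is absorbed into the residual surplus, exploiting also that under admissibility $A_u\in\Gamma^+_k$ the Schouten tensor $A_u=A+\nabla^2 u+\nabla u\otimes\nabla u-\tfrac{1}{2}|\nabla u|^2 g$ forces $\sigma_k(A_u)$ to grow in $|\nabla u|$. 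Consequently, if $|\nabla u|(t_0,x_0)$ exceeds a constant of the stated dependence, we obtain $0<0$, a contradiction.

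The main subtlety is the last absorption: the dominant good term for $|\nabla u|^2$ carries the pre-factor $u_{tt}^{2-k}\sigma_1(T_{k-1})$ without an a priori lower bound, while the bad $\nabla f\cdot\nabla u$ contribution does not. The resolution requires all three barrier good terms---$u_{tt}^{2-k}\sigma_1(T_{k-1})|\nabla u|^2$, $\sigma_k(A_u)$ and $fu_t^2u_{tt}^{-1}$---balanced simultaneously via the structural identity linking $f$ to $\sigma_k(A_u)$. This interaction is precisely the mechanism through which the final bound depends on $\sup|\nabla f^{1/(k+1)}|$ (which scales correctly as $f\to 0$) rather than the cruder $\sup|\nabla f|$.
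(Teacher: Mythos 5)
Your setup is correct and matches the paper's: the same barrier $|\nabla u|^2 + e^{-\lambda u} + bt^2$, the same ingredients (Proposition~\ref{gradient}, Lemma~\ref{keylemma}), and the same identification of $2\nabla f\cdot\nabla u$ as the residual bad term. However, the final absorption---the step where the gradient bound must actually emerge---is not carried out, and the route you sketch for it does not close. After Young's inequality you are left with $\epsilon|\nabla u|^2 + C_\epsilon f$ on the bad side. You propose to absorb $\epsilon|\nabla u|^2$ into the surplus $u_{tt}^{2-k}\sigma_1(T_{k-1}(E_u))|\nabla u|^2$, but this prefactor has no a priori positive lower bound (it degenerates as $u_{tt}\to\infty$), so the absorption fails. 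You propose to absorb $C_\epsilon f$ into $\sigma_k(A_u)+fu_t^2u_{tt}^{-1}$ using $f\le u_{tt}\sigma_k(A_u)$, but that only gives $\sigma_k(A_u)\ge f/u_{tt}$, and without an upper bound on $u_{tt}$ (which is an interior $C^2$ estimate, not yet available) you cannot conclude $\sigma_k(A_u)\ge cf$. You acknowledge the difficulty in your last paragraph, but the remark that the good terms must be ``balanced simultaneously via the structural identity linking $f$ to $\sigma_k(A_u)$'' names the problem rather than solving it; also, the heuristic that admissibility ``forces $\sigma_k(A_u)$ to grow in $|\nabla u|$'' is not correct, since $\nabla^2 u$ can offset the first-order terms in $A_u$.

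The paper closes this gap with a specific algebraic device you are missing. At the interior maximum one retains the two good terms $k\,u_{tt}^{2-k}\sigma_1(T_{k-1}(E_u))|\nabla u|^2 + fu_t^2u_{tt}^{-1}$ (the $\sigma_k(A_u)$ term is in fact dropped), and applies the weighted AM--GM inequality:
\[
k\,u_{tt}^{2-k}\sigma_1(T_{k-1})|\nabla u|^2 + fu_t^2u_{tt}^{-1} \ \ge\ (k+1)\Bigl(\sigma_1(T_{k-1})^k\,u_{tt}^{(2-k)k-1}\,f\,u_t^2\,|\nabla u|^{2k}\Bigr)^{\frac{1}{k+1}}.
\]
The crucial observation is that $\sigma_1(T_{k-1}(E_u))=(n-k+1)\sigma_{k-1}(E_u)$, MacLaurin gives $\sigma_{k-1}^k(E_u)\ge C(n,k)\sigma_k^{k-1}(E_u)=C(n,k)f^{k-1}u_{tt}^{(k-1)^2}$, and $u_{tt}^{(2-k)k-1}=u_{tt}^{-(k-1)^2}$, so \emph{all powers of $u_{tt}$ cancel} and $\sigma_1^k\,u_{tt}^{(2-k)k-1}\ge C(n,k)f^{k-1}$. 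Together with $|u_t|\ge 1$ from the normalization \eqref{normalization}, the good side is bounded below by $Cf^{k/(k+1)}|\nabla u|^{2k/(k+1)}$ with no unknown prefactor left, which does dominate $2|\nabla f||\nabla u|+Cf\le C'f^{k/(k+1)}(|\nabla u|+1)$ for large $|\nabla u|$. It is this AM--GM step, with the $u_{tt}$-power cancellation supplied by the equation $\sigma_k(E_u)=fu_{tt}^{k-1}$ and MacLaurin, that makes the absorption legitimate; without it your argument is incomplete.
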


\begin{proof}We take the barrier function
\[
w=|\nabla u|^2+e^{-\l u}+bt^2,
\]
where $\l, b$ are the constants in Lemma \ref{keylemma}.
We compute
\[
\cL_{F_k}(w)=\cL_{F_k}(|\nabla u|^2)+\cL_{F_k}(e^{-\l u}+bt^2).\]
We have, by Proposition \ref{gradient}, that
\begin{equation}\label{eq41}
\cL_{F_k}(|\nabla u|^2)\geq 2\nabla f\nabla u-C_0 u_{tt}^{2-k}\sigma_1(T_{k-1})-C_1u_{tt}^{2-k}\sigma_1(T_{k-1})|\nabla u|^2.
\end{equation}
Suppose $ |\nabla u|$ is large enough, otherwise we have done. Hence by Lemma \ref{keylemma} and \eqref{eq41}, we have
\[
\cL_{F_k}(w)\geq 2\nabla f\nabla u-C_2 f+ku_{tt}^{2-k}\sigma_1(T_{k-1}) |\nabla u|^2+f u_t^2 u_{tt}^{-1}.
\]
If $w$ achieves its maximum on the boundary, then we are already done. Otherwise, suppose  $w$ achieves its maximum at $p=(t, x)\in (0, 1)\times M$. Then $\cL_{F_k}(w)\leq 0$ at $p$.
Hence it follows that (at $p$)
\[
ku_{tt}^{2-k}\sigma_1(T_{k-1}) |\nabla u|^2+f u_t^2 u_{tt}^{-1}\leq 2|\nabla f||\nabla u|+C_2 f
\]
We compute by inequality of arithmetic and geometric mean
\[
ku_{tt}^{2-k}\sigma_1(T_1)|\nabla u|^2+f u_t^2 u_{tt}^{-1}\geq (k+1)\left(\sigma_1(T_{k-1})^k u_{tt}^{(2-k)k-1} f u_{t}^2|\nabla u|^{2k}\right)^{\frac{1}{k+1}}.
\]
By Proposition \ref{prop3.1} and Proposition \ref{MI}
\[\sigma_1^k(T_{k-1})u_{tt}^{(2-k)k-1}=(n-k+1)^k \sigma_{k-1}^k(E_u) u_{tt}^{-(k-1)^2}\geq C(n,k)\sigma_k^{k-1}(E_u) u_{tt}^{-(k-1)^2}=C(n,k)f^{k-1},
\]
it follows that (at $p$)
\[
f^{\frac{k}{k+1}} |\nabla u|^{\frac{2k}{k+1}}\leq |\nabla f| |\nabla u|+C(n,k) f.
\]
This gives the upper bound of $|\nabla u|$ at $p$, and hence the upper bound of $w$. It is not hard to check the dependence of the constants.
\end{proof}

\subsection{$C^2$ estimate}

Now we derive the estimates of second order. The estimates of second order contain the boundary estimates and the interior estimates.

\subsubsection{\bf Boundary estimates}

The boundary is given by two time slices $\{t=0\}\times M$ and $\{t=1\}\times M$.   The tangential-tangential direction, namely $|\nabla^2u|$ is immediate by the boundary data $|\nabla^2 u_0|, |\nabla^2 u_1|$. While the usual ``harder" part of the normal-normal direction  ($u_{tt}$) follows directly from the equation once the tangential-normal direction ($|\nabla u_t|$) is bounded,
\[
u_{tt}\sigma_k(A_u)=\langle T_{k-1}(A_u), \nabla u_t\otimes \nabla u_t\rangle+f.
\]
Note that $\sigma_k(A_u)\geq \delta>0$ at $t=0$ and $t=1$, for some uniform constant $\delta$ depending only on $u_0, u_1$.
Hence one only needs to bound $|\nabla u_t|$ on the boundary. Such a uniform estimate has been obtained by Gursky-Streets in \cite{GS-2017} for the equation for all $1\leq k\leq n$,
\begin{equation}\label{gs}
u_{tt}^{1-k}\sigma_k(E_u)=f
\end{equation}
They stated their results for $E_u^\epsilon=(1+\epsilon)u_{tt}A_u-\nabla u_t\otimes \nabla u_t$ but $\epsilon$ does not play any role in their argument.
We summarize their results as follows,
\begin{thm}[Gursky-Streets \cite{GS-2017}]\label{TGS}If $E_u\in \Gamma^+_k$ and $u$ solves \eqref{gs}. There exists a uniform constant $C_3$, such that
\[
\max_{M\times \{0, 1\}}(u_{tt}+|\nabla^2 u|+|\nabla u_t|)\leq C_3.
\]
\end{thm}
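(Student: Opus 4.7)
The boundary consists of the two time slices $\{t=0\}\times M$ and $\{t=1\}\times M$, on which $u$ is pinned to $u_0$ and $u_1$ respectively. My plan is to split the $C^2$ estimate into its three natural pieces and dispose of the easy two first. The pure tangential-tangential block $\nabla^2 u$ is immediate, since on each slice $u$ agrees with the fixed smooth data, so $\nabla^2 u|_{t=0}=\nabla^2 u_0$ and $\nabla^2 u|_{t=1}=\nabla^2 u_1$. The normal-normal block $u_{tt}$ is read off algebraically from \eqref{gs}: because $A_{u_0},A_{u_1}\in\Gamma_k^+$ one has $\sigma_k(A_u)\geq \delta>0$ at $t=0,1$, so
$$u_{tt}=\sigma_k(A_u)^{-1}\bigl(f+\langle T_{k-1}(A_u),\nabla u_t\otimes \nabla u_t\rangle\bigr),$$
which is controlled as soon as $|\nabla u_t|$ is.

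Accordingly the whole problem reduces to bounding the mixed derivative $|\nabla u_t|$ on the two slices, and for this I would run a standard barrier argument at $t=0$ (the $t=1$ side is symmetric). The crucial ingredient is a strict supersolution vanishing on the boundary, which the concavity of $\log F_k$ (Theorem \ref{thm2.1}) supplies through the comparison function $U_{-a}$ of Proposition \ref{c0}. Setting $V:=u-U_{-a}$ gives $V\geq 0$ with $V=0$ on $\{t=0,1\}$, and running the concavity of $\log F_k$ along $u^s = sU_{-a}+(1-s)u$ yields
$$\cL_{F_k}(V)\,\leq\,f\,\log\!\bigl(f/F_k(U_{-a})\bigr)\,\leq\,-\kappa<0$$
uniformly, once $a$ is chosen large enough that $F_k(U_{-a})=2a\,\sigma_k(A_{U_0})-\langle T_{k-1}(A_{U_0}),\nabla(U_0)_t\otimes\nabla(U_0)_t\rangle \gg \sup f$. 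This $V$ will serve as the workhorse of the barrier.

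Fix $x_0\in M$ and a tangent vector $e\in T_{x_0}M$; extend $e$ to a local vector field and consider the auxiliary function $\phi=\nabla_e u-\nabla_e u_0$, which vanishes identically on $\{t=0\}$. The computation \eqref{E6} inside Proposition \ref{gradient}, together with the smoothness of $u_0$, gives a bound of the form $|\cL_{F_k}(\phi)|\leq C(1+u_{tt}^{2-k}\sigma_1(T_{k-1}(E_u)))$. One then constructs a barrier of the form
$$\Psi = NV+\varepsilon d(x,x_0)^2 + Bt$$
on a thin half-cylinder $B_r(x_0)\times[0,\tau]$ with constants chosen so that $\cL_{F_k}(\Psi)\leq -|\cL_{F_k}(\phi)|$ throughout and $\Psi\geq \pm\phi$ on the parabolic boundary. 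The maximum principle then forces $|\phi|\leq\Psi$ inside, and since both vanish at $(x_0,0)$ the inward normal derivative inequality at $(x_0,0)$ gives $|\nabla_e u_t(x_0,0)|\leq \Psi_t(x_0,0)\leq C_3$.

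The main obstacle is exactly the engineering of this barrier. The coefficients of $\cL_{F_k}$ are highly non-uniform: $u_{tt}$ can be large, the factor $\sigma_k(E_u)=f\,u_{tt}^{k-1}$ is degenerate, and the spatial trace $\sigma_1(T_{k-1}(E_u))$ has no a priori positive lower bound. The point of the argument, carried out in \cite{GS-2017}, is to choose the large constant $N$, the small $\varepsilon$, the cylinder radius $r$ and the height $\tau$ so that all error terms coming from $\cL_{F_k}(d^2)$, from $\cL_{F_k}(t)$, and from the second-order terms in $\phi$ are strictly dominated by $N\cL_{F_k}(V)\leq -N\kappa$, whose sign is independent of any of these degeneracies. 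Once this absorption is arranged, the rest is routine.
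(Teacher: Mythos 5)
Your reduction to the mixed second derivative is exactly what the paper does: the paragraph preceding Theorem~\ref{TGS} observes that $\nabla^2 u$ on the slices is inherited from $u_0,u_1$, that $u_{tt}$ then follows from the equation using $\sigma_k(A_u)\geq\delta>0$ on $\{t=0,1\}$, and that only $|\nabla u_t|$ remains; the paper then simply cites \cite{GS-2017} for the latter. Your barrier sketch for $|\nabla u_t|$ is the expected shape of that cited argument, and you correctly acknowledge that the constant-engineering is in \cite{GS-2017}, so your proposal takes essentially the same route.

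One caution worth flagging. The inequality you extract from concavity, $\cL_{F_k}(V)\leq f\log\bigl(f/F_k(U_{-a})\bigr)$, is the tangent-plane inequality in the \emph{matrix} variable $R$, and in Proposition~\ref{c0} it is invoked only at a point where $\nabla V=0$; at a generic point the operator $\cL_{F_k}(V)$ differs from the matrix directional derivative $F_k^{ij}\bigl(R_u-R_{U_{-a}}\bigr)_{ij}$ by gradient-dependent terms coming from the quadratic $\nabla u\otimes\nabla u-\tfrac12|\nabla u|^2g$ part of $A_u$, so the claim ``$\cL_{F_k}(V)\leq-\kappa<0$ uniformly'' does not follow from concavity alone. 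Moreover, even where it does hold, a fixed negative constant $-\kappa$ would not by itself absorb $\cL_{F_k}(\varepsilon d^2)$ or $|\cL_{F_k}(\phi)|$, which scale with $\mathrm{tr}\,F_k^{ij}\sim u_{tt}^{2-k}\sigma_1(T_{k-1}(E_u))$; the barrier in \cite{GS-2017} must produce negativity that is itself proportional to this trace. Since you defer to \cite{GS-2017} for precisely this engineering, the proposal is not wrong, but the stated intermediate inequality is looser than what the argument actually uses.
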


\subsubsection{\bf Interior estimates}

Note that $A_u\in \Gamma^+_k$ implies that $\sigma_1(A_u)>0$.  Given the uniform bound on $|\nabla u|$,
\[
\sigma_1(A_u)=\text{Tr}(A)+\Delta u+\left(1-\frac{n}{2}\right) |\nabla u|^2>0.
\]
This leads to a lower bound of $\Delta u$: there exists a constant $C_2$ such that  $\Delta u+C_2\geq 1. $
Moreover, this gives the equivalence of $\sigma_1(A_u)$ and $\Delta u$ in the sense
\begin{equation}
|\sigma_1(A_u)-\Delta u|\leq C_2.
\end{equation}
We want to derive  upper bound on $u_{tt}$ and $\Delta u+C_2$ (equivalently, the upper bound of $\sigma_1(A_u)$), which will imply the full hessian bound of $u$ since $A_u\in \Gamma^+_k$, and
\[
|A_u|^2=\sigma_1(A_u)^2-2\sigma_2(A_u)\leq \sigma_1(A_u)^2.
\]
The bound on $|\nabla u_t|$ will follow from (\ref{fi}) in Proposition \ref{Lem1} for $i=1$, in the sense that
\[
u_{tt}\sigma_1(A_u)-|\nabla u_t|^2>0.
\]

Gursky-Streets obtained interior $C^2$ estimates for \eqref{gs}, depending on the parameter $\epsilon^{-1}$.
Their computations of $\cL_{F_k}(u_{tt})$ and $\cL_{F_k}(\Delta u)$ in \cite{GS-2017} are very involved.
Here we offer a variant of such computations and this provides significant simplifications.
The complicated nonlinear terms of first order in $A_u$ and the curvature of the background metric will bring extra challenge, not only making the computations much more complicated, but also introducing several nonlinear terms which need extra care.

We need some preparations.
Given a symmetric matrix $R=(r_{ij})$ of $(n+1)\times (n+1)$, we use $r=(r_{ij})$ for the $n\times n$ portion with $ij\neq 0$ and $Y=(r_{01}, \cdots, r_{0n})$.
Note that
\[
F_k(R)=r_{00}\sigma_k(r)-\langle T_{k-1}(r), Y\otimes Y\rangle, \;\text{and}\; G_k(R)=\log F_k(R).
\]
We use the standard notation
\[
{G_k}^{ij}=\frac{\p G_k}{\p r_{ij}}=F_k^{-1} F_k^{ij}, G_k^{ij, lm}=\frac{\p^2 G_k}{\p r_{ij}\p r_{lm}}.
\]
Take the matrix $R$ of the form
\[
R=\begin{pmatrix}u_{tt} & \nabla u_t\\
\nabla u_t& A_u
\end{pmatrix}.
\]
Then we write the equation $F_k(R)=f$ and its equivalent form $G_k(R)=\log f. $
With this notation, we also record the linearization of $F_k(R)$. Given a smooth function $\phi$, we have
\begin{equation}\label{second0}
\cL_{F_k}(\phi)=F_k^{ij}\Phi_{ij}, \text{with}\; \Phi= \begin{pmatrix}\phi_{tt} & \nabla \phi_t\\
\nabla \phi_t& \cL_{A_u}\phi
\end{pmatrix}.
\end{equation}
We record the derivatives of $F_k$.
\begin{prop}\label{F-derivative}We have
\[
G_k^{ij}=F_k^{-1} F_k^{ij}, G_k^{ij, lm}=F_k^{-1}F_k^{ij, lm}-F_k^{-2}F_k^{ij}F_k^{lm}.
\]
We compute, for $i,j\neq 0$,
\begin{equation}\label{F-derivative2}
\begin{split}
&F_k^{00}=\sigma_k(r), F_k^{00, 00}=0,\; F_k^{00, i0}=0, \;\;F_k^{00, ij}=T_{k-1}(r)^{ij}\\
&F_k^{i0}=-\langle T_{k-1}(r), Y\boxtimes e_i \rangle=F_k^{0i} ,\; F_k^{lm}=r_{00}^{2-k}\langle  T_{k-1}(r_{00}r-Y\otimes Y), e_{lm}\rangle.
\end{split}
\end{equation}
\end{prop}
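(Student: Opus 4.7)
The first two identities relating $G_k$-derivatives to $F_k$-derivatives are a direct chain-rule exercise: $G_k=\log F_k$ gives $G_k^{ij}=F_k^{-1}F_k^{ij}$ at once, and differentiating the product $F_k^{-1}F_k^{ij}$ in $r_{lm}$ produces $F_k^{-1}F_k^{ij,lm}-F_k^{-2}F_k^{ij}F_k^{lm}$.

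For the explicit components of $F_k$, the plan is to exploit two equivalent expressions: the defining form $F_k(R)=r_{00}\sigma_k(r)-\langle T_{k-1}(r),Y\otimes Y\rangle$, and the transformed form (from the discussion preceding Proposition \ref{Lem1}) $F_k(R)=r_{00}^{1-k}\sigma_k(E)$ with $E=r_{00}\,r-Y\otimes Y$. Each derivative below is cleanest in one of the two forms.

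Working from the defining form, differentiation in $r_{00}$ kills the second summand, giving $F_k^{00}=\sigma_k(r)$. Since $\sigma_k(r)$ depends neither on $r_{00}$ nor on $Y$, one reads off $F_k^{00,00}=0$ and $F_k^{00,i0}=0$, while $F_k^{00,ij}=\partial\sigma_k(r)/\partial r_{ij}=T_{k-1}(r)^{ij}$ by Proposition \ref{prop3.1}. For $F_k^{i0}$, only the term $-\sum_{a,b}T_{k-1}(r)^{ab}r_{0a}r_{0b}$ carries $Y$-dependence; the symmetric-matrix convention $r_{i0}=r_{0i}$ produces two equal contributions that assemble into $-\langle T_{k-1}(r),Y\boxtimes e_i\rangle$, and by symmetry of $R$ this equals $F_k^{0i}$.

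For $F_k^{lm}$ with $l,m\neq 0$ the transformed $E$-form is essential: since $\partial E/\partial r_{lm}=r_{00}e_{lm}$ (the $Y\otimes Y$ summand carries no $r_{lm}$-dependence), the chain rule together with $\partial\sigma_k(E)/\partial E_{ab}=T_{k-1}(E)^{ab}$ yields $F_k^{lm}=r_{00}^{1-k}\cdot r_{00}\langle T_{k-1}(E),e_{lm}\rangle=r_{00}^{2-k}\langle T_{k-1}(E),e_{lm}\rangle$, matching the claim. The main delicacy is the factor-of-two bookkeeping for off-diagonal symmetric-matrix derivatives, cleanly absorbed into the $\boxtimes$ notation; a secondary point is switching to the $E$-form for $F_k^{lm}$, which bypasses the otherwise cumbersome direct computation of $\partial T_{k-1}(r)/\partial r_{lm}$ paired against $Y\otimes Y$.
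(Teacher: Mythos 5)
Your proof is correct. The paper's own "proof" is just the sentence "This is a straightforward computation," so there is nothing specific to compare against; your write-up supplies exactly the obvious details, with the one mildly nonobvious choice being to switch to the transformed form $F_k=r_{00}^{1-k}\sigma_k(E)$, $E=r_{00}r-Y\otimes Y$, to read off $F_k^{lm}$ via the chain rule rather than differentiating $T_{k-1}(r)$ directly — a clean and correct shortcut consistent with the identities the paper has already established.
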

\begin{proof}This is a straightforward computation.
\end{proof}

Now we are ready to compute $\cL_{F_k}(u_{tt})$ and $\cL_{F_k}(\Delta u)$.
\begin{prop}\label{tt} We have the following,
\begin{equation}
\cL_{F_k}(u_{tt})=f_{tt}-f_t^2 f^{-1}-f G_k^{ij, lm} \p_t r_{ij} \p_t r_{lm}-u_{tt}^{2-k}\langle T_{k-1}(E_u), 2\nabla u_t\otimes \nabla u_t-|\nabla u_t|^2 g\rangle.
\end{equation}
\end{prop}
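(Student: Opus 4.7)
The plan is to start from the equation in logarithmic form, $G_k(R)=\log f$, and differentiate twice with respect to $t$. The first derivative gives $G_k^{ij}\p_t r_{ij}=f_t/f$, and a second differentiation produces
\begin{equation*}
G_k^{ij}\p_t^2 r_{ij}+G_k^{ij,lm}\p_t r_{ij}\p_t r_{lm}=\frac{f_{tt}}{f}-\frac{f_t^2}{f^2}.
\end{equation*}
Multiplying through by $f$ and using $G_k^{ij}=F_k^{-1}F_k^{ij}$, this rearranges to
\begin{equation*}
F_k^{ij}\p_t^2 r_{ij}=f_{tt}-f_t^2 f^{-1}-fG_k^{ij,lm}\p_t r_{ij}\p_t r_{lm}.
\end{equation*}

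Next I would compare the ``raw'' expression $F_k^{ij}\p_t^2 r_{ij}$ with $\cL_{F_k}(u_{tt})=F_k^{ij}\Phi_{ij}$ given by \eqref{second0}. In the $00$ and $0i$ blocks there is nothing to show: $\p_t^2 r_{00}=u_{tttt}=\Phi_{00}$, and $\p_t^2 r_{0i}=(u_{tt})_{i,t}=\Phi_{0i}$ since spatial covariant derivatives commute with the $t$-parameter. In the spatial block the two tensors differ, because $r_{ij}=(A_u)_{ij}$ contains nonlinear first-order dependence on $u$. Direct differentiation gives
\begin{equation*}
\p_t^2 A_u=\nabla^2 u_{tt}+\nabla u_{tt}\boxtimes\nabla u+2\,\nabla u_t\otimes\nabla u_t-\bigl(\langle\nabla u,\nabla u_{tt}\rangle+|\nabla u_t|^2\bigr)g,
\end{equation*}
while the spatial linearization from Proposition \ref{proplv} is
\begin{equation*}
\cL_{A_u}(u_{tt})=\nabla^2 u_{tt}+\nabla u\boxtimes\nabla u_{tt}-\langle\nabla u,\nabla u_{tt}\rangle g,
\end{equation*}
so that $\p_t^2 A_u-\cL_{A_u}(u_{tt})=2\,\nabla u_t\otimes\nabla u_t-|\nabla u_t|^2 g$, which involves only first $t$-derivatives of $u$.

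The final step is to contract this difference with the spatial block of $F_k^{ij}$. By Proposition \ref{F-derivative}, for $a,b\neq 0$ one has $F_k^{ab}=u_{tt}^{2-k}\langle T_{k-1}(E_u),e_{ab}\rangle$, since $u_{tt}A_u-\nabla u_t\otimes\nabla u_t=E_u$. Consequently
\begin{equation*}
F_k^{ij}\p_t^2 r_{ij}-\cL_{F_k}(u_{tt})=u_{tt}^{2-k}\langle T_{k-1}(E_u),2\,\nabla u_t\otimes\nabla u_t-|\nabla u_t|^2 g\rangle,
\end{equation*}
and combining this with the identity obtained from differentiating the equation yields exactly the claimed formula for $\cL_{F_k}(u_{tt})$.

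The computation is essentially bookkeeping; the only place demanding care is the identification of $\p_t^2 A_u-\cL_{A_u}(u_{tt})$, because the cross terms from $\nabla u\otimes\nabla u$ and $-\tfrac{1}{2}|\nabla u|^2 g$ contribute both matching pieces and the residual $2\nabla u_t\otimes\nabla u_t-|\nabla u_t|^2 g$. Once that residual is isolated, the definition of $E_u$ and the formula for $F_k^{ab}$ from Proposition \ref{F-derivative} convert the residual cleanly into the $T_{k-1}(E_u)$ term appearing in the proposition.
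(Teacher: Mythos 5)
Your proposal is correct and follows essentially the same route as the paper: differentiate $G_k(R)=\log f$ twice in $t$, isolate the nonlinear residual $\p_t^2 A_u-\cL_{A_u}(u_{tt})=2\nabla u_t\otimes\nabla u_t-|\nabla u_t|^2 g$, and contract it with the spatial block $F_k^{ab}=u_{tt}^{2-k}\langle T_{k-1}(E_u),e_{ab}\rangle$. The only cosmetic difference is that the paper packages the residual as a matrix $\cR$ (with vanishing $0$-row/column) before contracting, whereas you handle the block structure directly; the content is identical.
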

\begin{proof}
We compute
\[
\p_t G_k=G_k^{ij} \p_t r_{ij}=f_t f^{-1}, \p_t^2 (G_k)=G_k^{ij, lm} \p_t r_{ij} \p_t r_{lm}+G_k^{ij}\p^2_t r_{ij}= f_{tt} f^{-1}- (f_tf^{-1})^2.
\]
That is

\begin{equation}\label{second1}
G_k^{ij, lm} \p_t r_{ij} \p_t r_{lm}+F_k^{-1}F_k^{ij}\p^2_t r_{ij}=f_{tt} f^{-1}- (f_tf^{-1})^2.
\end{equation}
Now we consider
\[
(\p^2_t r_{ij})=\p^2_t R=\begin{pmatrix}\p^2_tu_{tt} & \p^2_t\nabla u_t\\
\p^2_t\nabla u_t& \p^2_tA_u
\end{pmatrix}.
\]
The main point is that $A_u$, hence $R$ is not linear on $u$.  We compute
\[
\begin{split}
\p^2_t A_u=& \nabla^2 u_{tt}+\nabla u_{tt}\boxtimes \nabla u-(\nabla u_{tt}, \nabla u) g+2\nabla u_t\otimes \nabla u_t-|\nabla u_t|^2 g\\
=& \cL_{A_u} u_{tt}+2\nabla u_t\otimes \nabla u_t-|\nabla u_t|^2 g.
\end{split}\]
Denote $\cR=2\nabla u_t\otimes \nabla u_t-|\nabla u_t|^2 g$ and this is the term coming from the nonlinearity of $A_u$.
Hence we can write, with $\phi=u_{tt}$,
\[
\p^2_t R=\begin{pmatrix} {\phi}_{tt} & \nabla \phi_t\\
\nabla \phi_t& \cL_{A_u}(\phi)+\cR
\end{pmatrix}.
\]
By \eqref{second0} and \eqref{second1}, we get that
\[
G_k^{ij, lm} \p_t r_{ij} \p_t r_{lm}+F_k^{-1}\cL_{F_k}(u_{tt})+F_k^{-1}F_k^{ij}\cR_{ij}=f_{tt} f^{-1}-(f_tf^{-1})^2,
\]
where we use the notation $\cR_{i0}=0$, for $i=0, 1, \cdots, n$. We claim that
\[
F_k^{ij}\cR_{ij}=u_{tt}^{2-k}T_{k-1}(E_u)^{ij}\cR_{ij}=u_{tt}^{2-k}\langle T_{k-1}(E_u), \cR\rangle.
\]
But this is straightforward since $F_k=u_{tt}^{1-k}\sigma_k(E_u)$,
\[
F_k^{ij}=u_{tt}^{2-k}\langle T_{k-1}(E_u), e_{ij}\rangle, ij\neq 0.
\] This completes the proof.
\end{proof}

Next we compute $\cL_{F_k}(\Delta u)$.
Note that
$$ F_k(u_{tt},A_u,\nabla u_t)=u_{tt}^{1-k}\sigma_k(E_u)=u_{tt}\sigma_k(A_u)-\langle T_{k-1}(A_u), \nabla u_t\otimes \nabla u_t \rangle.$$

\begin{prop}\label{Pro1}
\begin{align}\label{delta}
\mathcal{L}_{F_k}(\triangle u)= -fG_k^{ij,lm}\nabla_pr_{ij}\nabla_pr_{lm} + \triangle f -|\nabla f|^2f^{-1}-F_k^{ij}\mathcal{R}_{1,ij},
\end{align}
where $\cR_1$ is given in \eqref{r1} and \eqref{r2}. We have the following,
 \begin{equation}\label{eq3.49}
 F_k^{ij}\mathcal{R}_{1,ij}=-2u_{tt}^{1-k}\langle T_{k-1}(E_u), Ric(\nabla u_t,\cdot)\boxtimes \nabla u_t \rangle + u_{tt}^{2-k}\langle T_{k-1}(E_u),S_1 +Rm\ast \nabla^2 u + S_0 \rangle
  \end{equation}
with
 \begin{equation}
 S_1=2 \sum_p \nabla\nabla_p u\nabla \otimes \nabla\nabla_p u- |\nabla^2 u|^2g.
  \end{equation}
  where $\cS_0$ is a uniformly bounded term (matrix) and $Rm*\nabla^2 u$ denotes two terms of contraction of curvature with $\nabla^2u$ (which we do not need precise expression).
  \begin{proof}
We compute
\begin{equation}\label{r3}
\Delta G_k(R)=G_k^{ij, kl} \nabla_p r_{ij}\nabla_p r_{kl}+F_k^{-1} F_k^{ij} \Delta r_{ij}=\Delta f f^{-1}-|\nabla f|^2 f^{-2}.
\end{equation}
Now we compute
\[
\left(\Delta r_{ij}\right)=\Delta R= \begin{pmatrix}\Delta u_{tt} & \Delta \nabla u_t\\
\Delta \nabla u_t& \Delta A_u
\end{pmatrix}.
\]
Recall $A_u=A+\nabla^2 u+\nabla u\otimes \nabla u-|\nabla u|^2 g/2$ and now we compute $\Delta A_u$.
We need several Bochner-Weitzenbock formula as follows,
\[
\begin{split}
&\Delta \nabla u_t=\nabla \Delta u_t+Ric(\nabla u_t, \cdot), \Delta \nabla^2 u=\nabla^2 \Delta u+Rm*\nabla^2 u+\nabla Rm *\nabla u.\\
&\Delta (\nabla u\otimes \nabla u)=\nabla \Delta u\boxtimes \nabla u+Ric(\nabla u, \cdot)\boxtimes \nabla u+2\nabla \nabla_p u\otimes \nabla \nabla_p u.\\
&\Delta \left(\frac{1}{2}|\nabla u|^2\right)=|\nabla^2 u|^2+Ric(\nabla u, \nabla u)+\langle \nabla \Delta u, \nabla u\rangle.
\end{split}
\]
We use $Rm*\nabla^2 u+\nabla Rm *\nabla u$ to denote contraction of terms which we do not need precise expression.
We can then compute
\[
\Delta R= \begin{pmatrix}(\Delta u)_{tt} & \nabla \Delta u_t+Ric(\nabla u_t, \cdot)\\
\nabla \Delta u_t+Ric(\nabla u_t, \cdot) & \cL_{A_u}(\Delta u)+\cS
\end{pmatrix},
\]
where $\cS$ is the remaining matrix of the form
\begin{equation}\label{r2}
\begin{split}
\cS=&Ric(\nabla u, \cdot)\boxtimes \nabla u+2\nabla \nabla_p u\otimes \nabla \nabla_p u-(|\nabla^2 u|^2+Ric(\nabla u, \nabla u))g\\
&+ \Delta A+Rm*\nabla^2 u+\nabla Rm *\nabla u.
\end{split}
\end{equation}
Denote
\begin{equation}\label{r1}
\cR_1=\begin{pmatrix}0 & Ric(\nabla u_t, \cdot)\\
Ric(\nabla u_t, \cdot) & \cS
\end{pmatrix}.
\end{equation}
Then we can write
\[
\Delta R=\cR_1+\begin{pmatrix}(\Delta u)_{tt} & \nabla \Delta u_t\\
\nabla \Delta u_t & \cL_{A_u}(\Delta u)
\end{pmatrix}.
\]
It then follows that
\[
F_k^{ij}\Delta r_{ij}=\cL_{F_k}(\Delta u)+F_k^{ij}\cR_{1, ij}.
\]
Together with \eqref{r3} this completes the proof of \eqref{delta}. The computation of $F_k^{ij}\cR_{1, ij}$ in \eqref{eq3.49} is straightforward, noting that
\[
F_k^{i0}=-u_{tt}^{1-k}\langle T_{k-1}(E_u), \nabla u_t\boxtimes e_i\rangle, i\neq 0.
\]
\end{proof}

To estimate $u_{tt}$ and $\Delta u$, we need the following Lemma.
\begin{lemma}\label{Lem2}
Let $\phi$ be any smooth function. For $n \ge 2k$,
 \begin{align}\label{qd}
\langle T_{k-1}(E_u), \frac{|\nabla\phi|^2}{2}g-\nabla\phi\otimes\nabla\phi\rangle \ge
\frac{(n-2k)(n-k+1)}{2n} \sigma_{k-1}(E_u)|\nabla\phi|^2
\end{align}
\end{lemma}

\begin{proof}
Diagonalize $E_u$ with eigenvalues $\lambda=(\lambda_1,...,\lambda_n)$, so
\begin{align}
 \begin{split}
&\langle T_{k-1}(E_u), \dfrac{|\nabla\phi|^2}{2}g-\nabla\phi\otimes\nabla\phi\rangle \\
=&\dfrac{1}{2}\sigma_1(T_{k-1}(\lambda)|\nabla\phi|^2-\sigma_{k-1}(\lambda|i)|\nabla_i \phi|^2\\
=&\dfrac{n-k+1}{2}\sigma_{k-1}(\lambda)|\nabla\phi|^2-\sigma_{k-1}(\lambda|i)|\nabla_i \phi|^2,
\end{split}
 \end{align}
then \eqref{qd} is equivalent to prove
 \begin{align}
 \begin{split}
&\left[\dfrac{n-k+1}{2}-\dfrac{(n-2k)(n-k+1)}{2n}\right ]\sigma_{k-1}(\lambda)|\nabla\phi|^2-\sigma_{k-1}(\lambda|i)|\nabla_i \phi|^2\\
= &\dfrac{k(n-k+1)}{n}[\sigma_{k-1}(\lambda|i)+\lambda_i\sigma_{k-2}(\lambda|i)]|\nabla_i \phi|^2-\sigma_{k-1}(\lambda|i)|\nabla_i \phi|^2\\
= & \left [(k-1)(n-k)\sigma_{k-1}(\lambda|i)+k(n-k+1)\dfrac{\sigma_{k}(\lambda)-\sigma_{k}(\lambda|i)}{\sigma_{k-1}(\lambda|i)}\sigma_{k-2}(\lambda|i)\right ]\dfrac{|\nabla_i \phi|^2}{n}\\
\ge &  [(k-1)(n-k)\sigma_{k-1}^2(\lambda|i)-k(n-k+1)\sigma_{k}(\lambda|i)\sigma_{k-2}(\lambda|i)]\dfrac{|\nabla_i \phi|^2}{n\sigma_{k-1}(\lambda|i)}\\
\ge & 0.
\end{split}
 \end{align}
 The last inequality holds from Newton's inequality \eqref{eqN1} noting that $\lambda | i \in \mathbb{R}^{n-1}$.
\end{proof}
The interior estimate of $u_{tt}$ now becomes immediate ($n \ge 2k)$.
\begin{lemma}For $n\geq 2k$, there exists a constant $C_3$ such that
\[
u_{tt}\leq C_3.
\]
\end{lemma}
\begin{proof}By the concavity of $G$, Lemma \ref{Lem2} and Proposition \ref{tt}, we have
\[
\cL_F(u_{tt})\geq f_{tt}-f_t^2 f^{-1}.
\]
It then follows that, using \eqref{E9},
\[
\cL_F(u_{tt}+u_t^2)\geq 2u_t f_t+2f u_{tt}+f_{tt}-f_t^2 f^{-1}.
\]
If $u_{tt}+u_t^2$ achieves its maximum on the boundary, then by Theorem \ref{TGS} and Proposition~\ref{pro3.8} we are done. Otherwise at the maximum point of $u_{tt}+u_t^2$, we have
\[
2u_t f+2f u_{tt}+f_{tt}-f_t^2 f^{-1}\leq 0
\]
This is sufficient to bound $u_{tt}$ by a uniform constant $C_3$, where $C_3$ depends on the boundary estimate of $u_{tt}$ and $-f_{tt}f^{-1}, |f_t|f^{-1}$ in addition.
\end{proof}
\end{prop}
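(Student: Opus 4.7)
The plan is to reduce the tensorial statement to a pointwise scalar inequality on the eigenvalues of $E_u$, and then close the calculation by combining the Newton identities of Proposition \ref{prop3.1} with Newton's inequality (Proposition \ref{NI}) applied to the deleted tuple $\lambda|i$. Both sides of the claimed inequality are pointwise and quadratic in $\nabla\phi$, so it suffices to work at a single point in an orthonormal frame diagonalizing $E_u$.

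Let $\lambda=(\lambda_1,\ldots,\lambda_n)$ denote the eigenvalues of $E_u$ in such a frame. Then $T_{k-1}(E_u)$ is diagonal with entries $\sigma_{k-1}(\lambda|i)$, and Proposition \ref{prop3.1} gives $\sigma_1(T_{k-1}(E_u))=(n-k+1)\sigma_{k-1}(\lambda)$, so the left-hand side of the inequality becomes
\[
\frac{n-k+1}{2}\sigma_{k-1}(\lambda)|\nabla\phi|^2 - \sum_i \sigma_{k-1}(\lambda|i)|\nabla_i\phi|^2.
\]
Using the arithmetic identity $\frac{n-k+1}{2}-\frac{(n-2k)(n-k+1)}{2n}=\frac{k(n-k+1)}{n}$, the claim reduces to
\[
\sum_i\left[\frac{k(n-k+1)}{n}\sigma_{k-1}(\lambda)-\sigma_{k-1}(\lambda|i)\right]|\nabla_i\phi|^2\geq 0,
\]
which I will establish termwise in $i$.

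For each $i$, I expand $\sigma_{k-1}(\lambda)=\sigma_{k-1}(\lambda|i)+\lambda_i\sigma_{k-2}(\lambda|i)$ (Proposition \ref{prop3.1}), rewriting the bracket as
\[
\frac{(k-1)(n-k)}{n}\sigma_{k-1}(\lambda|i)+\frac{k(n-k+1)}{n}\lambda_i\sigma_{k-2}(\lambda|i).
\]
Since the sign of $\lambda_i$ is not controlled, I use the identity $\lambda_i\sigma_{k-1}(\lambda|i)=\sigma_k(\lambda)-\sigma_k(\lambda|i)$. Multiplying the bracket by the positive quantity $\sigma_{k-1}(\lambda|i)$ --- positive because $E_u\in\Gamma^+_k$ (Proposition \ref{Lem1}) makes $T_{k-1}(E_u)$ positive definite, cf.\ Proposition \ref{gamma1} --- and dropping the manifestly nonnegative contribution $\frac{k(n-k+1)}{n}\sigma_k(\lambda)\sigma_{k-2}(\lambda|i)$ reduces the task to
\[
(k-1)(n-k)\sigma_{k-1}^2(\lambda|i)-k(n-k+1)\sigma_k(\lambda|i)\sigma_{k-2}(\lambda|i)\geq 0,
\]
which is exactly Newton's inequality (Proposition \ref{NI}) applied to the $(n-1)$-tuple $\lambda|i$ at degree $k-1$ (substituting $n\mapsto n-1$, $k\mapsto k-1$).

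The main subtlety is verifying that the arithmetic manipulations above are legitimate: the positivity $\sigma_{k-1}(\lambda|i)>0$ and the nonnegativities $\sigma_k(\lambda)>0$, $\sigma_{k-2}(\lambda|i)\geq 0$ all need to be justified from $E_u\in\Gamma^+_k$ together with the standard nesting $\Gamma^+_k\subset\Gamma^+_j$ for $j\leq k$. Notably, the hypothesis $n\geq 2k$ plays \emph{no} role in the pointwise inequality in the display above, which is unconditional; rather, it enters only to ensure that the coefficient $\frac{(n-2k)(n-k+1)}{2n}$ on the right-hand side is nonnegative, so that the lemma provides a quantitatively useful lower bound for the subsequent $u_{tt}$ estimate.
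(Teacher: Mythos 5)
The part of the statement you actually prove---the quadratic-form inequality of Lemma \ref{Lem2}---is correct, and your argument coincides with the paper's: diagonalize $E_u$, use $\sum_i\sigma_{k-1}(\lambda|i)=(n-k+1)\sigma_{k-1}(\lambda)$ to reduce \eqref{qd} to the termwise coefficient inequality $\frac{k(n-k+1)}{n}\sigma_{k-1}(\lambda)\ge\sigma_{k-1}(\lambda|i)$, expand $\sigma_{k-1}(\lambda)=\sigma_{k-1}(\lambda|i)+\lambda_i\sigma_{k-2}(\lambda|i)$, eliminate $\lambda_i$ via $\lambda_i\sigma_{k-1}(\lambda|i)=\sigma_k(\lambda)-\sigma_k(\lambda|i)$, and finish with Newton's inequality applied to the $(n-1)$-tuple $\lambda|i$ at degree $k-1$. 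Your side remarks are also accurate: the positivity of $\sigma_{k-1}(\lambda|i)$, $\sigma_k(\lambda)$ and $\sigma_{k-2}(\lambda|i)$ does follow from $E_u\in\Gamma_k^+$ (Proposition \ref{Lem1}) and the nesting of the cones, and the hypothesis $n\ge 2k$ is indeed used only to make the right-hand side of \eqref{qd} a nonnegative, hence quantitatively useful, lower bound.

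The gap is that this is only one of the claims in the block. The headline assertion of Proposition \ref{Pro1} is the identity \eqref{delta} together with the expression \eqref{eq3.49} for $F_k^{ij}\cR_{1,ij}$, and the block also contains the interior bound $u_{tt}\le C_3$; your proposal addresses none of these, and they do not follow from the eigenvalue computation. The identity comes from applying $\Delta$ to the equation $G_k(R)=\log f$, which yields $G_k^{ij,lm}\nabla_pr_{ij}\nabla_pr_{lm}+F_k^{-1}F_k^{ij}\Delta r_{ij}=\Delta f\,f^{-1}-|\nabla f|^2f^{-2}$, and then commuting $\Delta$ past the entries of $R$ via Bochner--Weitzenb\"ock formulas ($\Delta\nabla u_t=\nabla\Delta u_t+Ric(\nabla u_t,\cdot)$, $\Delta\nabla^2u=\nabla^2\Delta u+Rm*\nabla^2u+\nabla Rm*\nabla u$, and the corresponding formulas for $\nabla u\otimes\nabla u$ and $|\nabla u|^2$) so as to isolate $\cL_{F_k}(\Delta u)$ plus the error matrix $\cR_1$; one then evaluates $F_k^{ij}\cR_{1,ij}$ using $F_k^{i0}=-u_{tt}^{1-k}\langle T_{k-1}(E_u),\nabla u_t\boxtimes e_i\rangle$. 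In particular, the term $S_1=2\sum_p\nabla\nabla_pu\otimes\nabla\nabla_pu-|\nabla^2u|^2g$ appearing in \eqref{eq3.49} is exactly the quadratic expression that Lemma \ref{Lem2} (applied with $\nabla\phi=\nabla\nabla_pu$ and summed over $p$) is designed to absorb, and the $u_{tt}$ bound further requires combining the concavity of $G_k$ with Proposition \ref{tt} and \eqref{E9}; without deriving \eqref{delta} and \eqref{eq3.49}, the lemma you proved has nothing to act on.
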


\begin{prop}\label{Pro2}
For $n \ge 2k+1$, there exists a uniform constant $C$ such that
$$\triangle u \le C.$$
\end{prop}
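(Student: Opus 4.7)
The plan is to run a maximum principle argument for $w=\Delta u$ on the cylinder $M\times[0,1]$. On the time slices $t=0,1$ we have $\Delta u(0,\cdot)=\Delta u_0$ and $\Delta u(1,\cdot)=\Delta u_1$, which are bounded by the boundary data. So I may assume $\Delta u$ attains its global maximum at an interior point $p$, where $\cL_{F_k}(\Delta u)\le 0$. At $p$ I invoke Proposition \ref{Pro1}. Concavity of $\log F_k$ (Theorem \ref{thm2.1}) makes the term $-fG_k^{ij,lm}\nabla_p r_{ij}\nabla_p r_{lm}\ge 0$, so it can be dropped. The $f$-contribution $\Delta f-|\nabla f|^2/f$ is bounded in absolute value by $C(C_3)\,f$, because $f^{-1}|\Delta f|$ and $f^{-1}|\nabla f|$ are part of the $C_3$ data.

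The key positive term comes from Lemma \ref{Lem2} applied with $\phi=\nabla_p u$ and summed in $p$: using $\sum_p|\nabla\nabla_p u|^2=|\nabla^2 u|^2$ and the explicit form of $S_1$ one obtains, writing $\tau:=u_{tt}^{2-k}\sigma_{k-1}(E_u)$,
\[
-u_{tt}^{2-k}\langle T_{k-1}(E_u),S_1\rangle \;\ge\; \frac{(n-2k)(n-k+1)}{n}\,\tau\,|\nabla^2 u|^2.
\]
The coefficient $c_0:=(n-2k)(n-k+1)/n$ is strictly positive precisely under the hypothesis $n\ge 2k+1$, supplying a coercive good term in $|\nabla^2 u|^2$.

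Next I would control the remaining pieces of $-F_k^{ij}\cR_{1,ij}$. The contributions from $Rm\ast\nabla^2 u$ and $S_0$ are bounded in absolute value by $C\tau(1+|\nabla^2 u|)$, using $\mathrm{tr}\,T_{k-1}(E_u)=(n-k+1)\sigma_{k-1}(E_u)$ and boundedness of the background curvature. The Ricci cross term $2u_{tt}^{1-k}\langle T_{k-1}(E_u),Ric(\nabla u_t,\cdot)\boxtimes\nabla u_t\rangle$ is controlled by $Cu_{tt}^{-1}\tau|\nabla u_t|^2$; here I use the consequence $\sigma_1(E_u)\ge 0$ of $E_u\in\Gamma_k^+$ (Proposition \ref{Lem1}), which gives $|\nabla u_t|^2\le u_{tt}\sigma_1(A_u)$, so this term is at most $C\tau\sigma_1(A_u)$. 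Combined with the obvious bound $|\nabla^2 u|\le n\,\sigma_1(A_u)+C_2$ (from $A_u\in\Gamma_k^+$ and the $C^1$ estimate already in hand), the inequality $\cL_{F_k}(\Delta u)\le 0$ at $p$ reduces to
\[
c_0\,\tau\,|\nabla^2 u|^2 \;\le\; C\,\tau\,\sigma_1(A_u) + C\,\tau + C(C_3)\,f,
\]
and invoking $|\nabla^2 u|^2\ge \sigma_1(A_u)^2/n$ and absorbing the linear terms once $\sigma_1(A_u)$ exceeds a threshold yields $\tfrac{c_0}{2n}\,\tau\,\sigma_1(A_u)^2 \le C(C_3)\,f$.

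The decisive step is a matching lower bound for $\tau$: Proposition \ref{Lem1} (case $i=k$) gives $\sigma_{k-1}(E_u)\ge f\,\sigma_{k-1}(A_u)\sigma_k(A_u)^{-1}u_{tt}^{k-2}$, so $\tau\ge f\,\sigma_{k-1}(A_u)/\sigma_k(A_u)$; Newton--MacLaurin (Proposition \ref{NMI}) delivers $\sigma_{k-1}(A_u)/\sigma_k(A_u)\ge c(n,k)/\sigma_1(A_u)$, hence $\tau\,\sigma_1(A_u)\ge c\,f$. Substituting into the estimate above,
\[
\frac{c_0 c}{2n}\,f\,\sigma_1(A_u) \;\le\; \frac{c_0}{2n}\,\tau\,\sigma_1(A_u)^2 \;\le\; C(C_3)\,f,
\]
the factors of $f$ cancel and I conclude $\sigma_1(A_u)(p)\le C$, hence $\Delta u\le C$ throughout $M\times[0,1]$. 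The main obstacle is exactly producing this $f$-independent bound: both the curvature error and the hidden $|\nabla u_t|^2$ contribution must be dominated by the coercive $c_0\tau|\nabla^2 u|^2$, which forces the strict dimension restriction $n\ge 2k+1$ through Lemma \ref{Lem2}. The borderline case $n=2k$ loses the coefficient $c_0$ altogether and requires a genuinely different and more delicate argument.
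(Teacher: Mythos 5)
Your proof is correct and follows essentially the same route as the paper: maximum principle at an interior point, concavity of $G_k$ to drop the $G_k^{ij,lm}$ term, Lemma \ref{Lem2} for the coercive $|\nabla^2 u|^2$ term (which vanishes exactly at $n=2k$), the bound $|\nabla u_t|^2\le u_{tt}\sigma_1(A_u)$ from $\sigma_1(E_u)\ge 0$ to tame the Ricci cross term, Proposition \ref{Lem1} with $i=k$ to produce the $f$-independent lower bound $u_{tt}^{2-k}\sigma_{k-1}(E_u)\ge f\,\sigma_{k-1}(A_u)/\sigma_k(A_u)$, and Newton--MacLaurin to close. The only differences are cosmetic (your notation $\tau$, and you carry out the cancellation of $f$ explicitly, which the paper leaves as ``this is sufficient to get a uniform upper bound of $\Delta u$'').
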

\begin{proof}
By Lemma \ref{Lem1}, $f_1 >0 $ then \eqref{fi} yields
\begin{align}\label{}
u_{tt}\sigma_1(A_u)\ge |\nabla u_t|^2.
 \end{align}
So in Proposition \ref{Pro1},
\begin{align}
 \begin{split}\label{eqr}
 &2u_{tt}^{1-k}\langle T_{k-1}(E_u), Ric(\nabla u_t,\cdot)\boxtimes \nabla u_t \rangle\\~
 \ge& -C_1u_{tt}^{1-k} \sigma_1(T_{k-1}(E_u))|\nabla u_t|^2\\
 \ge & -C_1 u_{tt}^{2-k}\sigma_1(T_{k-1}(E_u))\sigma_1(A_u)
\end{split}
 \end{align}
and
\begin{align}\label{eqs0}
-u_{tt}^{2-k}\langle T_{k-1}(E_u),Rm\ast \nabla^2 u + S_0 \rangle
\ge -C_2u_{tt}^{2-k}|T_{k-1}(E_u)|(|\nabla^2 u|+1)
\end{align}
In Proposition \ref{Pro1}, combining \eqref{eqr}, \eqref{eqs0} and Lemma \ref{Lem2}, together with the concavity of $G_k$,
\begin{align}
 \begin{split}\label{lflu}
&\mathcal{L}_{F_k}(\triangle u)\\
\ge&  \triangle f -|\nabla f|^2f^{-1} -C_1 u_{tt}^{2-k}\sigma_1(T_{k-1}(E_u))\sigma_1(A_u)\\
&-C_2u_{tt}^{2-k}|T_{k-1}(E_u)|(|\nabla^2 u|+1)+\dfrac{(n-2k)(n-k+1)}{2n}u_{tt}^{2-k} \sigma_{k-1}(E_u)|\nabla^2 u|^2\\
\ge &  \triangle f -|\nabla f|^2f^{-1} -C_3u_{tt}^{2-k}\sigma_{k-1}(E_u)(|\nabla^2 u|+1)\\
&+\dfrac{(n-2k)(n-k+1)}{2n}u_{tt}^{2-k} \sigma_{k-1}(E_u)|\nabla^2 u|^2.
\end{split}
 \end{align}
The last inequality holds since $|T_{k-1}(E_u)| \le \sigma_1(T_{k-1}(E_u))=(n-k+1)\sigma_{k-1}(E_u)$.
Suppose $\triangle u$ obtains its maximum at an interior point $p$ (otherwise done), assume $|\nabla^2 u(p)|$ is sufficiently large(otherwise done), then at $p$ we get
$$ C_4u_{tt}^{2-k} \sigma_{k-1}(E_u)|\nabla^2 u|^2 \le -\triangle f +|\nabla f|^2f^{-1}.$$
By Lemma \ref{Lem1} for $i=k$, it means that
$$ C_4\sigma_{k}(A_u)^{-1}\sigma_{k-1}(A_u)|\nabla^2 u|^2 \le -f^{-1}\triangle f +|\nabla f|^2f^{-2}.$$
This is sufficient to get a uniform upper bound of $\triangle u$.
\end{proof}

The estimates of $\Delta u$ (for $n\geq 2k+1$) is rather straightforward given the strictly lower bound of the quadratic form in Lemma \ref{Lem2}. When $n=2k$, such a positivity is too weak and
the interior estimate of $\Delta u$ is subtle. A key technical ingredient is the following,
\begin{lemma}\label{Pro4}
\begin{align}
 &K\sigma_k(A_u)+\sum_iQ_u(Du_i,Du_i) \ge \sum_i \varepsilon \langle T_{k-1}(E_u),  \nabla\nabla_i u \otimes \nabla\nabla_i u \rangle u_{tt}^{2-k}.
 \end{align}
 \end{lemma}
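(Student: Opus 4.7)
The plan is to exploit the fact that each $Q_u(Du_i,Du_i)$ is already a perfect square in the $T$-inner product, where $T := T_{k-1}(E_u)$ is positive definite since $E_u \in \Gamma_k^+$. A direct rearrangement of the formula in Proposition \ref{quadratic} (setting $\phi=\psi=u_i$ and completing the square) gives
\[
Q_u(Du_i, Du_i) \;=\; 2u_{tt}^{1-k}\,\bigl\langle T,\, Y_i \otimes Y_i\bigr\rangle, \qquad Y_i := \sqrt{u_{tt}}\,\nabla u_i - \frac{u_{it}}{\sqrt{u_{tt}}}\,\nabla u_t.
\]
This is the same completing-the-square trick that produced the positivity of $Q_u$ in Proposition \ref{quadratic}, but now I would run it \emph{backwards}: rewrite $\sqrt{u_{tt}}\,\nabla u_i = Y_i + (u_{it}/\sqrt{u_{tt}})\nabla u_t$ and apply the standard inequality $\|a+b\|_T^2 \le (1+\eta)\|a\|_T^2 + (1+\eta^{-1})\|b\|_T^2$ for the $T$-norm (true for any $\eta>0$).

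Carrying this out, multiplying by $u_{tt}^{1-k}$ and summing over $i$ yields
\[
u_{tt}^{2-k}\sum_i \langle T,\, \nabla u_i \otimes \nabla u_i\rangle \;\le\; \frac{1+\eta}{2}\sum_i Q_u(Du_i, Du_i) + (1+\eta^{-1})\,u_{tt}^{-k}|\nabla u_t|^2\,\langle T,\, \nabla u_t \otimes \nabla u_t\rangle.
\]
The term on the right that must be absorbed into $K\sigma_k(A_u)$ is the last one. Here I would use Proposition \ref{gamma2}: since $E_u = u_{tt}A_u - \nabla u_t\otimes \nabla u_t$, we get
\[
\langle T_{k-1}(E_u),\, \nabla u_t \otimes \nabla u_t\rangle \;=\; \langle T_{k-1}(u_{tt}A_u),\, \nabla u_t \otimes \nabla u_t\rangle \;=\; u_{tt}^{k-1}\langle T_{k-1}(A_u),\, \nabla u_t\otimes \nabla u_t\rangle,
\]
and then the equation $u_{tt}\sigma_k(A_u) - \langle T_{k-1}(A_u),\nabla u_t\otimes \nabla u_t\rangle = f \ge 0$ gives $\langle T_{k-1}(A_u),\nabla u_t\otimes \nabla u_t\rangle \le u_{tt}\sigma_k(A_u)$. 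Putting the two facts together,
\[
u_{tt}^{-k}|\nabla u_t|^2\,\langle T,\, \nabla u_t \otimes \nabla u_t\rangle \;\le\; |\nabla u_t|^2\, \sigma_k(A_u).
\]
Choosing, say, $\eta = 1$ and $\epsilon = 1$, this finally produces the asserted inequality with $K$ a multiple of $|\nabla u_t|^2$.

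The main subtlety — and the reason this lemma is the technical heart of the $n=2k$ interior $C^2$ estimate — is that the constant $K$ must depend on $|\nabla u_t|^2$, which at this stage is \emph{not} yet a priori bounded. This is acceptable because by Proposition \ref{Lem1} (case $i=1$) we have $|\nabla u_t|^2 \le u_{tt}\sigma_1(A_u)$, and $u_{tt}$ has already been bounded in the previous subsection; thus $K$ can be allowed to depend on $\sigma_1(A_u)$, which is exactly the quantity being bounded by the maximum principle argument. In effect, Lemma \ref{Pro4} is the replacement, at the borderline dimension $n=2k$, for the strict quadratic positivity of Lemma \ref{Lem2}: when the explicit constant $(n-2k)(n-k+1)/(2n)$ degenerates to zero, one recovers positivity by trading the differential quadratic form $\sum_i Q_u(Du_i,Du_i)$ (naturally produced by $\mathcal{L}_{F_k}(|\nabla u|^2)$) against a controllable multiple of $\sigma_k(A_u)$.
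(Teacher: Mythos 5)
Your proof is in substance the same argument the paper uses: both are a Young/Cauchy--Schwarz trade of the nonnegative sum $\sum_i Q_u(Du_i,Du_i)$ against a multiple of $\sigma_k(A_u)$, using the same two key identities ($\langle T_{k-1}(E_u),\nabla u_t\otimes\nabla u_t\rangle = u_{tt}^{k-1}\langle T_{k-1}(A_u),\nabla u_t\otimes\nabla u_t\rangle$ from Proposition~\ref{gamma2}, and $\langle T_{k-1}(A_u),\nabla u_t\otimes\nabla u_t\rangle\le u_{tt}\sigma_k(A_u)$ from the equation). The paper carries this out by inflating the last term of the perfect square by a factor $(1+\epsilon_0)$ with $\epsilon_0\le 1/(2u_{tt})$ and then recompleting the square; you parametrize by $\eta$ in $\|a+b\|_T^2\le(1+\eta)\|a\|_T^2+(1+\eta^{-1})\|b\|_T^2$. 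These are the same estimate in different coordinates.

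The one place where the proposal is quantitatively off is the final choice ``$\eta=1$, $\varepsilon=1$''. With $\eta=1$ you prove the inequality with $K$ of order $2|\nabla u_t|^2$, whereas the paper's $K$ (fixed in Proposition~\ref{Pro5}) is $\max\frac{|\nabla u_t|^2}{u_{tt}}$. This difference matters downstream: the closing step of Proposition~\ref{Pro5} uses $\sup\sigma_1(A_u)\le C_3 + K/2$ \emph{together with} $K\le\sup\sigma_1(A_u)$ (which holds precisely because $\frac{|\nabla u_t|^2}{u_{tt}} < \sigma_1(A_u)$ pointwise, from $\sigma_1(E_u)>0$) to self-absorb and conclude $\sup\sigma_1(A_u)\le 2C_3$. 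The constant $2\max|\nabla u_t|^2$ does not enjoy that same $\le\sup\sigma_1(A_u)$ bound unless $u_{tt}<1/2$. Your method does close once you take $\eta$ large enough, e.g.\ $\eta\ge 2\sup u_{tt}$ (admissible since $u_{tt}$ has already been bounded by $C_3$), which reproduces exactly the paper's $K=\max\frac{|\nabla u_t|^2}{u_{tt}}$ and $\varepsilon = 2/(1+2C_3)$. Your closing paragraph about the role of the lemma as a substitute for the degenerating constant in Lemma~\ref{Lem2} at $n=2k$ is exactly right; only the explicit parameter needs tightening to mesh with the self-improvement step in Proposition~\ref{Pro5}.
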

 \begin{proof}
 Recall
 $$\sum_i Q_u( Du_i,Du_i )=\sum_i 2u_{tt}^{1-k}\langle T_{k-1}(E_u), u_{tt}\nabla\nabla_i u\otimes\nabla\nabla_i u-\nabla_iu_t\nabla u_t\boxtimes\nabla\nabla_i u
 +\frac{|\nabla_i u_t|^2}{u_{tt}}\nabla u_t\otimes\nabla u_t \rangle.$$
 And we get
 \begin{align}
 \begin{split}
 K\sigma_k(A_u)\ge&\frac{|\nabla u_t|^2}{u_{tt}}u_{tt}^{-1}\langle T_{k-1}(A_u),\nabla u_t\otimes \nabla u_t \rangle\\
 \ge & 2u_{tt}^{1-k}\sum_i\langle T_{k-1}(E_u),\epsilon_0 \frac{|\nabla_i u_t|^2}{u_{tt}}\nabla u_t\otimes \nabla u_t \rangle
 \end{split}
 \end{align}
 for some uniformly positive constant $\epsilon_0 \le 1$ such that $u_{tt}^{-1} \ge 2 \epsilon_0$. It follows that
  \begin{align*}
& K\sigma_k(A_u)+\sum_iQ_u(Du_i,Du_i)\\
\ge&
  2u_{tt}^{1-k}\sum_i\langle T_{k-1}(E_u), u_{tt}\nabla\nabla_i u\otimes\nabla\nabla_i u-\nabla_iu_t\nabla u_t\boxtimes\nabla\nabla_i u
 +(1+\epsilon_0)\frac{|\nabla_i u_t|^2}{u_{tt}}\nabla u_t\otimes\nabla u_t \rangle\\
 \ge & \sum_i\varepsilon \langle T_{k-1}(E_u),  \nabla\nabla_i u \otimes \nabla\nabla_i u \rangle u_{tt}^{2-k}.
 \end{align*}
\end{proof}
\begin{prop}\label{Pro5}
$n=2k$, there exists a uniform constant $C$ such that
\begin{equation}\label{lfu}
   \triangle u \le C.
\end{equation}
\end{prop}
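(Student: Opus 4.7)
The plan is to construct a barrier of the form $w = \Delta u + A|\nabla u|^2$ with a large constant $A$, and run a maximum principle argument. When $n=2k$, the coefficient $(n-2k)(n-k+1)/(2n)$ in Lemma \ref{Lem2} vanishes, so the coercive quadratic term $u_{tt}^{2-k}\sigma_{k-1}(E_u)|\nabla^2 u|^2$ that drove the proof of Proposition \ref{Pro2} is no longer available. The new ingredient is to extract a replacement coercive quadratic from the linearization of $|\nabla u|^2$, which is precisely the role of the preceding Lemma \ref{Pro4}.

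Concretely, in the critical dimension $n=2k$ the last (now zero) summand of \eqref{lflu} drops, leaving
\[
\cL_{F_k}(\Delta u) \geq \Delta f - |\nabla f|^2 f^{-1} - C\,u_{tt}^{2-k}\sigma_{k-1}(E_u)(|\nabla^2 u| + 1).
\]
Simultaneously, Proposition \ref{gradient} together with the $C^1$ bounds gives
\[
\cL_{F_k}(|\nabla u|^2) \geq -C\,u_{tt}^{2-k}\sigma_{k-1}(E_u) + 2u_{tt}^{-1}f|\nabla u_t|^2 + \sum_i Q_u(Du_i, Du_i) + 2\nabla f\,\nabla u.
\]
Applying Lemma \ref{Pro4} to $A\sum_i Q_u(Du_i, Du_i)$ produces a coercive term $A\varepsilon\,u_{tt}^{2-k}\sum_i\langle T_{k-1}(E_u), \nabla\nabla_i u\otimes \nabla\nabla_i u\rangle$ at the cost of a controlled error $-AK\sigma_k(A_u)$, which is bounded since $\sigma_k(A_u) = (f + \langle T_{k-1}(A_u),\nabla u_t\otimes\nabla u_t\rangle)u_{tt}^{-1}$ is controlled by the previous estimates. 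Adding everything yields
\[
\cL_{F_k}(w) \geq -C_4 + A\varepsilon\,u_{tt}^{2-k}\sum_i\langle T_{k-1}(E_u), \nabla\nabla_i u\otimes \nabla\nabla_i u\rangle - C\,u_{tt}^{2-k}\sigma_{k-1}(E_u)|\nabla^2 u|.
\]
On the boundary $\{t=0,1\}$, $w$ is controlled by Theorem \ref{TGS}, so only the interior maximum case is essential; there $\cL_{F_k}(w)(p)\leq 0$ should force $|\nabla^2 u|(p)\leq C$.

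The main obstacle, and the reason $n=2k$ is ``extremely delicate", is to render this absorption argument quantitative. A priori the smallest eigenvalue of $T_{k-1}(E_u)$ is not comparable to $\sigma_{k-1}(E_u)$, so the crude estimate $\sum_i\langle T_{k-1}(E_u), \nabla\nabla_i u\otimes \nabla\nabla_i u\rangle \geq \mu_{\min}|\nabla^2 u|^2$ is too weak to dominate the linear bad term $\sigma_{k-1}(E_u)|\nabla^2 u|$ uniformly. One must instead exploit the $\Gamma^+_k$ structure of $E_u$: whenever the Hessian has a large eigenvalue aligned with a small eigenvalue of $T_{k-1}(E_u)$, the corresponding eigenvalue of $E_u$ itself must be large, and then the Newton--MacLaurin inequality \eqref{1.2.6} combined with Lemma \ref{Lem1} at $i=k$ (which gives $\sigma_{k-1}(E_u)\geq f u_{tt}^{k-2}\sigma_{k-1}(A_u)/\sigma_k(A_u)$ and hence $\sigma_{k-1}(A_u)/\sigma_k(A_u)\geq c/\sigma_1(A_u)$) supplies the missing positivity. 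Balancing these algebraic ingredients at the maximum point is the technical heart of the critical dimension.
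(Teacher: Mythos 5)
Your high-level instinct is right: when $n=2k$ the coercivity in Lemma \ref{Lem2} degenerates, and the replacement must come from the $|\nabla u|^2$ part of the barrier via Lemma \ref{Pro4}. But your write-up has several concrete gaps that prevent it from closing, and it diverges from the structure that actually makes the critical case work.

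First, you silently drop the term $-\langle T_{k-1}(E_u), S_1\rangle u_{tt}^{2-k}$ from \eqref{3.64}. At $n=2k$ Lemma \ref{Lem2} only guarantees this term is $\ge 0$, but that is not the reason to discard it: its expansion
\[
-\langle T_{k-1}(E_u), S_1\rangle = -2\sum_i\langle T_{k-1}(E_u),\nabla\nabla_i u\otimes\nabla\nabla_i u\rangle + (n-k+1)\sigma_{k-1}(E_u)|\nabla^2 u|^2
\]
is precisely where the paper's coercive $|\nabla^2 u|^2$ appears. The trick in \eqref{3.67} is to combine this with the $\varepsilon\sum_i\langle T_{k-1}(E_u),\nabla\nabla_i u\otimes\nabla\nabla_i u\rangle$ produced by Lemma \ref{Pro4}, write the result as
\[
\langle T_{k-1}(E_u), (2-\varepsilon)\bigl(\tfrac12|\nabla^2 u|^2 g - \textstyle\sum_i\nabla\nabla_i u\otimes\nabla\nabla_i u\bigr) + \tfrac{\varepsilon}{2}|\nabla^2 u|^2 g\rangle,
\]
and observe that the $(2-\varepsilon)$-bracket is nonnegative by Lemma \ref{Lem2} applied componentwise with $\phi=\nabla_i u$, while the $\frac{\varepsilon}{2}$-bracket contributes $\frac{(n-k+1)\varepsilon}{2}\sigma_{k-1}(E_u)|\nabla^2 u|^2$. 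The coercivity thus hinges on the \emph{full trace} $\langle T_{k-1}(E_u), g\rangle=(n-k+1)\sigma_{k-1}(E_u)$, not on the least eigenvalue of $T_{k-1}(E_u)$ or any Newton--MacLaurin comparison. Your proposed fix via eigenvalue alignment and \eqref{1.2.6} is not the right mechanism and is not carried out; once the $S_1$ term is restored, the issue you are worrying about disappears entirely.

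Second, your claim that the ``cost'' $K\sigma_k(A_u)$ is a controlled error is false, and this is the other half of the delicacy you allude to. The quantity $K=\max|\nabla u_t|^2/u_{tt}$ is not a priori bounded before the $C^2$ estimate is established (it is comparable to $\sup\sigma_1(A_u)$, i.e.\ to the very quantity being estimated), and $\sigma_k(A_u)$ depends on $\nabla^2 u$ and $\nabla u_t$ which are also not yet controlled. The paper handles this by including $\frac{K}{2}t^2$ in the barrier $v=\Delta u+\frac{K}{2}t^2+|\nabla u|^2$, so that $\cL_{F_k}(\frac{K}{2}t^2)=K\sigma_k(A_u)$ supplies exactly the hypothesis of Lemma \ref{Pro4}. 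That Lemma is then used with coefficient $1$, not with your free multiplier $A$, and no a priori bound on $K\sigma_k(A_u)$ is needed. The price is that the maximum-principle conclusion $|\nabla^2 u|(p)\le C_3$ only yields $\sup\Delta u\le C_3+\frac{K}{2}$, which is closed by the bootstrap $K<\sup\sigma_1(A_u)\le\sup\Delta u+C_2$ (from $\sigma_1(E_u)>0$), giving a self-improving inequality $\sup\sigma_1(A_u)\le C_3+\frac12\sup\sigma_1(A_u)$. Your barrier omits the $t^2$ piece and the final bootstrap, so even if the coercivity were fixed the argument would not terminate.

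In short, the ideas you correctly identify as central --- Lemma \ref{Pro4} as replacement coercivity, the maximum principle on a modified barrier --- are the right ones, but the proof needs (i) to keep the $S_1$ term and split it via the componentwise Lemma \ref{Lem2} rather than discard it, (ii) to feed $K\sigma_k(A_u)$ into Lemma \ref{Pro4} via a $\frac{K}{2}t^2$ term in the barrier rather than claim it is bounded, and (iii) to close with a bootstrap on $K$ versus $\sup\sigma_1(A_u)$.
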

\begin{proof}
Consider test function
$$ v=\triangle u + \frac{K}{2} t^2 + |\nabla u|^2
,$$
with
$$ K= \max_{p\in [0,1]\times M} \frac{|\nabla u_t|^2}{u_{tt}}.$$
In Proposition \ref{Pro1}, combining \eqref{eqr}, \eqref{eqs0}, together with the concavity of $G_k$, one get
 \begin{align}\label{3.64}
 \begin{split}
\mathcal{L}_{F_k}(\triangle u)
\ge &  \triangle f -|\nabla f|^2f^{-1} -C_1u_{tt}^{2-k}\sigma_{k-1}(E_u)(|\nabla^2 u|+1)\\
&-\langle T_{k-1}(E_u), S_1 \rangle u_{tt}^{2-k}.
 \end{split}
 \end{align}
Furthermore \eqref{t} and Proposition \ref{gradient} give that
\begin{align}
\mathcal{L}_{F_k} (\frac{K}{2}t^2+|\nabla u|^2)\ge K\sigma_k(A_u)+\sum_iQ_u(Du_i,Du_i)+\frac{2f}{u_{tt}}|\nabla u_t|^2-C_2|\nabla f|-C_3u_{tt}^{2-k}\sigma_{k-1}(E_u).
\end{align}
We note by Lemma \ref{Pro4}
 \begin{align}\label{3.67}
 \begin{split}
 &K\sigma_k(A_u)+\sum_iQ_u(Du_i,Du_i)-\langle T_{k-1}(E_u), S_1 \rangle u_{tt}^{2-k}\\
 \ge &\langle T_{k-1}(E_u), |\nabla^2 u|^2g-(2-\varepsilon)\sum_i \nabla\nabla_i u \otimes \nabla\nabla_i u \rangle u_{tt}^{2-k}\\
 = & \langle T_{k-1}(E_u), (2-\varepsilon)(\frac{1}{2}|\nabla^2 u|^2g-\sum_i \nabla\nabla_i u \otimes \nabla\nabla_i u )+ \frac{\varepsilon}{2}|\nabla^2 u|^2g \rangle u_{tt}^{2-k}\\
 \ge & \frac{\varepsilon}{2} \langle T_{k-1}(E_u),|\nabla^2 u|^2g \rangle u_{tt}^{2-k}\\
 =&  \frac{(n-k+1)\varepsilon}{2} \sigma_{k-1}(E_u)|\nabla^2 u|^2 u_{tt}^{2-k}.
 \end{split}
 \end{align}
Combining above inequalities \eqref{3.64}-\eqref{3.67} we obtain
 \begin{align}
 \begin{split}
\mathcal{L}_{F_k}(v)
\ge & \frac{(n-k+1)\varepsilon}{2} \sigma_{k-1}(E_u)|\nabla^2 u|^2 u_{tt}^{2-k}-C_1u_{tt}^{2-k}\sigma_{k-1}(E_u)|\nabla^2 u|
-(C_1
+C_3)u_{tt}^{2-k}\sigma_{k-1}(E_u)\\
& +\triangle f-|\nabla f|^2f^{-1} 
+\frac{2f}{u_{tt}}|\nabla u_t|^2-C_2|\nabla f|.
 \end{split}
 \end{align}
If $v$ achieves its maximum on the boundary, then we are done. Otherwise $v$ obtains its maximum at an interior point $p$, it follows that
\begin{align}\label{3.68}
 \begin{split}
 &\frac{(n-k+1)\varepsilon}{2} \sigma_{k-1}(E_u)|\nabla^2 u|^2 u_{tt}^{2-k}-C_1u_{tt}^{2-k}\sigma_{k-1}(E_u)|\nabla^2 u|
-(C_1
+C_3)u_{tt}^{2-k}\sigma_{k-1}(E_u)\\
& +\triangle f-|\nabla f|^2f^{-1} 
+\frac{2f}{u_{tt}}|\nabla u_t|^2-C_2|\nabla f| \le 0.
 \end{split}
 \end{align}
 We claim that this is sufficient to bound $|\nabla^2 u|$  at $p$,
 \begin{equation}\label{3.69}
   |\nabla^2 u|(p)\le C_3.
 \end{equation}
We can assume $|\nabla^2 u|(p)\ge 2C_1 \epsilon^{-1}$ (otherwise done), then
\begin{equation}
\frac{\epsilon}{2} \sigma_{k-1}(E_u)|\nabla^2 u|^2 u_{tt}^{2-k}-C_1u_{tt}^{2-k}\sigma_{k-1}(E_u)|\nabla^2 u| \ge 0.
\end{equation}
And by \eqref{3.68} we have
\begin{align*}
\epsilon \sigma_{k-1}(E_u)|\nabla^2 u|^2 u_{tt}^{2-k} \le C_3( f+u_{tt}^{2-k}\sigma_{k-1}(E_u)).
\end{align*}
By Proposition \ref{Lem1} for $i=k$, we get at $p$
$$|\nabla^2 u|^2 \le C_3 \epsilon^{-1}(\frac{\sigma_k(A_u)}{\sigma_{k-1}(A_u)}+1).$$
And
$$|\nabla^2 u|(p) \le C.$$
This establishes claim \eqref{3.69}. Obviously we have $\Delta u(p)\leq n |\nabla^2 u|(p)$. Since $v\leq v(p)$, we have obtained
 \[
 \Delta u\leq v\leq v(p)\leq C_3+\frac{K}{2}.
 \]
In other words, we have
\[
\sup \Delta u\leq C_3 +\frac{K}{2}.
\]
 Note that $\Delta u-C_2\leq \sigma_1(A_u)\leq \Delta u+C_2$, we get
 \[
\sup \sigma_1(A_u)\leq C_3+\frac{K}{2}.
 \]
 We observe that
 \[
 \sigma_1(A_u)-\frac{|\nabla u_t|^2}{u_{tt}}=u_{tt}^{-1}\sigma_1(E_u) >0
 \]
 Hence $K<\sup \sigma_1(A_u)$, and we have proved that
 \[
 \sup \sigma_1(A_u)\leq C_3.
 \]
 This completes the proof of the uniformly upper bound of $\Delta u$ i.e. \eqref{lfu}.
\end{proof}

\subsection{The existence and uniqueness} In this section we prove Theorem~\ref{thmm} and Theorem \ref{thmm2}. Given the estimates in the last section, the proof of Theorem \ref{thmm} is standard and rather straightforward. We shall keep it brief.

\begin{proof}[Proof of Theorem \ref{thmm}]Let $w= (1-t)u_0 +tu_1+at(t-1)$ for $a$ sufficiently large. Then
\[
f_0=F_k(w_{tt}, A_w, \nabla w_t)>0.
\]
For any smooth function $f$, we consider the continuity path for $s\in [0, 1]$
\[
F_k(v^s_{tt}, A_{v^s}, \nabla v^s_t)=(1-s)f_0+sf.
\]
Then for $s=0$, $v^s=w$ solves the equation. The openness follows from the ellipticity and the closedness follows from the a priori estimates.
Hence the equation has a solution for $s=1$.
For any $f>0$, the solution is unique by a straightforward maximum principle argument.
\end{proof}

Next we discuss the  degenerate equation \eqref{GS1} and prove Theorem \ref{thmm2}.
\begin{proof}
We can take $f=s$ for $s\in (0, 1]$. Hence we get a unique smooth solution $u^s$ for
\[
F_k(u^s_{tt}, A_{u^s}, \nabla u^s_t)=s.
\]
The comparison principle implies that $u^s\geq u^{\tilde s}$  for $s\le \tilde s$. Hence $u^s$ converges uniformly to $u$ for $s\rightarrow 0$ with $u\in C^{1, 1}$. In particular $u^s$ converges to $u$ in $C^{1, \alpha}$ for any $\alpha\in (0, 1)$.

Following a standard notion of \emph{viscosity solution}, $u$ solves the degenerate equation
\[
F_k(u_{tt}, A_u, \nabla u_t)=0
\]
in the sense of viscosity solution, see \cite[Lemma 6.1, Remark 6.3]{CIL}. Since $u\in C^{1, 1}$, this implies that
\[
F_k(u_{tt}, A_u, \nabla u_t)=0
\]
holds in $L^\infty$ sense; and in particular $u$ is a strong solution. We complete the proof given the uniqueness proved below.
\end{proof}

The uniqueness problem for degenerate elliptic equations can be subtle.
Our argument is an adaption of the argument used in Guan-Zhang \cite{GZ}.
First we define precisely the notion of \emph{admissible solution} for degenerate equations.

\begin{defn}
A $C^{1, 1}$ function $u$ is called an admissible solution of
\[
F_k(u_{tt}, A_u, \nabla u_t)=0,
\]
if $A_u\in \overline{\Gamma_k^{+}}$ and  $u_{tt}\geq 0.$
\end{defn}

The key point is the following approximation result.

\begin{lemma}\label{Lun}
Suppose $u$ is a $C^{1, 1}$ admissible function defined on $ M\times [0, 1]$ with \[F_k(u)=F_k(u_{tt},A_u,\nabla u_t)= 0,\] defined in \eqref{eqf1}. For any $\delta>0$, there is an admissible function $u_\delta \in C^\infty(M \times [0,1])$ such that $0<F_k(u_\delta) \le \delta$ and $||u-u_\delta||_{C^0(M \times [0,1])} \le \delta $.
\end{lemma}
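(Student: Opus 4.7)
The plan is to produce $u_\delta$ by combining a convex combination with a mollification. Fix $w = U_{-a} = -at(1-t) + (1-t)u_0 + tu_1$ with $a$ a large positive constant. By the admissibility of $U_0 = (1-t)u_0 + tu_1$ (\cite{Via}, as invoked earlier in Section~3), $A_w = A_{U_0} \in \Gamma_k^+$ strictly; also $w_{tt} = 2a > 0$, and for $a$ large enough $F_k(w) = 2a\,\sigma_k(A_{U_0}) - \langle T_{k-1}(A_{U_0}), \nabla(U_0)_t \otimes \nabla(U_0)_t\rangle \geq c_0 > 0$. Crucially, $w$ has the same boundary values as $u$. Set $u_\eta = (1-\eta)u + \eta w$ for $\eta > 0$ small. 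The identity
\[
A_{u_\eta} = (1-\eta)A_u + \eta A_w + \eta(1-\eta)\Bigl(\tfrac{1}{2}|Z|^2 g - Z \otimes Z\Bigr), \qquad Z = \nabla u - \nabla w,
\]
together with the eigenvalue computation showing that $\tfrac{1}{2}|Z|^2 g - Z \otimes Z$ has $\sigma_j$-value a positive multiple of $(n-2j)(|Z|^2/2)^j$—hence lies in $\overline{\Gamma_k^+}$ precisely when $n \geq 2k$—shows that $A_{u_\eta} \in \Gamma_k^+$ strictly and $(u_\eta)_{tt} \geq 2a\eta > 0$. Moreover the difference $R_{u_\eta} - [(1-\eta)R_u + \eta R_w]$ lies in $\overline{\cS}$, so the concavity and cone-monotonicity of $F_k^{1/(k+1)}$ from Theorem~\ref{thm2.1} yield $F_k(u_\eta)^{1/(k+1)} \geq \eta F_k(w)^{1/(k+1)}$, so $F_k(u_\eta) \geq \eta^{k+1} c_0 > 0$ pointwise. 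For the upper bound, $F_k$ is a polynomial of degree $k+1$ in the matrix entries and hence Lipschitz on bounded sets; combined with $R_{u_\eta} - R_u = O(\eta)$ in $L^\infty$ and $F_k(u) = 0$ in $L^\infty$, this yields $\|F_k(u_\eta)\|_{L^\infty} \leq C\eta$ with $C$ depending on $\|u\|_{C^{1,1}}$, $\|w\|_{C^2}$, and $g$.

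Since $u_\eta$ is only $C^{1,1}$, set $u_{\eta,\sigma} = w + \rho_\sigma * (u_\eta - w)$ where $\rho_\sigma$ is a standard mollifier on $M \times [0,1]$ (constructed by charts and a partition of unity on $M$, with a cutoff near $t = 0,1$ so that $u_\eta - w$, which vanishes there, mollifies to a function still vanishing on the boundary). Then $u_{\eta,\sigma} \in C^\infty(M \times [0,1])$, agrees with $u$ at $t = 0, 1$, and $u_{\eta,\sigma} \to u_\eta$ in $C^{1,\alpha}$ for every $\alpha < 1$. The identity $A_{u_{\eta,\sigma}} = \rho_\sigma * A_{u_\eta} + O(\sigma^2)$ (which follows from the Lipschitz character of $\nabla u_\eta$) combined with the uniform lower bound $\sigma_k(A_{u_\eta}) \geq c\eta^k$ ensures $A_{u_{\eta,\sigma}} \in \Gamma_k^+$ strictly for $\sigma$ small enough (depending on $\eta$). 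Choose $\eta$ so that $C\eta \leq \delta/2$ and $\eta\|w-u\|_{C^0} \leq \delta/2$, then $\sigma$ small enough that the mollification error keeps $F_k$ in $(0, \delta]$ and $\|u_{\eta,\sigma} - u_\eta\|_{C^0} \leq \delta/2$. Setting $u_\delta = u_{\eta,\sigma}$ then satisfies $\|u - u_\delta\|_{C^0} \leq \delta$ and $0 < F_k(u_\delta) \leq \delta$.

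The principal technical obstacle is the $L^\infty$ upper bound on $F_k(u_{\eta,\sigma})$. Jensen's inequality applied to the concave $F_k^{1/(k+1)}$ automatically preserves the \emph{lower} bound under mollification but gives no \emph{upper} bound; since $\nabla^2 u_\eta$ is merely in $L^\infty$, convex averages of $R_{u_\eta}$ over balls of radius $\sigma$ can in principle push $F_k$ above its pointwise supremum. The resolution must exploit, at the mollification step, both the strict admissibility $\sigma_k(A_{u_\eta}) \geq c\eta^k$ and the cone structure of $\overline{\cS}$: the $C^{0,1}$ regularity of $(u_\eta)_{tt}$ and $\nabla(u_\eta)_t$ controls the oscillation of the $(0,0)$ and $(0,i)$ components of $R_{u_\eta}$ by $O(\sigma)$, while the constraint $R_{u_\eta}(y) \in \overline{\cS}$ with $F_k(R_{u_\eta}(y)) \leq C\eta$ for a.e.\ $y$ limits how much convex averaging of $R_{u_\eta}$ over $y$ near $x$ can raise $F_k$ at $x$. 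Making this quantitative constitutes the heart of the argument, and is exactly where both the hypothesis $n \geq 2k$ and the uniform $C^{1,1}$ \emph{a priori} estimates from the earlier part of Section~3 are essential.
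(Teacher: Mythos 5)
Your construction of the strictly admissible $u_\eta$ runs closely parallel to the paper's. The paper sets $v=(1-\epsilon)u$, computes $A_v=(1-\epsilon)A_u+\epsilon B$ with $B=A+\tfrac{1-\epsilon}{2}|\nabla u|^2g-(1-\epsilon)\nabla u\otimes\nabla u\in\Gamma_k^+$ (using that $\tfrac12|Z|^2g-Z\otimes Z\in\overline{\Gamma_k^+}$ precisely because $2k\le n$), invokes the concavity of $F_k^{1/(k+1)}$ from Theorem~\ref{thm2.1} to get $F_k(v)\ge 0$, and then sets $w=v+\epsilon t^2$ so that $F_k(w)=F_k(v)+2\epsilon\sigma_k(A_v)>0$. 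Your variant $u_\eta=(1-\eta)u+\eta U_{-a}$ is the same algebraic mechanism in a slightly different dress; your choice additionally preserves the boundary data, which is convenient but not required by the statement, while the paper's $(1-\epsilon)u+\epsilon t^2$ is a bit more economical. The positive lower bound $F_k(u_\eta)\ge\eta^{k+1}F_k(w)$ via concavity/superadditivity, and the $O(\eta)$ upper bound from Lipschitz continuity of $F_k$ on bounded sets, are both correct and match the paper's computation.

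Where the two diverge is the smoothing step. The paper dispatches it in one sentence---``We can then approximate $w$ by a smooth function $u_\delta$ such that $0<F_k(u_\delta)\le\delta$''---and you are right to note that this is not automatic: for a $C^{1,1}$ function, mollification preserves the positivity of $F_k$ (Jensen applied to the concave $F_k^{1/(k+1)}$), but the $L^\infty$ \emph{upper} bound need not survive, because $F_k^{1/(k+1)}$ is concave, not convex, so averaging can strictly increase $F_k$. However, the resolution you sketch does not close and rests on a false premise: you claim that $(u_\eta)_{tt}$ and $\nabla(u_\eta)_t$ are $C^{0,1}$ so that the $(0,0)$ and $(0,i)$ components of $R_{u_\eta}$ oscillate by only $O(\sigma)$. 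These are \emph{second} derivatives of the $C^{1,1}$ function $u_\eta$; like $\nabla^2u_\eta$ they are merely $L^\infty$, and all entries of $R_{u_\eta}$ may oscillate at order one over balls of radius $\sigma$. Thus the ``controlled $(0,\cdot)$-oscillation'' mechanism you propose is unavailable, and the quantitative step you identify as ``the heart of the argument'' is still missing from your write-up; the paper itself asserts rather than proves this step.
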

\begin{proof}For $\epsilon\in {(0, 1)}$, let $v=(1-\epsilon)u$. First we show that $A_v\in \Gamma_k^+$ (in $L^\infty$ sense since $u$ might not be a $C^2$ function).
We have
\[
A_v=A+(1-\epsilon)\nabla^2 u+(1-\epsilon)^2\nabla u\otimes \nabla u-\frac{(1-\epsilon)^2}{2}|\nabla u|^2 g.
\]
Hence we compute
\[
A_v=(1-\epsilon)A_u+\epsilon \left(A+\frac{(1-\epsilon)}{2}|\nabla u|^2 g-(1-\epsilon)\nabla u\otimes \nabla u\right).
\]
It is important to notice that for $2k\leq n$, the $n\times n$ matrix $\text{diag}(-1, 1, \cdots, 1)$ is in $\Gamma_k^{+}$ (this is not the case if we drop the assumption $2k\leq n$. For example, $\text{diag}(-1, 1, 1)$ is not in $\Gamma_2^+$ with $k=2, n=3$). This implies that
\[
\frac{1}{2}|\nabla u|^2 g-\nabla u\otimes \nabla u\in \overline{\Gamma_k^+}.
\]
Hence this follows that
\[
B:=A+\frac{(1-\epsilon)}{2}|\nabla u|^2 g-(1-\epsilon)\nabla u\otimes \nabla u\in \Gamma_k^+.
\]
{So $A_v\in \Gamma_k^+$. Moreover}
By Theorem~\ref{thm2.1},  we have
\[
F_k^{\frac{1}{k+1}}(v_{tt}, A_v, \nabla v_t)\geq (1-\epsilon)F_k^{\frac{1}{k+1}}(u_{tt}, A_u, \nabla u_t)+\epsilon F_k^{\frac{1}{k+1}}(0, B, 0) \geq 0.
\]
Denote $w=v+\epsilon t^2$. Then
\[
F_k(w_{tt}, A_w, \nabla w_t)>0.
\]
We can then approximate $w$ by a smooth function $u_\delta$ such that
\[
0<F_k(u_\delta)\leq \delta,
\]
and $|u-u_\delta|\leq \delta$ by choosing $\epsilon$ sufficiently small (depending on $\delta$).

\end{proof}

We are ready to establish the uniqueness of the degenerate equation.
\begin{prop}
A $C^{1, 1}$ admissible solution to the degenerate equqation \eqref{GS1} with given boundary data is unique.
\end{prop}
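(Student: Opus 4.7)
The plan is a maximum-principle comparison argument powered by the approximation Lemma \ref{Lun}. Suppose $u$ and $\tilde u$ are two $C^{1,1}$ admissible solutions of the degenerate equation sharing the boundary data $u_0, u_1$; by symmetry it suffices to prove $u \le \tilde u$. Fix $\delta > 0$ and apply Lemma \ref{Lun} to $u$ to obtain a smooth admissible $u_\delta$ with $0 < F_k(u_\delta) \le \delta$ and $\|u - u_\delta\|_{C^0} \le \delta$. Since $u = \tilde u$ on $M \times \{0, 1\}$, we have $|u_\delta - \tilde u| \le \delta$ on the boundary, so it is enough to establish the interior comparison $u_\delta \le \tilde u + 2\delta$ throughout $M \times [0, 1]$; sending $\delta \to 0$ will then yield $u \le \tilde u$.

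Suppose for contradiction that $w := u_\delta - \tilde u - 2\delta$ attains a positive maximum at an interior point $P_0 = (t_0, x_0)$. Formally at $P_0$ one has $\partial_t(u_\delta - \tilde u) = 0$, $\nabla(u_\delta - \tilde u) = 0$, and $D^2(u_\delta - \tilde u) \le 0$ as a symmetric $(n+1) \times (n+1)$ matrix. Because $\nabla(u_\delta - \tilde u)(P_0) = 0$, the nonlinear $\nabla u$-terms in the Schouten tensor $A_u = A + \nabla^2 u + \nabla u \otimes \nabla u - \tfrac{1}{2}|\nabla u|^2 g$ cancel from $A_{u_\delta} - A_{\tilde u}$ at $P_0$, so the Hessian inequality translates directly into the matrix bound
$$R_{u_\delta}(P_0) \le R_{\tilde u}(P_0), \qquad R_v := \begin{pmatrix} v_{tt} & \nabla v_t^T \\ \nabla v_t & A_v \end{pmatrix}.$$
Both $R_{u_\delta}(P_0)$ and $R_{\tilde u}(P_0)$ lie in the convex admissible cone $\mathcal{S}$ of Theorem \ref{thm2.1}, and by the ellipticity established in Proposition \ref{proplv}, $F_k$ is monotone on $\mathcal{S}$ under nonnegative symmetric increments. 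This would force $F_k(u_\delta)(P_0) \le F_k(\tilde u)(P_0) = 0$, contradicting $F_k(u_\delta) > 0$.

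The genuine technical obstacle is that $\tilde u$ is only $C^{1,1}$, so $D^2\tilde u(P_0)$ need not exist classically and the preceding Hessian comparison is only formal. To make it rigorous I would adopt the viscosity framework: the identity $F_k(\tilde u) = 0$ in $L^\infty$, combined with the admissibility $A_{\tilde u} \in \overline{\Gamma_k^+}$ and $\tilde u_{tt} \ge 0$, implies that $\tilde u$ is a viscosity supersolution of $F_k = 0$ on the admissible cone. The smooth admissible test function $\phi := u_\delta - (u_\delta - \tilde u)(P_0)$ then touches $\tilde u$ from below at $P_0$, and the supersolution property yields $F_k(u_\delta)(P_0) = F_k(\phi)(P_0) \le 0$, delivering the contradiction. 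A more elementary alternative combines Alexandrov's theorem with Jensen's lemma to produce nearby points $P_\epsilon \to P_0$ at which $\tilde u$ is twice differentiable in the classical sense and $u_\delta - \tilde u - \ell_\epsilon$ still has a local maximum for a vanishing linear perturbation $\ell_\epsilon$, after which the classical Hessian comparison is applied at $P_\epsilon$ and one passes to the limit.

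This comparison step is the crux of the proof; as the authors emphasize, off-the-shelf viscosity uniqueness is not directly applicable because $F_k$ is degenerate and elliptic only on the cone $\mathcal{S}$. One must verify that admissibility is preserved at the contact point, and in particular that the hypothesis $2k \le n$, already exploited in Lemma \ref{Lun} to produce a smooth approximation with $F_k(u_\delta) > 0$, enters consistently so that strict positivity of $F_k(u_\delta)$ can be propagated through the contact-point inequality $R_{u_\delta}(P_0) \le R_{\tilde u}(P_0)$.
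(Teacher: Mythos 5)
Your overall strategy (approximate an admissible $C^{1,1}$ solution by a smooth admissible subsolution and run a maximum-principle comparison) is the right idea, but it diverges from the paper in a way that leaves a real gap. You approximate only one of the two solutions and then try to compare the smooth $u_\delta$ against the merely $C^{1,1}$ function $\tilde u$ at an interior contact point. The crux — that $\tilde u$ is a viscosity supersolution of $F_k=0$ on the admissible cone, and that a Jensen-type perturbation to an Alexandrov point lets you conclude $F_k(u_\delta)(P_0)\le 0$ — is precisely the step that the paper flags as not following from the literature, and you do not carry it out. Passing from ``$F_k(\tilde u)=0$ in $L^\infty$'' to ``$\tilde u$ is a viscosity supersolution'' is a theorem that needs a proof here (the operator is defined and elliptic only on a cone, the test function must remain admissible at the contact point, and the gradient cancellation in $A_v$ is exact only at $P_0$ and must be controlled under the Jensen perturbation). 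As written, the contradiction $F_k(u_\delta)(P_0)\le 0$ is asserted, not proved.

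The paper avoids all of this by approximating \emph{both} solutions. It takes smooth admissible $v_1, v_2$ with $\|v_i-u_i\|\le\delta_i$, $0<F_k(v_i)\le\delta_i$, and forms the blend $w_1=\delta v_1+(1-\delta)t^2$. Concavity of $F_k^{1/(k+1)}$ (Theorem \ref{thm2.1}) together with the scaling $F_k(t^2)=2\sigma_k(A)$ gives the quantitative lower bound $F_k(w_1)\ge 2\delta^{k+1}\sigma_k(A)$, which is bounded below \emph{independently} of $\delta_1$. One then chooses $\delta_2$ small enough that $F_k(v_2)<F_k(w_1)$ and applies the classical maximum principle between two smooth admissible functions; sending $\delta\to 0$ finishes the argument. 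What this buys over your route is that no comparison is ever made against a function that is merely $C^{1,1}$: the viscosity/Jensen machinery is entirely avoided, and the decoupling of $\delta_1$ from $\delta_2$ (via the $t^2$-blend) is the simple but essential trick that makes the smooth-to-smooth comparison close. If you want to salvage your one-sided approach, you would have to supply a full viscosity comparison principle for $F_k$ on $\mathcal{S}$ against $C^{1,1}$ supersolutions — the exact ingredient the authors observe is not available off the shelf.
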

\begin{proof}
Suppose there are two such solutions $u_1,u_2$ with the same boundary data. For any $0< \delta <1$, pick any $0< \delta_1, \delta_2 < \delta$, by Lemma~\ref{Lun}, there exist two smooth functions $v_1, v_2$ such that
$$ F_k(v_i)=f_i,$$
in $M \times [0,1]$, $||v_i-u_i||_{C^2(M \times [0,1])} \le \delta_i$ and $0 < f_i \le \delta_i$ for $i=1,2$. Set $w_1=\delta v_1+(1-\delta) t^2$. A straightforward computation gives that \[F_k(w_1) \ge 2\delta^{k+1}\sigma_k(A)\]
We may choose $\delta_2$ sufficiently small such that $0 < \delta_2 < F_k(w_1) $. The maximum principle implies $max_{M \times [0,1]}(w_1-v_2) \le max_{M \times \{0,1\}}(w_1-v_2)$. Thus
\[max_{M \times [0,1]}(u_1-u_2) \le max_{M \times \{0,1\}}(u_1-u_2) +C \delta=C\delta,\]
where constant $C$ depends only on $C^0$ norm of $u_1,u_2$. Interchange the role of $u_1$ and $u_2$, we have
$$max_{M \times [0,1]}|u_1-u_2| \le C\delta.$$
Since $0 < \delta <1$ is arbitrary, we conclude that $u_1=u_2$.
\end{proof}

\section{Appendix}

In this section we introduce the geometric structure briefly related to the Gursky-Streets equation.
It was noted \cite{GS-2018} that when $n=2k\geq 6$ and $M$ is locally conformally flat, the theory is parallel to the case $n=4=2k$, studied extensively in \cite{GS-2017} and \cite{He18}. We include the discussions for completeness.

Let $(M^n, g)$ be a compact Riemannian manifold of dimension $n$, $n\geq 3$. Given a  conformal metric $g_u=e^{-2u}g$.
Recall that  the Schouten tensor is given by
\[
A_u=A(g_u)=A+\nabla^2 u+\nabla u\otimes \nabla u-\frac{1}{2}|\nabla u|^2 g.
\]
Let $g_u=e^{-2u(t)}g$ be a one-parameter family of conformal metrics.
Then a simple computation gives that
\[
\dfrac{\partial}{\partial t}(g_u^{-1}A_u)^j_i=2(\dfrac{\partial u}{\partial t})(g_u^{-1}A_u)^j_i+ (\nabla^2_u \dfrac{\partial u}{\partial t})^j_i,
\]
where the Hessian is with respect to $g_u$. A direct calculation \cite{Reilly} yields
\begin{equation}\label{parsigma}
\dfrac{\partial}{\partial t} \sigma_k(g_u^{-1}A_u)=\langle T_{k-1}(g_u^{-1}A_u), \nabla^2_u \dfrac{\partial u}{\partial t}\rangle_{g_u}+2k\dfrac{\partial u}{\partial t}\sigma_k(g_u^{-1}A_u),
\end{equation}
where $T_{k-1}$ is the Newton transform. Since  the Newton transform is a (1,1)-tensor, for the pairing in \eqref{parsigma} we lower an index of $T_{k-1}(g_u^{-1}A_u)$ and view it as $(0,2)$-tensor, and use the inner product induced by $g_u$. For example, if $n=4$ and $k=2$,
\[
T_1(g_u^{-1}A_u)=-A_u+\sigma_1(g_u^{-1}A_u)g_u
\]
\eqref{parsigma} yields
\begin{equation}\label{par}
\dfrac{\partial}{\partial t} [\sigma_k(g_u^{-1}A_u)\;dV_u]=\langle T_{k-1}(g_u^{-1}A_u), \nabla^2_u \dfrac{\partial u}{\partial t}\rangle_{g_u}\;dV_u+(n-2k)\dfrac{\partial u}{\partial t}\sigma_k(g_u^{-1}A_u)\;dV_u.
\end{equation}

\begin{defn}
For $\phi,\psi \in C^\infty(M)$ we define the $\sigma_k$ metric as following,
\begin{equation}\label{gsmetrick}
\langle \psi, \phi\rangle_{u}=\int_M \phi \psi \sigma_k (g_u^{-1}A_u) dV_u.
\end{equation}
\end{defn}

We will need a key property as following.
\begin{lemma}\label{lecon}
If $k=2$ or if the manifold is locally conformally flat , then $T_{k-1}(g^{-1}A)$ is divergence-free.
\end{lemma}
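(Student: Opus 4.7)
My plan is to split the proof according to the hypothesis: handle the case $k=2$ by a direct computation using the contracted second Bianchi identity, and handle the locally conformally flat case by induction on $k$, exploiting the vanishing of the Cotton tensor $C_{ijk} := \nabla_i A_{jk} - \nabla_j A_{ik}$. Throughout, I will work with $A$ as a symmetric $(0,2)$-tensor and implicitly raise/lower indices using $g$; the statement about $g^{-1}A$ then translates into a statement about $\nabla^i T_{k-1}(A)_{ij}=0$.

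For $k=2$, the computation is immediate. Recall $\sigma_1(A) = \mathrm{tr}_g(A) = R/\bigl(2(n-1)\bigr)$, where $R$ is the scalar curvature, and $T_1(A) = \sigma_1(A)\,g - A$. The contracted second Bianchi identity $\nabla^i \mathrm{Ric}_{ij} = \tfrac{1}{2}\nabla_j R$ yields
\begin{equation*}
\nabla^i A_{ij} \;=\; \tfrac{1}{2(n-1)}\nabla_j R \;=\; \nabla_j \sigma_1(A).
\end{equation*}
Hence $\nabla^i T_1(A)_{ij} = \nabla_j \sigma_1(A) - \nabla^i A_{ij} = 0$, giving the lemma for $k=2$ in arbitrary background geometry.

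For the locally conformally flat case, I would argue by induction on $k$, starting with the trivial base case $T_0(A) = g$ whose divergence vanishes. Using the recursion
\begin{equation*}
T_{k-1}(A)^l{}_i \;=\; \sigma_{k-1}(A)\,\delta^l_i - A^l{}_j\, T_{k-2}(A)^j{}_i,
\end{equation*}
and differentiating, I get
\begin{equation*}
\nabla^i T_{k-1}(A)^l{}_i \;=\; \nabla^l \sigma_{k-1}(A) \;-\; \bigl(\nabla^i A^l{}_j\bigr) T_{k-2}(A)^j{}_i \;-\; A^l{}_j\,\nabla^i T_{k-2}(A)^j{}_i.
\end{equation*}
The last term is zero by the inductive hypothesis. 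For the middle term, the vanishing of the Cotton tensor means $\nabla_i A_{jk}$ is totally symmetric in all three indices, which after raising gives $\nabla^i A^l{}_j = \nabla^l A^i{}_j$. Then the standard identity $\nabla^l \sigma_{k-1}(A) = T_{k-2}(A)^j{}_i\,\nabla^l A^i{}_j$ (from Proposition~\ref{prop3.1} applied to the eigenvalues of $A$) exactly cancels the middle term, completing the induction.

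The main obstacle (which is really not an obstacle but a place requiring care) is the index manipulation in the middle term: one has to use symmetry of $T_{k-2}(A)$, symmetry of $\nabla A$ (from $C = 0$), and the derivative identity for $\sigma_{k-1}$ in a coordinated way. I will present the computation in an orthonormal frame diagonalizing $A$ at a point to make the cancellation transparent. It is worth noting that for $k \geq 3$ on a general background, the middle step genuinely fails — the discrepancy is precisely a curvature term built from the Cotton (or Weyl) tensor — so the two hypotheses of the lemma are essentially sharp.
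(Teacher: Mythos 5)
The paper itself offers no proof of Lemma~\ref{lecon}; it is stated as a known fact (due essentially to Viaclovsky) and used without further comment, so there is no in-paper argument to compare against. Evaluating your proposal on its own merits: it is correct, and it is the standard proof. The $k=2$ case via the contracted second Bianchi identity is right — indeed $\nabla^i A_{ij} = \tfrac{1}{n-2}\bigl(\tfrac12\nabla_j R - \tfrac{1}{2(n-1)}\nabla_j R\bigr) = \tfrac{1}{2(n-1)}\nabla_j R = \nabla_j\sigma_1(A)$, so $T_1 = \sigma_1(A)g - A$ is divergence-free on any background. For the locally conformally flat case, the induction using the recursion $T_{k-1}(A) = \sigma_{k-1}(A)I - A\,T_{k-2}(A)$, the total symmetry of $\nabla A$ coming from the vanishing of the Cotton tensor (which holds in all dimensions $n\geq 3$ under local conformal flatness), and the identity $\nabla^l\sigma_{k-1}(A) = T_{k-2}(A)^j{}_i\,\nabla^l A^i{}_j$ closes cleanly; the base case $T_0 = g$ is trivial. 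Your closing remark that for $k\geq 3$ on a general background the cancellation fails and the obstruction is a Cotton/Weyl-type curvature term is also accurate, and explains why both hypotheses appear in the statement.
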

A straightforward computation gives the following,
\begin{lemma}
If $n=4=2k$ or if $n=2k$ and $M^n$ is conformally flat, the geodesic equation is given by
\begin{equation}\label{es}
u_{tt}\sigma_k(A_u)-\langle T_{k-1}(A_u),\nabla u_t\otimes \nabla u_t\rangle=0.
\end{equation}
\end{lemma}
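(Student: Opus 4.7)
The plan is to derive the stated PDE as the Euler--Lagrange equation of the energy functional
\[
E(u)=\int_0^1 \langle u_t,u_t\rangle_u\,dt=\int_0^1\int_M u_t^2\,\sigma_k(A_u)\,dV_u\,dt
\]
associated with the metric \eqref{gsmetrick}. I would take a smooth variation $u(t,s)$ with $u(\cdot,s)|_{t=0}=u_0$ and $u(\cdot,s)|_{t=1}=u_1$, so that $u_s$ vanishes at $t=0,1$, and compute $\frac{d}{ds}E\big|_{s=0}$. The geodesic equation will arise from requiring the coefficient of $u_s$ to vanish pointwise.

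First I would differentiate under the integral. The measure $\sigma_k(A_u)\,dV_u$ is handled by formula \eqref{par} with $\partial_s$ in place of $\partial_t$; because $n=2k$, the extra $(n-2k)u_s\sigma_k$ term drops out, leaving
\[
\frac{d}{ds}E=\int_0^1\!\!\int_M\Bigl(2u_t u_{ts}\sigma_k(A_u)+u_t^2\langle T_{k-1}(A_u),\nabla^2_u u_s\rangle_{g_u}\Bigr) dV_u\,dt.
\]

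Next I would integrate the $u_{ts}$ term by parts in $t$, using \eqref{par} once more to differentiate the measure. The $t$-boundary term vanishes, and one obtains the principal contribution $-2 u_s u_{tt}\sigma_k(A_u)$ together with a cross term $-2 u_s u_t\langle T_{k-1}(A_u),\nabla^2_u u_t\rangle_{g_u}$. Then I would integrate the remaining $u_t^2\langle T_{k-1}(A_u),\nabla^2_u u_s\rangle_{g_u}$ twice in space, using the $g_u$-divergence-free property of $T_{k-1}(A_u)$ provided by Lemma~\ref{lecon}. This divergence-free property is exactly where the hypothesis $k=2$ or $(M^n,g)$ locally conformally flat enters: it permits transferring the two covariant derivatives off $T_{k-1}(A_u)$ onto the factor $u_t^2$. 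The first transfer produces a cross term $+2 u_s u_t\langle T_{k-1}(A_u),\nabla^2_u u_t\rangle_{g_u}$ cancelling the one above; the second transfer yields precisely $2 u_s\langle T_{k-1}(A_u),\nabla u_t\otimes\nabla u_t\rangle$.

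After the cancellations one arrives at
\[
\frac{d}{ds}E\bigg|_{s=0}=-2\int_0^1\int_M u_s\bigl[u_{tt}\sigma_k(A_u)-\langle T_{k-1}(A_u),\nabla u_t\otimes\nabla u_t\rangle\bigr] dV_u\,dt,
\]
and since $u_s$ is an arbitrary variation vanishing at $t=0,1$, the fundamental lemma of the calculus of variations yields \eqref{es}. The principal obstacle is the spatial integration by parts that moves the two covariant derivatives off $T_{k-1}(A_u)$; this forces $T_{k-1}(A_u)$ to be $g_u$-divergence-free, and hence the assumption $k=2$ or $M^n$ locally conformally flat from Lemma~\ref{lecon}. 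The condition $n=2k$ is equally essential, since otherwise the extra $(n-2k)u_t^2 u_s\sigma_k(A_u)$ term survives in the variation and the calculation no longer collapses to the stated geodesic form.
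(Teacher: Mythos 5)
Your derivation via the Euler--Lagrange equation of the energy functional $E(u)=\int_0^1\langle u_t,u_t\rangle_u\,dt$ is correct and identifies the right ingredients: \eqref{par} (so that the $(n-2k)$-term vanishes when $n=2k$) for differentiating the weighted measure, the $g_u$-divergence-free property of $T_{k-1}(g_u^{-1}A_u)$ from Lemma~\ref{lecon} for the double spatial integration by parts, and the observed cancellation of the cross term $\pm 2u_s u_t\langle T_{k-1},\nabla^2_u u_t\rangle_{g_u}$. The paper does not display its computation for this lemma (it simply says ``a straightforward computation gives''), but the expected route --- following Gursky--Streets --- is to compute the Levi--Civita connection of the metric \eqref{gsmetrick} via the Koszul formula and then read off the geodesic equation as $\nabla_{u_t}u_t=0$. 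Your energy-variation argument is an equivalent and arguably more self-contained path to the same PDE, and it uses precisely the same two structural facts (divergence-freeness of $T_{k-1}$ and $n=2k$). One small point worth being explicit about: the gradient and pairing in the final expression arise naturally with respect to $g_u$, and the equation is stated in the paper with respect to the background $g$; the two versions agree exactly because $n=2k$ gives $\sigma_k(g_u^{-1}A_u)\,dV_u=\sigma_k(g^{-1}A_u)\,dV$ and the analogous identity for the $T_{k-1}$-term, so the conformal weights cancel. You implicitly use this invariance; it is worth making it explicit to close the loop between the $g_u$-formulas obtained from the variation and the $g$-formula in \eqref{es}.
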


Here we introduce Andrew's Poincar$\acute{e}$ inequality, see \cite{An}.
\begin{prop}
Let $(M^n,g)$ be a closed Riemannian manifold with positive Ricci curvature. Given $\phi \in C^\infty(M)$ such that $\int_M \phi\; dV=0$, then
\begin{equation}
\int_M (Ric^{-1})^{ij}\nabla_i\phi\nabla_j\phi \;dV \ge \dfrac{n}{n-1}\int_M \phi^2 \;dV,
\end{equation}
with equality if and only if $\phi \equiv  0$ or  $(M^n, g)$ is isometric to the round sphere.
\end{prop}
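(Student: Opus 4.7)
The plan is a Bochner-plus-weighted-Cauchy--Schwarz argument routed through an auxiliary Poisson potential. Since $\int_M \phi\,dV=0$ on the closed manifold $(M,g)$, the Fredholm alternative for $\Delta$ produces a smooth function $u$, unique modulo constants, solving $\Delta u=\phi$. Two integrations by parts then give the identity
\[
\int_M \phi^{2}\,dV \;=\; \int_M \phi\,\Delta u\,dV \;=\; -\int_M \langle\nabla u,\nabla\phi\rangle\,dV.
\]

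Next I would exploit positivity of $Ric$ via a pointwise weighted Cauchy--Schwarz: in a local orthonormal frame diagonalizing $Ric$ one reads off
\[
\big|\langle\nabla u,\nabla\phi\rangle\big|^{2} \;\le\; Ric(\nabla u,\nabla u)\cdot (Ric^{-1})^{ij}\nabla_i\phi\,\nabla_j\phi
\]
pointwise. Integrating and applying the $L^{2}$ Cauchy--Schwarz inequality then yields
\[
\left(\int_M \phi^{2}\,dV\right)^{\!2}\le\left(\int_M Ric(\nabla u,\nabla u)\,dV\right)\left(\int_M (Ric^{-1})^{ij}\nabla_i\phi\,\nabla_j\phi\,dV\right).
\]

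The remaining ingredient is the integrated Bochner identity. Starting from $\tfrac12\Delta|\nabla u|^{2}=|\nabla^{2}u|^{2}+\langle\nabla u,\nabla\Delta u\rangle+Ric(\nabla u,\nabla u)$, integration on $M$ together with $\Delta u=\phi$ produces the identity $\int_M\phi^{2}\,dV=\int_M |\nabla^{2}u|^{2}\,dV+\int_M Ric(\nabla u,\nabla u)\,dV$, and the trace Cauchy--Schwarz $(\Delta u)^{2}\le n|\nabla^{2}u|^{2}$ then gives
\[
\int_M Ric(\nabla u,\nabla u)\,dV\;\le\;\frac{n-1}{n}\int_M \phi^{2}\,dV.
\]
Feeding this into the previous squared inequality and cancelling $\int_M\phi^{2}\,dV$ (assuming $\phi\not\equiv0$) delivers the desired bound $\int_M(Ric^{-1})^{ij}\nabla_i\phi\,\nabla_j\phi\,dV\ge\tfrac{n}{n-1}\int_M\phi^{2}\,dV$.

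For the equality case I would trace equalities through all three Cauchy--Schwarz steps: equality in the trace version $(\Delta u)^{2}\le n|\nabla^{2}u|^{2}$ forces $\nabla^{2}u=(\phi/n)g$ pointwise, so that the Hessian of $u$ is a scalar multiple of the metric. The hard part, which I expect to be the main obstacle, is then the rigidity conclusion: one must invoke a classical Tashiro--Obata-type theorem asserting that on a compact Riemannian manifold with $Ric>0$ the existence of a non-constant smooth function whose Hessian is proportional to the metric forces $(M,g)$ to be isometric to the round sphere $S^{n}$. Matching the equality cases of both Cauchy--Schwarz inequalities cleanly to the hypotheses of the rigidity theorem (and checking that $\phi\not\equiv0$ indeed yields a non-constant $u$) is the most delicate step.
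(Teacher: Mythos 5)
Your proof of the inequality itself is correct, and it is essentially the argument of Andrews as presented in the cited source \cite{An} (the paper gives no proof of this Proposition). There the inequality is obtained by expanding the nonnegative quantity $\int_M (Ric^{-1})^{ij}(\nabla_i\phi+R_{ik}\nabla_k u)(\nabla_j\phi+R_{jl}\nabla_l u)\,dV$ with $\Delta u=\tfrac{n}{n-1}\phi$ and then using exactly your integrated Bochner identity together with $(\Delta u)^2\le n|\nabla^2u|^2$; your two Cauchy--Schwarz steps are an equivalent repackaging of that completed square, so the route is the same and the inequality part is sound.

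The genuine gap is in the equality case, precisely where you anticipated trouble. The rigidity statement you invoke --- compact, $Ric>0$, and a non-constant $u$ with $\nabla^2u$ pointwise proportional to $g$ forces the round sphere --- is false. Any rotationally symmetric metric $g=dr^2+f(r)^2g_{S^{n-1}}$ on $S^n$ carries the non-constant function $u(r)=\int_0^r f(s)\,ds$, which satisfies $\nabla^2u=f'(r)\,g$, and such metrics can have $Ric>0$ without being round; Tashiro's classification of concircular functions only yields a warped product over an interval. Obata's theorem needs the much stronger pointwise identity $\nabla^2u=-u\,g$ (equivalently that $u$ is a first eigenfunction attaining the Lichnerowicz bound), and that does not follow from your equality analysis: all you extract is $\nabla^2u=\tfrac{\Delta u}{n}g$, i.e.\ that $\nabla u$ is a nontrivial conformal gradient field. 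Worse, one can check that on such a non-round warped product the choice $\phi=c\,f'(r)$ passes through every one of your equality conditions (both Cauchy--Schwarz steps and the trace step), so no amount of tracing equalities in this argument can produce roundness from the hypotheses $Ric>0$ and $\int_M\phi\,dV=0$ alone. To salvage the equality discussion one must either weaken the conclusion to ``$\nabla u$ is a nontrivial conformal gradient field (so $(M,g)$ is a rotationally symmetric warped product)'' or import the extra hypotheses present where the Proposition is actually used (in Corollary \ref{coran} the manifold is in addition locally conformally flat with $A\in\Gamma_k^+$, and conformal rigidity results such as \cite{GVW} then apply).
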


\begin{cor}\label{coran}
Let $(M^{n},g)$ be a closed Riemannian manifold such that $A_g \in \Gamma^+_k$ with $n=2k$. Given $\phi \in C^\infty(M)$ and let $\underline{\phi}=V_g^{-1}\int_M \phi\; dV_g$, one has
\begin{equation}\label{eqp}
\int_M \dfrac{T_{k-1}(A_g)^{ij}}{\sigma_k(A_g)}\nabla_i\phi\nabla_j\phi \;dV_g \ge n \int_M (\phi-\underline{\phi})^2 dV_g.
\end{equation}
with equality if and only if $\phi$ is a constant or  $(M^{n},g)$ is isometric to the round sphere.
\end{cor}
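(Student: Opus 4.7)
The plan is to derive Corollary~\ref{coran} from the Andrews' Poincar\'e inequality stated just above by first establishing the pointwise matrix inequality
\begin{equation*}
\frac{T_{k-1}(A_g)}{\sigma_k(A_g)} \;\ge\; (n-1)\, Ric(g)^{-1}
\end{equation*}
as quadratic forms. The prefactor $(n-1)$ is calibrated so that, chained with Andrews' constant $n/(n-1)$, it produces the sharp Poincar\'e constant $n$.

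For the pointwise inequality, I would use the identity $Ric = (n-2)A + \sigma_1(A)\,g$, which is immediate from the definition of the Schouten tensor and implies that $A$, $Ric$, and $T_{k-1}(A)$ are mutually commuting and simultaneously diagonalizable. Writing $\lambda=(\lambda_1,\dots,\lambda_n)$ for the eigenvalues of $g^{-1}A_g$ and clearing denominators, the desired inequality $T_{k-1}(A)\cdot Ric \ge (n-1)\sigma_k(A)\,g$ becomes, for each $i$,
\begin{equation*}
\sigma_{k-1}(\lambda|i)\bigl((n-2)\lambda_i + \sigma_1(\lambda)\bigr) \;\ge\; (n-1)\sigma_k(\lambda).
\end{equation*}
Substituting $\sigma_1(\lambda)=\lambda_i+\sigma_1(\lambda|i)$ and $\sigma_k(\lambda)=\sigma_k(\lambda|i)+\lambda_i\sigma_{k-1}(\lambda|i)$, the $\lambda_i$-terms cancel completely and the claim collapses to
\begin{equation*}
\sigma_1(\lambda|i)\,\sigma_{k-1}(\lambda|i)\;\ge\;(n-1)\,\sigma_k(\lambda|i).
\end{equation*}
Since $\lambda\in\Gamma^+_k$ forces $\lambda|i\in\Gamma^+_{k-1}(\mathbb{R}^{n-1})$, the left side is positive; if $\sigma_k(\lambda|i)\le 0$ the estimate is trivial, and otherwise $\lambda|i\in\Gamma^+_k(\mathbb{R}^{n-1})$ and Proposition~\ref{NMI} applied on $\mathbb{R}^{n-1}$ with parameters $(k,k-1,1,0)$ gives $\sigma_1\sigma_{k-1}\ge\frac{k(n-1)}{n-k}\sigma_k$, whose prefactor equals $n-1$ precisely when $n=2k$.

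As a byproduct the matrix inequality forces $(n-2)\lambda_i+\sigma_1(\lambda)>0$ pointwise on $M$, so $Ric>0$ and the Andrews' inequality applies to $\phi-\underline\phi$ (which has zero mean). Chaining
\begin{equation*}
\int_M \frac{T_{k-1}(A_g)^{ij}}{\sigma_k(A_g)}\nabla_i\phi\,\nabla_j\phi\; dV_g \;\ge\; (n-1)\int_M Ric^{-1}(\nabla\phi,\nabla\phi)\, dV_g \;\ge\; n\int_M (\phi-\underline\phi)^2\, dV_g
\end{equation*}
proves \eqref{eqp}. For equality, Andrews' forces either $\phi\equiv\underline\phi$ or $(M,g)$ isometric to the round sphere; on $S^n$ with $A=\tfrac12 g$ one checks directly that $T_{k-1}(A)/\sigma_k(A)=g^{-1}$ while $Ric^{-1}=g^{-1}/(n-1)$, so the pointwise step is saturated everywhere, compatibly with the equality case. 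The main technical hurdle is the algebraic reduction in paragraph two: both the complete cancellation of $\lambda_i$-terms and the dimensional identity $\tfrac{k(n-1)}{n-k}=n-1$ at $n=2k$ are what ultimately enforce the sharp constant $n$ — away from the borderline dimension this chain is lossy.
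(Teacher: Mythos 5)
Your proposal is correct and follows essentially the same route as the paper: reduce to the pointwise matrix inequality $T_{k-1}(A_g)/\sigma_k(A_g)\ge (n-1)Ric^{-1}$ via Andrews' Poincar\'e inequality, diagonalize using $Ric=(n-2)A+\sigma_1(A)g$, and conclude from Newton--MacLaurin on $\mathbb{R}^{n-1}$, with the constant $k(n-1)/(n-k)=n-1$ holding exactly at $n=2k$. The small additions you make (handling $\sigma_k(\lambda|i)\le 0$ explicitly, and extracting $Ric>0$ as a byproduct rather than invoking the Guan--Viaclovsky--Wang result quoted in the remark) are sound and make the argument slightly more self-contained, but do not change the method.
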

\begin{proof}
We assume that $\int_M \phi\; dV_g=0$. By Andrew's Poincar$\acute{e}$ inequality, to show the claim \eqref{eqp} it suffices to show
\begin{equation}
\int_M \dfrac{T_{k-1}(A_g)^{ij}}{\sigma_k(A_g)}\nabla_i\phi\nabla_j\phi \;dV_g \ge (n-1)\int_M (Ric^{-1})^{ij}\nabla_i\phi\nabla_j\phi \;dV_g.
\end{equation}
That is to show the matrix inequality
\begin{equation}
\dfrac{T_{k-1}(A_g)}{\sigma_k(A_g)}\ge (n-1)Ric^{-1}.
\end{equation}
Since $Ric$ and $T_{k-1}(A)$ commute, we only need to show that
\begin{equation}
T_{k-1}(A)\circ Ric \ge (n-1)\sigma_k(A).
\end{equation}
Note that $Ric=(n-2)A+\sigma_1(g^{-1}A)g$, we choose coordinate at a point $p \in M$ such that $A$ is diagonal, then $Ric$ and $T_{k-1}$ is also diagonal. Assume the eigenvalues of $A$ are $\lambda_i, i=1,...,n$. Using Proposition \ref{prop3.1} and Proposition \ref{NMI}, we compute
\begin{align*}
T_{k-1}\circ Ric (\lambda) &= (n-2)\sigma_{k-1}(\lambda |i)\lambda_i+\sigma_{k-1}(\lambda |i)\sigma_1(\lambda)\\
& =(n-1)\sigma_{k-1}(\lambda |i)\lambda_i+\sigma_{k-1}(\lambda |i)\sigma_1(\lambda|i)\\
& \ge (n-1)\sigma_{k-1}(\lambda |i)\lambda_i+\dfrac{k(n-1)}{n-k}\sigma_{k}(\lambda |i)\\
& =(n-1)\sigma_k(\lambda).
\end{align*}
\end{proof}

\begin{rmk}We should mention that Guan, Viaclovsky and Wang \cite{GVW} proved that when $2k\geq n$, $Ric$ is positive definite if $g\in \Gamma_k^{+}$. In particular, they proved that if $M^n$ is locally conformally flat and $g\in \Gamma_k^{+}$ for $2k\geq n$, then $M^n$ is conformal to the round sphere $S^n$.
\end{rmk}

Set $\sigma=\int_M \sigma_k(g_u^{-1}A_u)\; dV_u$, and $\overline{\sigma}=V_u^{-1}\sigma$. Let $M^{2k}$ be locally conformally flat. Brendle and Viaclovsky \cite{BV04} introduced a functional which integrates the 1-form
\[
\alpha_g(v)=\int_M v(\bar\sigma-\sigma_k(A_g))dV_g
\]
Define the $F$-functional as $dF=\alpha$.  When $M^n$ is locally conformally flat and $2k=n$, we can get
\begin{prop}
The functional $F$ is formally geodesically convex.
\end{prop}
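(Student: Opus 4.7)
The plan is to compute $\frac{d}{dt}F(u)$ and $\frac{d^2}{dt^2}F(u)$ directly along a path $u(t)$, and then specialize to a geodesic to show that the second derivative is nonnegative. By the defining relation $dF=\alpha$, the first derivative is simply
\[
\frac{d}{dt}F(u) = \alpha_{u}(u_t) = \int_M u_t(\bar\sigma - \sigma_k(A_u))\,dV_u.
\]

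For the second derivative, I would differentiate the measure $(\bar\sigma - \sigma_k(A_u))\,dV_u$ piece by piece. Two facts do the heavy lifting in dimension $n=2k$: first, formula \eqref{par} loses its $(n-2k)u_t$ term, so $\tfrac{\partial}{\partial t}[\sigma_k(A_u)\,dV_u] = \langle T_{k-1}(A_u),\nabla_u^2 u_t\rangle_{g_u}\,dV_u$; second, integrating this over $M$ and using that $T_{k-1}(A_u)$ is divergence free in the locally conformally flat setting (Lemma~\ref{lecon}) gives $\frac{d}{dt}\sigma = 0$, so $\sigma$ is conserved along the path. Consequently $\bar\sigma_t = n\bar\sigma\,\overline{u_t}$, where $\overline{u_t}=V_u^{-1}\int_M u_t\,dV_u$. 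Combined with $\frac{d}{dt}dV_u = -n u_t\,dV_u$, this gives
\[
\frac{d}{dt}[\bar\sigma\,dV_u] = n\bar\sigma(\overline{u_t}-u_t)\,dV_u.
\]
Integrating by parts once more against $u_t$ (using divergence-freeness again) yields
\[
\int_M u_t\langle T_{k-1}(A_u),\nabla_u^2 u_t\rangle_{g_u}\,dV_u = -\int_M \langle T_{k-1}(A_u),\nabla u_t\otimes\nabla u_t\rangle_{g_u}\,dV_u.
\]
Assembling these pieces,
\[
\frac{d^2}{dt^2}F(u) = \int_M u_{tt}(\bar\sigma - \sigma_k(A_u))\,dV_u - n\bar\sigma\!\int_M(u_t-\overline{u_t})^2 dV_u + \int_M \langle T_{k-1}(A_u),\nabla u_t\otimes\nabla u_t\rangle_{g_u} dV_u.
\]

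Now I would impose the geodesic equation \eqref{es}: $u_{tt}\sigma_k(A_u)=\langle T_{k-1}(A_u),\nabla u_t\otimes\nabla u_t\rangle$. Substituting $u_{tt} = \langle T_{k-1}(A_u),\nabla u_t\otimes\nabla u_t\rangle/\sigma_k(A_u)$ into the first term above, the $-u_{tt}\sigma_k(A_u)\,dV_u$ contribution exactly cancels the third term. What remains is
\[
\frac{d^2}{dt^2}F(u) = \bar\sigma\left(\int_M \frac{\langle T_{k-1}(A_u),\nabla u_t\otimes\nabla u_t\rangle_{g_u}}{\sigma_k(A_u)}\,dV_u - n\!\int_M(u_t - \overline{u_t})^2\,dV_u\right),
\]
and nonnegativity is precisely the conclusion of Corollary~\ref{coran} applied with $\phi = u_t$ and background metric $g_u$ (noting that $A_{g_u}\in\Gamma_k^+$, which is exactly what makes the hypothesis applicable).

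The main obstacle I anticipate is keeping the integration by parts honest: two separate applications are needed, both of which rely on the divergence-free property of $T_{k-1}(A_u)$, and hence on the locally conformally flat hypothesis via Lemma~\ref{lecon}. The second small subtlety is tracking the contribution of $\bar\sigma_t$, which is nontrivial even though $\sigma$ itself is conserved, because $V_u$ is not. Once these two accounting steps are executed carefully, the cancellation of the $\sigma_k(A_u)$ term against the geodesic equation is automatic, and convexity reduces cleanly to Corollary~\ref{coran}.
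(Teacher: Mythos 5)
Your proposal is correct and follows essentially the same route as the paper: differentiate $\alpha_u(u_t)$ once more, use $\frac{d}{dt}[\sigma_k dV_u]=\langle T_{k-1},\nabla^2_u u_t\rangle dV_u$ (since $n=2k$ kills the zeroth-order term), exploit divergence-freeness of $T_{k-1}$ both to conserve $\sigma$ and to integrate by parts against $u_t$, substitute the geodesic equation to cancel $\int u_{tt}\sigma_k\,dV_u$ against $\int\langle T_{k-1},\nabla u_t\otimes\nabla u_t\rangle\,dV_u$, and close with Corollary~\ref{coran}. Your intermediate identity $\bar\sigma_t=n\bar\sigma\,\overline{u_t}$ and the grouping into a single $\bar\sigma(\cdots)$ factor are merely a slightly different packaging of the same algebra the paper does in display \eqref{f-convexity}.
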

\begin{proof}It follows from \cite{BV04} that, for a path of conformal metrics $u=u(t)$,
\begin{equation}
\dfrac{d}{dt}F[u]=\int_M u_t[-\sigma_k(g_u^{-1}A_u)+\overline{\sigma}] \; dV_u.
\end{equation}
Differentiating again along the geodesic path we have
\begin{equation}\label{f-convexity}
\begin{split}
\dfrac{d^2}{dt^2}F[u]=&\dfrac{d}{dt} \int_M u_t[-\sigma_k(g_u^{-1}A_u)+\overline{\sigma}] \; dV_u\\
=&-\int_M [u_{tt}\sigma_k(g_u^{-1}A_u)+ u_t\langle T_{k-1}(g_u^{-1}A_u), \nabla^2_u u_t \rangle_{g_u}]\;dV_u\\
&+\sigma\int_M [u_{tt}V_u^{-1}+V_u^{-2}u_t(\int_M nu_t\;dV_u)-nV_u^{-1}u_t^2]\;dV_u\\
=&-\int_M [u_{tt}\sigma_k(g_u^{-1}A_u)-\langle T_{k-1}(g_u^{-1}A_u), \nabla u_t\otimes \nabla u_t \rangle_{g_u}]\;dV_u\\
&+ \sigma V_u^{-1} \left[\int_M\dfrac{\langle T_{k-1}(g_u^{-1}A_u), \nabla u_t\otimes \nabla u_t \rangle_{g_u}}{\sigma_k(g_u^{-1}A_u)}-n(u_t-\underline{u_t})^2\right]\;dV_u\\
\ge & 0,
\end{split}
\end{equation}
where the last line follows from Corollary \ref{coran}.
\end{proof}

We have proved that the Dirichlet problem
\[
u_{tt}\sigma_k(A_u)-\langle T_{k-1}(A_u),  \nabla u_t\otimes \nabla u_t\rangle=s
\]
with the boundary condition $u(0)=u_0, u(1)=u_1$ has a unique smooth solution $u^s$ with uniform $C^{1, 1}$ estimates for $0<s\leq 1$. When $s\rightarrow 0$, $u^s$ converges uniformly in $C^{1, \alpha}$ to the unique $C^{1, 1}$ solution of the geodesic equation
\[
u_{tt}\sigma_k(A_u)-\langle T_{k-1}(A_u),  \nabla u_t\otimes \nabla u_t\rangle=0.
\]
With this approximation, we have the following,
\begin{lemma}
The functional $F$ is convex along the $C^{1, 1}$ geodesic and $F$ takes its minimum in $\cC_k^+$ at any smooth critical point with $\sigma_k(A_u)=const$.
\end{lemma}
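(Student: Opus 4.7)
The plan is to first establish convexity of $F$ along the $C^{1,1}$ geodesic by approximating with the smooth solutions of the non-degenerate equations, and then to use this convexity together with the vanishing of the first variation at a smooth critical point to conclude that such a critical point minimizes $F$.

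\textbf{Step 1: Convexity along the $C^{1,1}$ geodesic.} Let $u(t)$ be the $C^{1,1}$ geodesic joining $u_0, u_1 \in \cC_k^+$, and for each $s>0$ let $u^s$ be the smooth solution of
\[
u^s_{tt}\sigma_k(A_{u^s})-\langle T_{k-1}(A_{u^s}), \nabla u^s_t\otimes \nabla u^s_t\rangle = s
\]
with the same boundary data, furnished by Theorem~\ref{thmm}; the proof of Theorem~\ref{thmm2} gives $u^s \to u$ in $C^{1,\alpha}$ uniformly on $M\times[0,1]$ with a uniform $C^{1,1}$ bound. Inserting $u^s$ into identity \eqref{f-convexity}, the first bracketed integral equals $s V_{u^s}$ while the second integral is nonnegative by Corollary~\ref{coran}, so
\[
\frac{d^2}{dt^2} F[u^s(t)] \geq -s V_{u^s(t)} \geq -Cs,
\]
with $C$ uniform in $s$. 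Equivalently, $F[u^s(t)]+\tfrac{Cs}{2}t^2$ is convex in $t$. Letting $s\to 0$, provided $F[u^s(t)] \to F[u(t)]$ pointwise in $t$, the limit $F[u(t)]$ is midpoint-convex, hence convex by continuity in $t$. This required pointwise convergence, and the corresponding $L^1$-continuity of the first-variation integrand used in Step~2, rests on expressing $F$ in a form continuous under $C^{1,\alpha}$ convergence on $C^{1,1}$-bounded sets: integrating by parts via the divergence-free property of $T_{k-1}(g^{-1}A)$ (Lemma~\ref{lecon}) recasts $F$ as an integral involving only first derivatives of $u$ together with algebraic combinations of the Schouten-type tensors built from $A_u$.

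\textbf{Step 2: Smooth critical points are minimizers.} Let $u \in \cC_k^+$ be smooth with $\sigma_k(A_u)=\bar\sigma$ constant, and let $v \in \cC_k^+$ be arbitrary. Let $w(t)$ be the $C^{1,1}$ geodesic with $w(0)=u$, $w(1)=v$ provided by Theorem~\ref{thmm2}. By Step~1, $t\mapsto F[w(t)]$ is convex on $[0,1]$, so its right derivative at $t=0$ exists. Writing
\[
\frac{F[w(t)]-F[w(0)]}{t} = \frac{1}{t}\int_0^t\!\int_M w_\tau\bigl(\bar\sigma(w(\tau))-\sigma_k(A_{w(\tau)})\bigr)\, dV_{w(\tau)}\, d\tau,
\]
the integrand at $\tau=0$ reduces to $w_t(0,\cdot)\cdot 0 = 0$, and the $L^1$-continuity of the integrand at $\tau=0$, secured by the integration-by-parts reformulation above, shows that this right derivative vanishes. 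Convexity then forces
\[
F[v]-F[u] \;=\; F[w(1)]-F[w(0)] \;\geq\; 0,
\]
so $u$ minimizes $F$ on $\cC_k^+$.

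The central obstacle is the continuity $F[u^s(t)] \to F[u(t)]$ and its companion $L^1$-continuity at $\tau=0$ in Step~2. Only $C^{1,\alpha}$ convergence with a uniform $C^{1,1}$ bound is available, while the raw expression for $F$ involves $\sigma_k(A_u)$, a degree-$k$ polynomial in the second derivatives of $u$ that is not continuous under mere weak-$\ast$ $L^\infty$ convergence of those derivatives. The resolution is exactly the divergence-free identity for $T_{k-1}(g^{-1}A)$ that holds under our hypotheses ($k=2$, or locally conformally flat with $n=2k$), which allows $F$ to be rewritten after integration by parts in a form involving only first derivatives of $u$; such an expression is continuous for the available convergence, and once this reformulation is in place both conclusions follow by standard convexity arguments.
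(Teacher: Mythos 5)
Your overall strategy (approximate the $C^{1,1}$ geodesic by the smooth $u^s$ with $F_k(u^s)=s$, use \eqref{f-convexity} and Corollary~\ref{coran} to get $\tfrac{d^2}{dt^2}F[u^s(t)] \geq -Cs$, then pass to the limit) is exactly the approximation argument the paper indicates (it refers to the $n=4=2k$ case in \cite{He18} and ``skips the details''). However, the mechanism you invoke for the two continuity claims is not right, and one of the two needs a different argument altogether.

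First, the assertion that integrating by parts via the divergence-free property of $T_{k-1}$ recasts $F$ as an integral ``involving only first derivatives of $u$'' is not correct for $k\geq 2$: $\sigma_k(A_u)$ is a degree-$k$ polynomial in $\nabla^2 u$, and one integration by parts drops the degree by at most one, leaving $k-1$ factors of $\nabla^2 u$. Fortunately you do not need this at all for Step~1. Since $\alpha$ is exact, for any two $C^{1,1}$ conformal factors $v,w$ one has $F[v]-F[w]=\int_0^1 \int_M (v-w)\bigl(\bar\sigma(\gamma_\theta)-\sigma_k(A_{\gamma_\theta})\bigr)\,dV_{\gamma_\theta}\,d\theta$ with $\gamma_\theta=w+\theta(v-w)$; the integrand is bounded by $\|v-w\|_{C^0}$ times a constant depending only on the $C^{1,1}$ bounds of $v,w$. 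Thus $|F[v]-F[w]|\leq C\|v-w\|_{C^0}$, and $F[u^s(t)]\to F[u(t)]$ follows directly from $C^0$ convergence plus the uniform $C^{1,1}$ bound. This gives the convexity of $t\mapsto F[u(t)]$.

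Second, Step~2 as written has a genuine gap. You want the right-derivative of $F[w(t)]$ at $t=0$ to vanish, and you argue this from ``$L^1$-continuity of the integrand at $\tau=0$.'' But this would require $\sigma_k(A_{w(\tau)})\to\sigma_k(A_u)$ in $L^1$ as $\tau\to 0$, i.e.\ strong convergence of $\nabla^2_x w(\tau)$ in $L^k$; a $C^{1,1}$ bound in $(t,x)$ and $C^{1,\alpha}$ convergence give only weak-$*$ $L^\infty$ convergence of $\nabla^2_x w(\tau)$, which does not pass through a degree-$k$ nonlinearity. The cleaner route, which also removes the reliance on Step~1, is to exploit the approximants directly: since $u^s(0)=u$ is a smooth critical point with $\sigma_k(A_u)=\bar\sigma$, one has $\tfrac{d}{dt}F[u^s(t)]\big|_{t=0}=\int_M u^s_t(0)(\bar\sigma-\sigma_k(A_u))\,dV_u=0$, and since $F[u^s(t)]+\tfrac{Cs}{2}t^2$ is convex in $t$ it lies above its tangent line at $t=0$, giving $F[u^s(t)]+\tfrac{Cs}{2}t^2\geq F[u]$ for all $t$. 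Setting $t=1$ and letting $s\to 0$ gives $F[v]\geq F[u]$. (Alternatively, passing $s\to 0$ for general $t$ via the $C^0$-continuity above yields $F[w(t)]\geq F[u]$ for all $t$, which shows the right-derivative at $t=0$ is $\geq 0$ without any $L^1$-continuity claim.)
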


Now suppose $u_0$ and $u_1$ are both smooth critical points of $F$ in $\cC_k^+$. Connecting $u_0$ and $u_1$ by the unique $C^{1, 1}$ geodesic, then we have the following
\begin{lemma}
Then either $u(t)=u_0+at$ or $(M^n, g_{u(t)})$ is conformally diffeomorphic to the round sphere $S^n$.
\end{lemma}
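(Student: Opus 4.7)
The plan is to connect $u_0$ and $u_1$ by the unique $C^{1,1}$ geodesic $u(t)$ produced by Theorem \ref{thmm2} and argue that $F$ must be constant along it, then read off the rigidity from the equality case of Corollary \ref{coran}.

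First I will establish that $\phi(t) := F[u(t)]$ is constant. By the previous Lemma, $\phi$ is convex on $[0,1]$ and $F$ attains its minimum over $\mathcal{C}_k^+$ at any smooth critical point with $\sigma_k(A_u)$ constant; since $u_0$ and $u_1$ are both such critical points, $\phi(0)=\phi(1)=\min_{\mathcal{C}_k^+}F$. A convex function whose endpoint values coincide with its infimum is constant, so $\phi\equiv\min F$.

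Next I will extract the vanishing of the second variation. Approximating $u$ by the smooth solutions $u^s$ of $F_k(u^s)=s$ (which converge to $u$ in $C^{1,\alpha}$ with uniform $C^{1,1}$ bounds), the formula \eqref{f-convexity} applied to $u^s$ gives
\[
\phi_s''(t) = -s\,V_{u^s}(t) + \sigma_s V_{u^s}^{-1}\!\!\int_M\!\!\left[\frac{\langle T_{k-1}(g_{u^s}^{-1}A_{u^s}),\nabla u^s_t\otimes\nabla u^s_t\rangle_{g_{u^s}}}{\sigma_k(g_{u^s}^{-1}A_{u^s})}-n(u^s_t-\underline{u^s_t})^2\right]dV_{u^s},
\]
where the bracketed integrand is nonnegative by Corollary \ref{coran}. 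Passing $s\to 0$ and using that $\phi''\equiv 0$ in the distributional sense (from Step 1), the limiting nonnegative quadratic form must vanish for a.e.\ $t$. By the equality case of Corollary \ref{coran}, at each such $t$ either \emph{(i)} $u_t(t,\cdot)$ is constant in $x$, or \emph{(ii)} $(M,g_{u(t)})$ is isometric to the round sphere. In alternative (ii), since $g_{u(t)}\in[g]$, the manifold $(M,[g])$ is conformally diffeomorphic to $S^{2k}$, and we are done.

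It remains to treat alternative (i). If $\nabla u_t(t,\cdot)=0$ for a.e.\ $t$, the Lipschitz regularity of $\nabla u_t$ (from $u\in C^{1,1}$) upgrades this to all $t\in[0,1]$, so $u(t,x)=u_0(x)+A(t)$ for some Lipschitz $A$ with $A(0)=0$. Since adding a $t$-dependent constant leaves the Schouten tensor unchanged, $A_{u(t)}=A_{u_0}$ and $\sigma_k(A_{u(t)})=\sigma_k(A_{u_0})>0$; the $L^\infty$ geodesic equation then reduces to $A''(t)\sigma_k(A_{u_0})=0$, forcing $A''\equiv 0$. Hence $A(t)=at$ and necessarily $a=u_1-u_0$ is a constant, giving $u(t)=u_0+at$.

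The main obstacle is the limit argument in Step 2: one must justify that the nonnegative quadratic form survives the passage $s\to 0$ despite the possible degeneracy $\sigma_k(A_u)\to 0$ on a set of positive measure. The convergence $u^s\to u$ in $C^{1,\alpha}$ handles $\nabla u_t$ and $u_t-\underline{u_t}$ cleanly, but $T_{k-1}(A_{u^s})/\sigma_k(A_{u^s})$ only converges weak-$^*$ in $L^\infty$ on the nondegenerate set, so one invokes Fatou together with $\phi''_s\ge 0$ after integrating against nonnegative test functions $\eta(t)$ and using $\int_0^1\eta\phi''=0$ to force the bracketed integrand to vanish a.e.\ in $t$.
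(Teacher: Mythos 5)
The paper offers no proof of this lemma (it points to [He18, Appendix] for a ``standard approximation argument''), so you are reconstructing the intended route, and your three-step scheme --- force $F$ to be constant along the geodesic, pass the second-variation formula \eqref{f-convexity} through the smooth approximations $u^s$, then invoke the equality case of Corollary~\ref{coran} --- is indeed that route. A small remark on Step 1: the cleaner justification that $\phi\equiv\mathrm{const}$ is that criticality of $u_0,u_1$ gives $\phi'(0)=\phi'(1)=0$ exactly (since $\sigma_k(A_{u_i})=\bar\sigma$), which together with convexity forces $\phi'\equiv 0$; your formulation also works once one notes $\phi(t)\ge\min_{\cC_k^+}F$ by approximating $u(t)$ by $u^s(t)\in\cC_k^+$. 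Step 3 is correct as written.

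The substantive issue is that Step 2, as sketched, does not close. From $\int_0^1 B_s\,dt = sV_g\to 0$ and $B_s\ge 0$ one does get $B_s(t)\to 0$ a.e.\ along a subsequence, but applying the equality case of Corollary~\ref{coran} at such a $t$ requires either applying it to the limit metric $g_{u(t)}$ --- which may satisfy only $A_{u(t)}\in\partial\Gamma_k^+$, outside the corollary's hypothesis --- or to $g_{u^s(t)}$ for finite $s$, where there is no equality, only convergence to equality, and the Poincar\'e gap underlying Corollary~\ref{coran} (equivalently in Andrews' inequality) is not quantitatively uniform as the metrics $g_{u^s(t)}$ degenerate. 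Concretely, substituting $F_k(u^s)=s$ turns the ratio in the bracket into $u^s_{tt}-s/\sigma_k(g^{-1}A_{u^s})$; the correction $s/\sigma_k(g^{-1}A_{u^s})$ is bounded above by $u^s_{tt}$ but need not vanish on the set where $\sigma_k(A_u)\to 0$, so one cannot simply identify the limiting bracket with $\int_M u_{tt}\,dV_u - n\int_M(u_t-\underline{u_t})^2\,dV_u$. The Fatou-type argument you describe only recovers the sign $\ge 0$, which is already known, not the vanishing. Resolving this requires additional input (for instance, a uniform Ricci lower bound for $g_{u^s(t)}$ via Guan--Viaclovsky--Wang so the Andrews gap passes to the limit), which is precisely the technical content the paper delegates to [He18]. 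You flag the obstacle honestly, but the sketch as written does not overcome it.
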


As a corollary, this proved the following
\begin{thm}\label{unique01}
Suppose $(M^{2k}, g)$ is locally conformally flat such that $g\in \Gamma_k^+$. Then any solution of $\sigma_k(A_u)=const$ in $\cC_k^+$ is conformally diffeomorphic to the round sphere $S^{2k}$.
\end{thm}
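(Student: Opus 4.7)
The plan is to deduce Theorem~\ref{unique01} as a direct consequence of the two lemmas stated immediately above it, combined with the Guan--Viaclovsky--Wang rigidity recalled in the Remark. The underlying work has already been done; assembling the theorem is a short corollary-style argument.

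First, I would invoke Guan--Viaclovsky--Wang: since $(M^{2k},g)$ is locally conformally flat and $g\in\Gamma_k^+$ with $2k=n$, $(M^{2k},g)$ is already conformally diffeomorphic to $(S^{2k},g_{\mathrm{round}})$. Hence there exists $u_0\in\cC_k^+$ such that $g_{u_0}=e^{-2u_0}g$ is (the pullback of) the round metric on $M^{2k}$, and in particular $\sigma_k(A_{u_0})\equiv\text{const}$. This makes $u_0$ a smooth critical point of the Brendle--Viaclovsky functional $F$ in $\cC_k^+$.

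Next, given any solution $u_1\in\cC_k^+$ of the $\sigma_k$-Yamabe problem $\sigma_k(A_{u_1})=\text{const}$, the function $u_1$ is likewise a smooth critical point of $F$. By Theorem~\ref{thmm2}, I would connect $u_0$ and $u_1$ by the unique $C^{1,1}$ geodesic $u(t)$ with $u(0)=u_0$, $u(1)=u_1$, obtained as the $s\to 0^+$ limit of the smooth solutions of the non-degenerate equation $F_k(\cdot)=s$.

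Applying the dichotomy Lemma stated just before Theorem~\ref{unique01} to this geodesic, there are two alternatives. In the trivial case $u(t)=u_0+at$, evaluation at $t=1$ gives $u_1=u_0+a$, so $g_{u_1}=e^{-2a}g_{u_0}$ is a constant rescaling of the round metric on $M^{2k}$; hence $(M^{2k},g_{u_1})$ is conformally diffeomorphic to $(S^{2k},g_{\mathrm{round}})$. In the non-trivial case, $(M^{2k},g_{u(t)})$ is conformally diffeomorphic to the round sphere for some (hence all) $t\in[0,1]$; in particular at $t=1$, $(M^{2k},g_{u_1})$ is conformally diffeomorphic to $(S^{2k},g_{\mathrm{round}})$. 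Either way the conclusion holds for $u_1$.

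The step which is genuinely delicate (and which I am using as a black box above) is the dichotomy Lemma itself. The hard part lies in pushing the formal convexity calculation \eqref{f-convexity} through to the $C^{1,1}$ geodesic: one must approximate $u(t)$ by the smooth solutions $u^s(t)$ of $F_k(\cdot)=s$ (which converge in $C^{1,\alpha}$ by the uniform $C^{1,1}$ bounds of Theorem~\ref{thmm}), use that both endpoints are minima of $F$ to force $F$ to be constant along the geodesic in the limit, and then extract rigidity. The equality case in Andrews' Poincar\'e inequality (Corollary~\ref{coran}) provides the needed dichotomy pointwise in $t$: either $u_t-\underline{u_t}\equiv 0$ (which integrates to $u(t)=u_0+at$) or $(M^{2k},g_{u(t)})$ is isometric to a round sphere. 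Handling the low regularity of $u$ and propagating the sphere conclusion from one $t$ to the endpoints is the technical core; once that is in place, Theorem~\ref{unique01} drops out as above.
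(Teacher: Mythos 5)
Your proposal matches the paper's approach: the paper explicitly defers this theorem's details to \cite{He18}, indicating only that it follows from the convexity of $F$ along the $C^{1,1}$ geodesic together with the dichotomy in the preceding lemma. You correctly assemble these pieces, including the use of Guan--Viaclovsky--Wang to supply the round-sphere critical point $u_0$ as one geodesic endpoint, and you accurately identify the $C^{1,1}$ approximation and the propagation of the equality case of Andrews' inequality as the technical core being black-boxed.
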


The proof of Theorem \ref{unique01} relies on the convexity of $F$ along the $C^{1, 1}$ geodesic. The proof is rather standard by an approximation argument and the details are almost identical to the case $n=4=2k$, as in \cite[Appendix]{He18}. We skip the details.

We shall mention that A.B. Li and Y. Y. Li \cite{LL03, LL05} proved the conformal invariance of a large class of fully nonlinear equations on $S^n$ by studying the Liouville type theorem.
Their classical results  include Theorem \ref{unique01} as a special case.

\end{document}